\newcommand{\R}{{\mathbb R}}
\newcommand{\entropy}{\mathrm{Ent}}
\newcommand{\Rn}{{\R^N}}
\newcommand{\dd}{\mathrm d}
\renewcommand{\ss}{\scriptstyle}
\def\sddots{\mathinner{\raise3pt\vbox{\hbox{$\ss .$}}
		\raise1.5pt\hbox{$\ss .$}\hbox{$\ss .$}}}
\let\hat\widehat
\theoremstyle{plain}
\newtheorem{thm}{Theorem}[section]
\newtheorem{lem}[thm]{Lemma}
\newtheorem{proposition}[thm]{Proposition}
\theoremstyle{definition}
\newtheorem{rem}[thm]{Remark}
\newtheorem{defn}[thm]{Definition}
\newcommand{\icol}[1]{
  \left(\begin{smallmatrix}#1\end{smallmatrix}\right)%
}
\newcommand{\change}[1]{\textcolor{blue}{#1}}
\begin{document}

\title{Characterizing and computing solutions to regularized semi-discrete optimal transport via an ordinary differential equation}

\author{Luca Nenna\thanks{Universit\'e Paris-Saclay, CNRS, Laboratoire de math\'ematiques d'Orsay, ParMA, Inria Saclay, 91405, Orsay, France. 
email: luca.nenna@universite-paris-saclay.fr},\thanks{Institut Universitaire de France, I.U.F.}\;
Daniyar Omarov \thanks{Department of Mathematical and Statistical Sciences, 632 CAB, University of Alberta, Edmonton, Alberta, Canada, T6G 2G1.
email: daniyar@ualberta.ca}
  and  Brendan Pass \thanks{Department of Mathematical and Statistical Sciences, 632 CAB, University of Alberta, Edmonton, Alberta, Canada, T6G 2G1.
email: pass@ualberta.ca}}
\maketitle


\vskip\baselineskip\noindent
\textit{Keywords.} Semi-discrete optimal transport, entropic regularization, ODE, convex analysis.\\
\textit{2020 Mathematics Subject Classification.}  Primary: 49Q22; Secondary: 49N15, 94A17, 49K40.

\begin{abstract}
This paper investigates the semi-discrete optimal transport (OT) problem with entropic regularization. We characterize the solution using a governing, well-posed ordinary differential equation (ODE).  This naturally yields an algorithm to solve the problem numerically, which we prove has desirable properties, notably including global strong convexity of a value function whose Hessian must be inverted in the numerical scheme. Extensive numerical experiments are conducted to validate our approach. We compare the solutions obtained using the ODE method with those derived from Newton’s method. Our results demonstrate that the proposed algorithm is competitive for problems involving the squared Euclidean distance and exhibits superior performance when applied to various powers of the Euclidean distance. In addition, it proves particularly effective in scenarios where the target points lie outside the support of the source measure.  Finally, we note that the ODE approach yields an estimate on the rate of convergence of the solution as the regularization parameter vanishes, for a generic cost function.
\end{abstract}



\section{Introduction}\label{sec:intro}

The optimal transport (OT) problem, which was first proposed by Monge in 1781 \cite{Monge1781a} and subsequently relaxed by Kantorovich during the 1940-s \cite{Kantorovich1942a, kantorovich1948a}, involves identifying the most efficient way to transfer mass from one probability measure to another, all while minimizing a specified cost function. This profound problem is intricately connected to many areas of mathematics, including partial differential equations \cite{jordan1998variational, otto2001geometry} and statistics \cite{ramdas, bigotcazellespapadakis, weedberthlet}, and its applications are vast, extending into fields such as economics \cite{galichon2018optimal}, fluid mechanics \cite{BenamouBrenier, brenier1997homogenized}, image processing \cite{papadakis2015optimal}, and machine learning \cite{torres2021survey}. In recent decades, advancements in computational techniques, notably entropic regularization \cite{Cuturi2013a,benamou2015iterative,galichon2022cupid,chizat2018scaling}, have facilitated the practical implementation of OT in large-scale scenarios as well as the numerical resolution of many variational problems involving optimal transport terms \cite{peyre2015entropic,cuturi2014fast,chizat2018scaling,benamou2019entropy,blanchet2018computation,de2024variational,barilla2021mean}. 


In this paper, we focus specifically on the semi-discrete optimal transport problem, that is 
\begin{equation}\label{eqn: unregularized OT}
   \max_{\gamma\in\Pi(\rho,\mu)}\int_{X \times Y} b(x,y)\dd \gamma\ , 
\end{equation}
where $b(x,y)$ is a cost function, 
the source measure $\rho(x)$ is absolutely continuous with respect to the Lebesgue measure, the target measure $\mu$ is supported on a finite set $Y$ and $\Pi(\rho,\mu)$ is the set of couplings having $\rho$ and $\mu$ as marginals. This particular variant has garnered increased interest recently due to its relevance in applications such as geometric optics \cite{decastro2014, DELEO2017123, doskolovich2019optimal} and mesh generation \cite{DU2002591, yang2018fast}.

Numerous studies have discussed various numerical algorithms relevant to this problem \cite{kitagawa, dieciomarov}; we refer the reader to \cite{ merigot:hal-02494446} and the references within for an overview on this topic. Motivated by some recent works in the discrete case \cite{nenna2022ode,hiew2024ordinary}, our focus here is on the entropic regularized version of the semi-discrete OT problem

\begin{equation}\label{eqn: regularized OT}
\max_{\gamma\in \Pi (\rho, \mu)} \int_{X\times Y}b(x,y)\dd\gamma -\varepsilon \entropy(\gamma \ |\ \rho \otimes \sigma)\ ,
\end{equation}
where $\sigma$ is the counting measure on $Y$ and $\entropy(\gamma\ |\ \rho\otimes\sigma)$ is the relative entropy with respect to the product measure.
In particular, we study the dual (semi-discrete) entropic problem
\begin{gather}
  \min_{\mathbf{v}\in\mathbb{R}^N} \int_{X}\varepsilon\log{\bigg{(}\sum_{k=1}^N e^{\frac{b(x,y_k) - v_k}{\varepsilon}}}\bigg{)}\dd\rho + \sum_{k=1}^N v_k\mu_k + \varepsilon \ , \ (\varepsilon>0)\ .
\end{gather}
We manage to equivalently  characterize the curve of solutions with respect to the regularization parameter $\varepsilon\mapsto v(\varepsilon)$  through a well-posed (even when the regularization parameter vanishes) ordinary differential equation.
This extends a result in the remarkable paper by Delalande \cite{delalande2022nearly} 
which showed that a given curve of solutions to the entropic problem satisfy an ODE; but not the converse, that any solution of the corresponding Cauchy problem \emph{must} be in fact be a curve of solutions to the entropic OT problem.  Let us also emphasize that our work here applies to general cost functions, whereas \cite{delalande2022nearly} focuses exclusively on the quadratic cost $b(x,y) =-|x-y|^2$.  
Moreover, we establish a uniform bound on the smallest eigenvalue of the Hessian of the dual functional when the curve $\varepsilon\mapsto v (\varepsilon)$ approaches the solution to the unregularized problem.  This bound seems to be a special feature of the semi-discrete setting, as we don't see a clear way to obtain similar estimates in fully discrete problems. It has important computational consequences, as it allows one to avoid numerical instabilities and therefore compute the solution to the fully unregularized problem \eqref{eqn: unregularized OT} by numerically solving the ODE characterizing solutions to \eqref{eqn: regularized OT} up to $\varepsilon =0$.  In addition, it allows us to estimate convergence rates of the solutions as $\varepsilon \rightarrow 0$, in the spirit of the results in \cite{altschuler2022asymptotics} and \cite{delalande2022nearly}, but for a generic cost function.

\paragraph{Overview of the paper}
The paper is organized as follows:  Section \ref{sec:theory} presents theoretical results associated with the problem statement, particularly highlighting the uniform boundedness of the Hessian matrix. Next, Section \ref{sec:ODE} establishes that  solutions can be characterized as solutions to an appropriate ODE. In Section \ref{sec:compEx}, we provide an extensive collection of computational examples for one-dimensional, two-dimensional, and three-dimensional examples, incorporating various cost functions. Additionally, we compare the solution derived from the ODE approach with the traditional Newton's method, observing that the proposed algorithm is competitive for problems involving the squared Euclidean distance but demonstrates superior performance when dealing with different powers of the Euclidean cost function and in scenarios where the target points are not contained within  the support of the source measure.

\section{Entropic optimal transport and the governing ODE}\label{sec:theory}
Consider a compact convex subset $ X \subset \mathbb{R}^n$ and a finite set $Y = \{y_1, y_2, \ldots, y_N\} \subset \mathbb{R}^n$ of $N$ points. Take two probability measures $\rho\in\mathcal P(X)$ and $\mu\in\mathcal P(Y)$ satisfying the following hypotheses:
\begin{itemize}
    \item[(H1)]  $\rho(x)$ is absolutely continuous with respect to the Lebesgue measure and bounded from above and below, that is $\exists$  $\overline{m},\underline{m}> 0$ such that  $0 < \underline{m} \leq \rho(x) \leq \overline{m} < \infty$;
    \item[(H2)] $\mu = \sum_{k=1}^N \mu_k \delta_{y_k}$ is a discrete probability measure on $Y$,  bounded from below by a positive constant, $\mu_k \geq \underline{\mu} > 0$, $\forall k$.

\end{itemize}
 Then, the entropic (semi-discrete) optimal transport (OT)  can be formulated as follows:
\begin{align}\label{RegProb1}
\max_{\gamma\in \Pi (\rho, \mu)} \int_{X\times Y}b(x,y)\dd\gamma -\varepsilon \entropy(\gamma \ |\ \rho \otimes \sigma)\ ,
\end{align}
where $\varepsilon\in [0, \change{\infty)}$ is a regularization parameter, $\Pi(\rho,\mu)$ is the set of probability measures on $X\times Y$ having $\rho$ and $\mu$ as marginals, $\sigma$ is the counting measure on $Y$, $b(x,y)$ is a cost function and $\entropy(\cdot|\rho \otimes \sigma)$ is the Boltzmann-Shannon relative entropy (or Kullback-Leibler divergence) w.r.t.\ the product measure $\rho \otimes \sigma$, defined for general probability measures $p,q$ as
\[
\entropy(p \,|\, q) = 
\begin{dcases*}
\displaystyle{\int_{\R^d} \eta \log(\eta)\, \dd q} & if $p = \eta q$,\\
+\infty & otherwise.
\end{dcases*}
\]
The fact that $q$ is a probability measure ensures that $\entropy(p \,|\, q) \geq 0$.
It is easy to show, see for instance \cite{bercu2023stochastic,genevay2016stochastic}, that \eqref{RegProb1} admits the following dual formulation :
\begin{gather}\label{Dual1}
  \min_{\mathbf{v}\in\mathbb{R}^N} \int_{X}\varepsilon\log{\bigg{(}\sum_{k=1}^N e^{\frac{b(x,y_k) - v_k}{\varepsilon}}}\bigg{)}\dd\rho + \sum_{k=1}^N v_k\mu_k + \varepsilon \ , \ \varepsilon>0\ .
\end{gather}
\begin{rem}
    Notice that the term $\varepsilon\log{\bigg{(}\sum_{k=1}^N e^{\frac{b(x,y_k) - v_k}{\varepsilon}}}\bigg{)}$ in \eqref{Dual1} is the so called b-soft-transform; roughly speaking this term converges to the usual b-transform of OT theory $v^{b}(x):=\sup_y b(x,y)-v(y)$  as $\varepsilon\to 0$.
\end{rem}
Before going into details let us reformulate problems \eqref{RegProb1} \eqref{Dual1} in a more convenient way:  consider the following change of variables: $t = 1-\varepsilon$ with $t\in[0,1]$ and  $b(x,y_i) = -tc(x,y_i)$. 
Then the primal problem \eqref{RegProb1} can be re-written as
\begin{align}\label{RegProb2}
    \boxed{\min_{\gamma\in \Pi (\rho,\mu)} t\int_{X\times Y}c(x,y)\dd\gamma + (1-t)\entropy(\gamma \ |\ \rho \otimes \sigma), \ t\in [0,1].\ }
\end{align}
Notice that \eqref{RegProb2} can be now seen as an interpolation between the case where the entropy is dominant, at $t=0$, for which the solution is explicit, $\gamma=\rho\otimes\sigma$, and the original unregularized semi-discrete optimal transport problem, at $t=1$.
It is straightforward to then derive the new dual problem which takes the form:
\begin{gather}\label{Dual2}
    \boxed{\min_{\mathbf{\psi}\in\mathbb{R}^N} \Phi(\mathbf{\psi},t):=\int_{X}(1-t)\log{\bigg{[}\sum_{k=1}^N e^{\frac{\psi_k - tc(x,y_k)}{1-t}}}\bigg{]}\dd\rho(x) - \sum_{k=1}^N\psi_k\mu_k - (1-t)\ .}
\end{gather}

From now we will focus on the regularized problems \eqref{RegProb2}, and \eqref{Dual2} and their unregularized counterparts 
\begin{equation*}
\min_{\gamma\in \Pi (\rho,\mu)}\int_X c(x,y)\dd\gamma(x,y),\quad 
\min_{\mathbf{\psi}\in\mathbb{R}^N} -\sum_{k=1}^N \int_{\text{Lag}_i(\psi)}(c(x,y)-\psi_i)\dd\rho-\sum_{k=1}^N\psi_k\mu_k\ , 
\end{equation*}
where $\text{Lag}_i(\psi)$ is the $i-$th Laguerre cell defined below.
 Moreover, we will make the following  assumption on the cost
\begin{itemize}
    \item[(H3)] the cost $c(x,y)$ is of class $\mathcal C^1$ and satisfies the twist condition, that is for all $x_0\in X$  $y\mapsto\nabla_x c(x_0,y)$ is injective.
\end{itemize}

\begin{defn}[Laguerre cell]
    Given a vector $\psi\in\mathbb{R}^N$, the Laguerre cell corresponding to a target point $y_i$ is:
    \begin{gather*}
        \text{Lag}_i(\psi) := \{x\in X \ | \ c(x,y_i) - \psi_i \leq c(x,y_k) - \psi_k,\ \forall k\neq i\}\ .
    \end{gather*}
    Furthermore, the measure of the Laguerre cell with respect to the density $\rho(x)$ supported on $X$ is given by $\rho(\text{Lag}_i(\psi)):=\int_{ \text{Lag}_i(\psi)}\dd\rho(x)$.
\end{defn}

\subsection{Preliminary results on semi-discrete entropic optimal transport and convexity of the objective function}


Before introducing the governing ODE in order to characterise \eqref{Dual2}, let us state some preliminary results concerning the entropic Kantorovich functional $\Phi(\psi,t)$. We will in particular focus on some convexity properties of the entropic Kantorovich functional in two different regimes: (1) the entropy term is dominant, that is the case when $t\in[0,t^*]$,  for some $t^*$ to be chosen; (2) the optimal transport term becomes stronger and the entropic solution $\psi(t)$ is close to the solution $\psi(1)$ of the unregularized problem $t\in[t^*,1]$. Notice that the existence of such a $t^*$ comes from the fact that we can easily show that $\psi(t)$ converges to $\psi(1)$ and so there exists a $t^*$ such that for all $t\geq t^*$ we have $\|\psi(t)-\psi(1)\|\leq \delta$ for some $\delta>0$, which will be selected later on to ensure appropriate bounds on the Hessian of $\Phi$.

Let us start by stating some general results and then see how we can improve them in the case where $t\in[t^*,1]$.

It is quite easy to show that a solution $\psi(t)$ to \eqref{Dual2} is actually bounded (notice that $X$ is compact and $c$ is a continuous cost function, 
 for more details see \cite{genevay2019entropy}[Chapter 3]) by $2\|c\|_\infty$ and it is well known that we can choose a solution $\psi(t)$ such that $\sum_{i=1}^N \psi_i =0$; it is therefore enough to show strong convexity of the functional $\Phi$ on the set 

\begin{equation}\label{eqn: domain}
U:=\{\psi\in\Rn\;|\;\|\psi\|\leq 2\|c\|_\infty,\;\psi\perp \mathbf{1}\}.
\end{equation}
In particular we obtain the following results{\color{red}.}
\begin{lem}\label{lem1}
    Let $\psi\in \mathbb{R}^N$ be a vector such that $\sum_{i=1}^N\psi_i = 0$. Also, let $\widehat{\mu}\in \mathbb{R}^N$ be a discrete probability vector with a lower bound $\underline{\widehat{\mu}}$, $\widehat{\mu}_i\geq \underline{\widehat{\mu}}>0$, $\forall i$. Denote by $\mathrm{Var}_{\widehat{\mu}}(\psi):=$ the variance of $\psi$ with respect to $\widehat{\mu}$. Then, it follows that $\mathrm{Var}_{\widehat{\mu}}(\psi)\geq \underline{\widehat{\mu}}||\psi||_2^2$.
\end{lem}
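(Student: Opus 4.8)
The plan is to reduce the statement to one algebraic identity and then cash in the hypothesis $\sum_{i=1}^N \psi_i = 0$. Write $m := \sum_{i=1}^N \widehat{\mu}_i \psi_i$ for the $\widehat{\mu}$-weighted mean of $\psi$, so that by definition $\mathrm{Var}_{\widehat{\mu}}(\psi) = \sum_{i=1}^N \widehat{\mu}_i \psi_i^2 - m^2$. First I would record the standard "variance $=$ second moment about the mean" identity,
\[
\mathrm{Var}_{\widehat{\mu}}(\psi) = \sum_{i=1}^N \widehat{\mu}_i \psi_i^2 - m^2 = \sum_{i=1}^N \widehat{\mu}_i (\psi_i - m)^2,
\]
which follows by expanding $(\psi_i-m)^2$ and using $\sum_{i=1}^N \widehat{\mu}_i = 1$.

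Next, since each $(\psi_i - m)^2 \geq 0$ and $\widehat{\mu}_i \geq \underline{\widehat{\mu}} > 0$ for every $i$, I would bound the sum termwise to pull out the weight floor:
\[
\mathrm{Var}_{\widehat{\mu}}(\psi) = \sum_{i=1}^N \widehat{\mu}_i (\psi_i - m)^2 \;\geq\; \underline{\widehat{\mu}} \sum_{i=1}^N (\psi_i - m)^2 .
\]
Finally I would expand the remaining (unweighted) sum and invoke $\sum_{i=1}^N \psi_i = 0$:
\[
\sum_{i=1}^N (\psi_i - m)^2 = \sum_{i=1}^N \psi_i^2 - 2m \sum_{i=1}^N \psi_i + N m^2 = \|\psi\|_2^2 + N m^2 \;\geq\; \|\psi\|_2^2 .
\]
Chaining the last two displays yields $\mathrm{Var}_{\widehat{\mu}}(\psi) \geq \underline{\widehat{\mu}}\,\|\psi\|_2^2$, which is the claim.

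The only point requiring a moment's thought — and what I'd flag as the "obstacle," though it is a mild one — is that one must recenter at the weighted mean $m$ \emph{before} applying $\widehat{\mu}_i \geq \underline{\widehat{\mu}}$. The naive attempt of lower-bounding $\sum_i \widehat{\mu}_i \psi_i^2 \geq \underline{\widehat{\mu}}\,\|\psi\|_2^2$ directly in $\mathrm{Var}_{\widehat{\mu}}(\psi) = \sum_i \widehat{\mu}_i \psi_i^2 - m^2$ fails, since one would then have to discard the term $-m^2 \le 0$, which goes the wrong way. Recentering converts this obstructive term into $+Nm^2 \geq 0$ precisely by means of the zero-sum hypothesis; everything else is a two-line computation.
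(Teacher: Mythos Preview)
Your proof is correct and follows essentially the same route as the paper: both recenter at the weighted mean, apply the lower bound $\widehat{\mu}_i \geq \underline{\widehat{\mu}}$ termwise, and then expand the unweighted sum using $\sum_i \psi_i = 0$ to obtain $\sum_i(\psi_i - m)^2 = \|\psi\|_2^2 + Nm^2 \geq \|\psi\|_2^2$. Your additional commentary on why the naive bound fails is a nice clarification but not a departure from the paper's argument.
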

\begin{proof}
    Let $\bar{\psi}$ denote the expectation of $\psi$ with respect to the density $\widehat{\mu}$, $\bar{\psi} = \sum_{i=1}^N\widehat{\mu}_i\psi_i$.
    \begin{gather*}
        \text{Var}_{\widehat{\mu}}(\psi) = \sum_{i=1}^N\widehat{\mu}_i(\psi_i-\bar{\psi})^2\geq \sum_{i=1}^N\underline{\widehat{\mu}}(\psi_i-\bar{\psi})^2 = \underline{\widehat{\mu}}\sum_{i=1}^N(\psi_i-\bar{\psi})^2 \geq \underline{\widehat{\mu}}\sum_{i=1}^N\psi_i^2 = \underline{\widehat{\mu}}||\psi||_2^2\ ,
    \end{gather*}
    where 
    \begin{gather*}
        \sum_{i=1}^N(\psi_i-\bar{\psi})^2 = \sum_{i=1}^N(\psi_i^2 +\bar{\psi}^2 - 2\bar{\psi}\psi_i) = \sum_{i=1}^N\psi_i^2 +N\bar{\psi}^2 - 2\bar{\psi}\sum_{i=1}^N\psi_i = \sum_{i=1}^N\psi_i^2 +N\bar{\psi}^2 \geq \sum_{i=1}^N\psi_i^2\ .
    \end{gather*}
\end{proof}

\begin{lem}\label{lem2bis}
    Let $\rho$ a probability measure on the compact set $X$ satisfying (H1).  If $\widehat{\psi}\in U$, then $\exists \underline{\widehat{\mu}}>0$, such that $\widehat{\mu}_i\geq \underline{\widehat{\mu}}$, $\forall i$, where
    \begin{equation}\label{eqn: induced measure}
        \widehat{\mu}_i = \int_X\dfrac{e^{\frac{\widehat{\psi}_i-tc( x,y_i)}{1-t}}}{\sum_{k=1}^N e^{\frac{\widehat{\psi}_k-tc({ x},y_k)}{1-t}}}\dd\rho(x)\ , t\in [0,1)  .
    \end{equation}
\end{lem}
\begin{proof}
    By using the bounds on $\widehat{\psi}$, $c$ and $\rho$ we get
    \[ \widehat{\mu}_i\geq \dfrac{\underline m}{N{e^{2\frac{1+t}{1-t}}\|c\|_\infty}}:= \underline{\widehat{\mu}}\ .\]
\end{proof}
\begin{rem}
Notice that 
    \begin{equation}\label{eqn: gradient}
        \{\nabla_\psi \Phi(\widehat\psi,t)\}_i =  \int_X\dfrac{e^{\frac{\widehat{\psi}_i-tc( x,y_i)}{1-t}}}{\sum_{k=1}^N e^{\frac{\widehat{\psi}_k-tc( x,y_k)}{1-t}}}\dd\rho(x)- \mu_i\ , t\in [0,1) \ .
    \end{equation}
\end{rem}
 In what follows, we will denote by $\nabla_{\psi,\psi}^{2,\perp}\Phi$ the Hessian of $\Phi$ with respect to $\psi$, restricted to the tangent space of $U$, $T_\psi U = \{v \in \mathbb{R}^N: \sum_{n=1}^kv_k=0\}$.  It is well known that the full Hessian $\nabla_{\psi,\psi}^{2}\Phi$ of $\Phi$ is not invertible, since $\Phi$ is constant along the direction $\mathbf{1}$, but $\nabla_{\psi,\psi}^{2, \perp}\Phi$ is invertible, and, as we show below, has uniform lower bounds in appropriate regions. 
\begin{thm}[Strong convexity of $\Phi$]
\label{thm:convexity}
     Let $\rho\in\mathcal P(X)$ and $\mu\in\mathcal P(Y)$ satisfy hypotheses (H1) and (H2), respectively. If $\widehat\psi\in U$ then there exists $\widehat C =\widehat C(t)>0$ such that  $\nabla_{\psi,\psi}^{2,\perp} \Phi(\widehat\psi,t) \succeq\widehat{C}\mathrm{Id_{N-1}} > 0$ for $t\in[0,1)$,
     where
    \begin{equation}\label{eqn: hessian formula}   
    \nabla_{\psi,\psi}^2 \Phi(\widehat\psi,t)=\frac{1}{1-t}\big(\mathrm{diag}(\nabla \Phi(\widehat\psi,t)-\nabla \Phi(\widehat\psi,t)\otimes\nabla \Phi(\widehat\psi,t)\big).
    \end{equation}
\end{thm}
\begin{proof}
    Take a $\widehat\psi\in U$ we easily get that:
    \begin{gather*}
       \langle v, \nabla_{\psi,\psi}^2 \Phi(\widehat{\psi},t)v\rangle\geq \frac{1}{\tilde C}\text{Var}_{\widehat{\mu}}(v)\ \forall v\in\Rn  ,
    \end{gather*}
    where $\tilde C = e^{2||c||_\infty\text{diam}(X)}\frac{\overline{m}}{\underline{m}}+(1-t)>0$, $\forall t\in[0,1)$ and $\widehat{\mu}$ is defined as in Lemma \ref{lem2bis}. Furthermore, using Lemma \ref{lem1}, we obtain, for $v \in T_\psi U$,
    \begin{gather*}
        \text{Var}_{\widehat{\mu}}(v)\geq \underline{\widehat{\mu}}||v||_2^2\implies \langle v, \nabla_{\psi,\psi}^2 \Phi(\widehat{\psi},t)v\rangle \geq \frac{1}{\tilde C}\underline{\widehat{\mu}}||v||_2^2\ .
    \end{gather*}
    Finally, using Lemma \ref{lem2bis}, one derives the existence and bound on $\underline{\widehat{\mu}}$ and thus the lower bound for the smallest eigenvalue of $\nabla_{\psi,\psi}^{2,\perp}\Phi$:
    \begin{gather*}
        \lambda_{\min}\{\nabla_{\psi,\psi}^{2,\perp} \Phi(\widehat{\psi},t)\}\geq \frac{1}{C} >0,\; \forall t\in[0,1)\ . 
    \end{gather*}
    As a result, we get that $C = \frac{\underline m}{\tilde Ce^{2\frac{1+t}{1-t}\|c\|_\infty}}$, and so the Hessian matrix $\nabla_{\psi,\psi}^{2} \Phi(\widehat{\psi},t)$ is positive semi-definite with a simple zero eigenvalue and corresponding eigenvector $\mathbf{1}$. 
\end{proof}

Notice that the result above provides strong convexity only on the interval $[0,1)$ since the lower bound degenerates as $t\to 1$. In order to establish strong convexity for all $t \in [0,1]$, it is necessary to undertake a more detailed analysis of the case in which the entropic solution closely approximates the solution of the unregularized problem. In particular, we focus on the regime $t \in [t^*,1]$.

\subsection{Strong convexity on $[t^*,1]$}
Firstly, we state some results when we consider a set of vectors $\psi$ which are small perturbations of a vector $\psi^0$ such that $\mu_i = \rho(\text{Lag}_i(\psi^0))$. The main results here is  that in this region we can obtain a lower bound on the smallest non-zero eigenvalue of $\nabla^2_{\psi,\psi}\Phi(\widehat{\psi},t)$ which does not depend on the regularization parameter $t$.

\begin{lem}\label{lem2}
    Let $\mu\in\mathbb{R}^N_+$ be a discrete probability vector with a lower bound $\underline{\mu}$, $\mu_i\geq \underline{\mu}>0$, $\forall i$ and $c$ a cost function satisfying (H3). In addition, let $\psi^0\in\mathbb{R}^N$ be a vector such that $\mu_i = \rho(\text{Lag}_i(\psi^0))$, $\forall i$. 
     Then there exists $\delta, \underline{\widehat{\mu}}>0$ such that
   if $||\widehat{\psi} - \psi^0||_2^2\leq \delta$, then 
   $\widehat{\mu}_i\geq \underline{\widehat{\mu}}$, $\forall i$, where $\widehat{\mu}_i$ is given by \eqref{eqn: induced measure}.
\end{lem}
\begin{proof}
  By Proposition 38 in \cite{merigot:hal-02494446} , the mapping $\hat \psi \mapsto \rho(\text{Lag}_i(\widehat{\psi}))$ is continuous, and so for $\delta$ sufficiently small, $||\hat \psi -\psi^0|| \le\delta$ implies   
   
  $$\rho(\text{Lag}_i(\widehat{\psi}))\geq \frac{1}{2}\rho(\text{Lag}_i(\psi^0)) \geq \frac{1}{2} \underline{\mu}\ .$$

    Next, using that $\text{Lag}_i(\widehat{\psi}_i)\subset X$, $\forall i$, and $X = \cup_{i=1}^N\text{Lag}_i(\widehat{\psi}_i)$, we get that:
    \begin{gather*}
        \widehat{\mu}_i = \int_X\frac{e^{\frac{\widehat{\psi}_i-tc(x,y_i)}{1-t}}}{\sum_{k=1}^N e^{\frac{\widehat{\psi}_k-tc(x,y_k)}{1-t}}}\dd\rho(x)\geq \int_{\text{Lag}_i(\widehat{\psi})}\frac{e^{\frac{\widehat{\psi}_i-tc(x,y_i)}{1-t}}}{\sum_{k=1}^N e^{\frac{\widehat{\psi}_k-tc(x,y_k)}{1-t}}}\dd\rho(x)\ .
    \end{gather*}
    From the definition of Laguerre cell it follows: 
    \[x\in \text{Lag}_i(\widehat{\psi}) \iff c(x,y_i) - \widehat{\psi_i}\leq c(x,y_k) - \widehat{\psi_k} \iff c(x,y_i)-c(x,y_k) + \widehat{\psi_k} - \widehat{\psi_i} \leq 0\ , \ \forall k\neq i.\]
    Thus, we get:
    \begin{gather*}
    \begin{split}
        \widehat{\mu}_i &\geq \int_{\text{Lag}_i(\widehat{\psi})}\frac{1}{\sum_{k=1}^N e^{\frac{tc(x,y_i)-tc(x,y_k)+\widehat{\psi}_k - \widehat{\psi}_i}{1-t}}}\dd\rho(x)\\
        & \geq \int_{\text{Lag}_i(\widehat{\psi})}\frac{1}{\sum_{k=1}^N e^{c(x,y_k)-c(x,y_i)}}\dd\rho(x)\\
        & \geq \int_{\text{Lag}_i(\widehat{\psi})}\frac{1}{\sum_{k=1}^N e^{2\|c\|_\infty} }\dd\rho(x) = \frac{1}{Ne^{2\|c\|_\infty}}\rho(\text{Lag}_i(\widehat{\psi})) \geq \frac{1}{2Ne^{2\|c\|_\infty}}\underline{\mu}\ .
    \end{split}
    \end{gather*}
    Finally, it follows that $\widehat{\mu}_i \geq \underline{\widehat{\mu}}$, $\forall i$, where $\underline{\widehat{\mu}}=\frac{1}{2Ne^{2\|c\|_\infty}}\underline{\mu}>0$.
\end{proof}
\begin{thm}\label{thm: uniform convexity independent of t}
 Let $\rho\in\mathcal P(X)$ and $\mu\in\mathcal P(Y)$ satisfy hypotheses (H1) and (H2), respectively, and $c$ be a cost function satisfying (H3). In addition, let $\psi^0\in\mathbb{R}^N$ be a vector such that $\mu_i = \rho(\text{Lag}_i(\psi^0))$, $\forall i$. Then, there exists a constant $\widehat{C}>0$, which is independent of $t$,  and a $\delta >0$ such that if 
 $||\widehat{\psi} - \psi^0||_2^2\leq \delta$, and $\widehat{\psi}\perp \mathbf{1}$, then $\lambda_{\min}\{\nabla_{\psi,\psi}^{2, \perp} \Phi(\widehat{\psi},t)\}\geq \widehat{C} > 0$.
\end{thm}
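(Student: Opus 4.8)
The plan is to re-run the three–step scheme behind Theorem~\ref{thm:convexity}, but with Lemma~\ref{lem2} playing the role of Lemma~\ref{lem2bis}: every ingredient of that proof other than the lower bound on the smoothed Laguerre masses already survives the limit $t\uparrow 1$, and Lemma~\ref{lem2} now supplies a lower bound on those masses that is itself independent of $t$. Recall that the proof of Theorem~\ref{thm:convexity} establishes, for every $\widehat\psi\in\mathbb{R}^N$ (the argument being insensitive to $\widehat\psi$) and every $v\in\mathbb{R}^N$,
\[
\langle v,\nabla^2_{\psi,\psi}\Phi(\widehat\psi,t)v\rangle\ \geq\ \tfrac{1}{\tilde C}\,\mathrm{Var}_{\widehat{\mu}}(v),\qquad \widehat{\mu}_i=\int_X \text{RLag}^t_i(\widehat\psi)(x)\,\dd\rho(x),
\]
with $\tilde C=e^{2\|c\|_\infty\mathrm{diam}(X)}\tfrac{\overline{m}}{\underline{m}}+(1-t)$. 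The first observation is that $\tilde C\leq e^{2\|c\|_\infty\mathrm{diam}(X)}\tfrac{\overline{m}}{\underline{m}}+1=:A$ for all $t\in[0,1]$, so this factor is already harmless as $t\uparrow 1$ and, since it does not depend on $\widehat\psi$, the displayed inequality applies unchanged when $\widehat\psi$ is a perturbation of $\psi^0$.

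The one step of Theorem~\ref{thm:convexity} that degenerates as $t\uparrow 1$ is the lower bound on $\widehat{\mu}_i$ coming from Lemma~\ref{lem2bis}; this is exactly what Lemma~\ref{lem2} repairs, and it is where the two hypotheses of the present theorem are used. Because $\mu_i=\rho(\text{Lag}_i(\psi^0))$ and $\widehat\psi$ is a sufficiently small perturbation of $\psi^0$, each \emph{hard} Laguerre cell of $\widehat\psi$ still carries mass $\rho(\text{Lag}_i(\widehat\psi))\geq\tfrac12\underline{\mu}$; and for $x\in\text{Lag}_i(\widehat\psi)$ the exponent $\big(tc(x,y_i)-tc(x,y_k)+\widehat\psi_k-\widehat\psi_i\big)/(1-t)$ in $\text{RLag}^t_i$ is, by the defining inequality of the Laguerre cell, at most $c(x,y_k)-c(x,y_i)\leq 2\|c\|_\infty$ for every $k$ — the dangerous factor $1/(1-t)$ cancelling. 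Integrating over $\text{Lag}_i(\widehat\psi)$ gives $\widehat{\mu}_i\geq \tfrac{1}{2Ne^{2\|c\|_\infty}}\underline{\mu}=:\underline{\widehat{\mu}}>0$, a bound with no $t$ in it. Applying Lemma~\ref{lem1} with the vector $v$ (which satisfies $\sum_i v_i=0$) and the probability weights $\widehat{\mu}$, we get $\mathrm{Var}_{\widehat{\mu}}(v)\geq\underline{\widehat{\mu}}\,\|v\|_2^2$ for every $v\perp\mathbf{1}$.

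Chaining the three inequalities, for all $t\in[0,1)$ and all $v\perp\mathbf{1}$,
\[
\langle v,\nabla^2_{\psi,\psi}\Phi(\widehat\psi,t)v\rangle\ \geq\ \frac{\underline{\widehat{\mu}}}{A}\,\|v\|_2^2\ =:\ \widehat{C}\,\|v\|_2^2,\qquad \widehat{C}=\frac{\underline{\mu}}{2Ne^{2\|c\|_\infty}\big(e^{2\|c\|_\infty\mathrm{diam}(X)}\tfrac{\overline{m}}{\underline{m}}+1\big)},
\]
which is the asserted bound on the smallest eigenvalue of $\nabla^2_{\psi,\psi}\Phi$ in the directions orthogonal to $\mathbf{1}$ (the direction $\mathbf{1}$ stays in the kernel, $\Phi(\cdot,t)$ being invariant under $\psi\mapsto\psi+s\mathbf{1}$); the value $t=1$ is recovered by letting $t\uparrow 1$, or by carrying the same bound over to the a.e.\ defined Hessian of the unregularized dual. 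The main — indeed only — genuinely new difficulty, beyond invoking Theorem~\ref{thm:convexity}, is the $t$–uniformity of Lemma~\ref{lem2}: one must confirm that restricting to perturbations of a vector whose \emph{hard} Laguerre cells realize the target masses keeps all \emph{smoothed} masses $\widehat{\mu}_i$ bounded below uniformly in $t$, which fails for a generic $\widehat\psi\in U$ — precisely why the constant $\underline{m}\,e^{-2\frac{1+t}{1-t}\|c\|_\infty}$ of Lemma~\ref{lem2bis} collapses as $t\uparrow 1$. A minor secondary point is to make ``small perturbation'' quantitative, i.e.\ that $\psi\mapsto\rho(\text{Lag}_i(\psi))$ is continuous at $\psi^0$ so that a ball of radius $\sqrt{\delta}$ around $\psi^0$ keeps each $\rho(\text{Lag}_i(\cdot))\geq\tfrac12\underline{\mu}$; this is immediate since $\rho$ has a bounded density and the Laguerre facets move continuously with $\psi$.
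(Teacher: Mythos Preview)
Your proposal is correct and follows essentially the same three-step route as the paper: a variance lower bound for the Hessian quadratic form (the paper cites Delalande's Theorem~3.2 and writes the constant with the Lipschitz constant $L_c$, whereas you recycle the $\tilde C$ from Theorem~\ref{thm:convexity} and bound $1-t\le 1$, which is an equivalent choice), then Lemma~\ref{lem1} to pass from variance to $\|v\|_2^2$, and finally Lemma~\ref{lem2} in place of Lemma~\ref{lem2bis} to supply the $t$-independent lower bound on the smoothed masses. Your additional remarks on recovering $t=1$ by a limit and on the continuity of $\psi\mapsto\rho(\mathrm{Lag}_i(\psi))$ needed to make ``small perturbation'' quantitative are not in the paper's proof but are natural clarifications.
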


\begin{proof}
    Take a $\widehat\psi$ satisfying the hypothesis of the theorem, similarly to Theorem 3.2 of \cite{delalande2022nearly}, we easily get that:
    \begin{gather*}
        \langle v,\nabla_{\psi,\psi}^2 \Phi(\widehat\psi,t)v\rangle\geq \frac{1}{C}\text{Var}_{\widehat{\mu}}(v)\ \forall v\in\Rn  ,
    \end{gather*}
    where $C = e^{L_c\text{diam}(X)}\frac{\overline{m}}{\underline{m}}+1>0$, $L_c$ is the Lipschitz constant of the cost  and $\widehat{\mu}$  is defined as in Lemma \ref{lem2}. Furthermore, using Lemma \ref{lem1}, we obtain that 
    \begin{gather*}
        \text{Var}_{\widehat{\mu}}(v)\geq \underline{\widehat{\mu}}||v||_2^2\implies \langle v,\nabla_{\psi,\psi}^{2, \perp} \Phi(\widehat\psi,t)v\rangle\geq \frac{1}{C}\underline{\widehat{\mu}}||v||_2^2\ .
    \end{gather*}
    Finally, using Lemma \ref{lem2}, one derives the existence and bound on $\underline{\widehat{\mu}}$:
    \begin{gather*}
        \underline{\widehat{\mu}} = \frac{1}{2N}\underline{\mu} \implies \lambda_{\min}\{\nabla_{\psi,\psi}^{2,\perp} \Phi(\widehat\psi,t)\}\geq \frac{1}{C}\frac{1}{2Ne^{2\|c\|_\infty}}\underline{\mu} >0\ . 
    \end{gather*}
    As a result, we get that $\widehat{C} = \frac{1}{2NCe^{2\|c\|_\infty}}\underline{\mu}$. 
\end{proof}

\section{An ODE characterization of semi-discrete entropic optimal transport}
\label{sec:ODE}
Notice now that since the functional is convex any minimizer of \eqref{Dual2} is equivalently a solution to the equation $\nabla_\psi \Phi(\mathbf{\psi}(t), t) = 0$. Thanks to the regularity of $\Phi$ one can now differentiate with respect to $t$ and obtain the following governing ODE:
\begin{gather}\label{ODE}
    \nabla_{\psi,\psi}^2\Phi(\mathbf{\psi}(t),t)\mathbf{\psi}'(t) + \frac{\partial}{\partial t}\nabla_{\psi}\Phi(\mathbf{\psi}(t),t) = 0,\ t\in[0,1]\ ,
\end{gather}
where the gradient vector of $\Phi(\mathbf{\psi},t)$ from \eqref{Dual2} is given by \eqref{eqn: gradient}.

By using a direct calculation, we get the derivative vector:
\begin{gather}
\label{eq:der}
    \frac{\partial}{\partial t} \bigg{[}\frac{\partial \Phi(\mathbf{\psi},t)}{\partial \psi_j}\bigg{]} = \int_X\frac{\sum_{k=1, k\neq j}^N A_k(x,1)e^{\frac{A_k(x,t)]}{1-t}}}{\bigg{[}(1-t)\bigg{(}1 + \sum_{k=1,k\neq j}^N e^{\frac{A_k(x,t)}{1-t}}\bigg{)}\bigg{]}^2}\dd\rho(x)\ ,\ j=1,2,\dots, N\ ,
\end{gather}
where $A_k(x,t) = \psi_k - \psi_j + tc(x,y_j) - tc(x,y_k)$. Similarly, one can write the Hessian matrix in \eqref{eqn: hessian formula} as:
\begin{gather*}
    \nabla^2_{\mathbf{\psi},\mathbf{\psi}}\Phi(\mathbf{\psi}(t),t) = \frac{1}{1-t}\mathbb{E}_{\rho}[\pi(\mathbf{\psi})\pi(\mathbf{\psi})^T - \text{diag}(\pi(\mathbf{\psi}))]\ ,
\end{gather*}
where $\mathbb{E}_{\rho}[f(x)] := \int_Xf(x)\dd\rho(x)$ and 
\begin{gather*}
    \pi(\mathbf{\psi})_{j} = \frac{e^{\frac{\psi_j - tc(x,y_j)}{1-t}}}{\sum_{k=1}^N e^{\frac{\psi_k - tc(x,y_k)}{1-t}}}\ , \ j=1,2,\dots, N\ .
\end{gather*}
Since $\psi(0)$ can be computed in closed form, it is clear that the solution $\psi(t)$ to \eqref{Dual2} solves the Cauchy problem \eqref{cauchy}; the following Theorem asserts the converse, that any solution to \eqref{cauchy} must be an optimal trajectory of solutions to \eqref{Dual2}.



\begin{thm}
     Let $\psi(t)$ be a solution to \eqref{Dual2} for $t\in[0,1)$. Then the trajectory $t\mapsto \psi(t)$ is smooth and is characterized on $t\in[0,1)$ as the unique solution $\psi(t) \in U$ to the Cauchy problem
\begin{equation}
 \label{cauchy}
\boxed{ \begin{cases}
&\nabla_{\psi,\psi}^2\Phi(\mathbf{\psi}(t),t)\mathbf{\psi}'(t) + \frac{\partial}{\partial t}\nabla_\psi\Phi(\mathbf{\psi}(t),t) = 0,\ t\in[0,1)\ ,\\
 & \mathbf{\psi}(0)=\log{\mu} - \frac{1}{N}\sum_{k=1}^N\log{\mu_k}\ .
\end{cases}  } 
\end{equation} 

\end{thm}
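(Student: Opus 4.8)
The plan is to assemble three ingredients: an explicit computation of the minimizer of $\Phi(\cdot,0)$, the implicit function theorem, and the Picard--Lindel\"of theorem, all powered by the strong convexity of $\Phi$ from Theorem~\ref{thm:convexity}. Throughout I restrict attention to the hyperplane $\mathbf{1}^\perp:=\{\psi\in\R^N:\sum_{k=1}^N\psi_k=0\}$. This is natural because $\rho$ and $\mu$ are probability measures, so $\Phi(\psi+c\mathbf{1},t)=\Phi(\psi,t)$ for all $c\in\R$; consequently $\mathbf{1}\in\ker\nabla^2_{\psi,\psi}\Phi(\psi,t)$, the gradient $\nabla_\psi\Phi(\psi,t)$ always lies in $\mathbf{1}^\perp$ (its entries sum to $\sum_k\mathbb{E}_\rho[\pi_k]-\sum_k\mu_k=0$), and the Cauchy problem \eqref{cauchy} is to be read with $\psi(t)\in\mathbf{1}^\perp$, the Hessian being inverted on $\mathbf{1}^\perp$, where by Theorem~\ref{thm:convexity} it is positive definite for every $t\in[0,1)$.

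\emph{Initial condition.} Since $\int_X\dd\rho=1$, at $t=0$ the functional collapses to $\Phi(\psi,0)=\log\big(\sum_{k=1}^N e^{\psi_k}\big)-\sum_{k=1}^N\psi_k\mu_k-1$, which is convex, and the equation $\nabla\Phi(\psi,0)=0$ reads $e^{\psi_j}/\sum_k e^{\psi_k}=\mu_j$ for all $j$, i.e.\ $\psi_j=\log\mu_j+\text{const}$; imposing $\psi\in\mathbf{1}^\perp$ fixes the constant as $-\tfrac1N\sum_k\log\mu_k$, so $\psi(0)=\log\mu-\tfrac1N\sum_{k=1}^N\log\mu_k$, the prescribed datum.

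\emph{Smoothness and the ODE.} Fix $t_0\in[0,1)$ and view $H(\psi,t):=\nabla_\psi\Phi(\psi,t)$ as a $C^\infty$ map from a neighbourhood of $(\psi(t_0),t_0)$ in $\mathbf{1}^\perp\times[0,1)$ to $\mathbf{1}^\perp$ (differentiation under the integral sign in \eqref{GradPhi1} is licit since $X$ is compact, $c$ is continuous, and $t_0<1$). Here $H(\psi(t_0),t_0)=0$ because $\psi(t_0)$ minimizes the convex function $\Phi(\cdot,t_0)$, and $D_\psi H(\psi(t_0),t_0)=\nabla^2_{\psi,\psi}\Phi(\psi(t_0),t_0)\big|_{\mathbf{1}^\perp}$ is invertible by Theorem~\ref{thm:convexity}. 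The implicit function theorem then yields a $C^\infty$ curve $\tilde\psi(\cdot)$ near $t_0$, valued in $\mathbf{1}^\perp$, with $H(\tilde\psi(t),t)=0$; strict convexity of $\Phi(\cdot,t)$ on $\mathbf{1}^\perp$ forces $\tilde\psi(t)$ to be the unique minimizer, hence $\tilde\psi(t)=\psi(t)$. As $t_0$ was arbitrary, $\psi\in C^\infty([0,1))$, and differentiating the identity $\nabla_\psi\Phi(\psi(t),t)\equiv 0$ in $t$ produces exactly \eqref{ODE}, so $\psi(\cdot)$ solves \eqref{cauchy}.

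\emph{Uniqueness.} On $[0,1)\times\mathbf{1}^\perp$ the restricted Hessian is invertible with a smooth inverse and $\partial_t\nabla_\psi\Phi$ is smooth (cf.\ \eqref{eq:der}), so \eqref{ODE} is equivalent to $\psi'(t)=-[\nabla^2_{\psi,\psi}\Phi(\psi(t),t)|_{\mathbf{1}^\perp}]^{-1}\partial_t\nabla_\psi\Phi(\psi(t),t)$, a locally Lipschitz vector field; Picard--Lindel\"of gives local uniqueness of the solution to \eqref{cauchy}, and since we have exhibited the dual curve $\psi(\cdot)$ as a solution on all of $[0,1)$, any solution agrees with it throughout $[0,1)$. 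The step I expect to require the most care is the bookkeeping around the one-dimensional kernel $\R\mathbf{1}$: one must consistently work on $\mathbf{1}^\perp$ (equivalently, use the Moore--Penrose pseudoinverse of $\nabla^2_{\psi,\psi}\Phi$) both in the implicit function argument and in the very reading of \eqref{cauchy}, since the ODE alone does not pin down the $\mathbf{1}$-component of $\psi(t)$; once Theorem~\ref{thm:convexity} is granted, the remaining arguments are routine.
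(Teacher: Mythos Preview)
Your proof is correct and follows essentially the same route as the paper's: invert the Hessian on $\mathbf{1}^\perp$ via Theorem~\ref{thm:convexity}, rewrite the ODE in normal form, and invoke Cauchy--Lipschitz/Picard--Lindel\"of. You are somewhat more explicit than the paper in justifying the smoothness of $t\mapsto\psi(t)$ (via the implicit function theorem) and in computing $\psi(0)$, whereas the paper works directly on the bounded set $U$ to get a uniform Lipschitz bound on each $[0,\bar t]$; the core argument is the same.
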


\begin{rem}
The theorem implies that one can solve the Cauchy problem \eqref{cauchy} uniquely to obtain $\psi(t)$ for $t \in [0,1)$.  As it is well known that $\lim_{t \rightarrow 1} \psi(t) =\psi(1)$, one can then take the limit to obtain $\psi(1)$ and, consequently, one can recover the solution to the unregularized problem from the ODE.  In practice, one might worry that numerical instabilities could potentially arise from degeneracies of the Hessian as $t$ approaches $1$; Theorem \ref{thm: uniform convexity independent of t} ensures that this is not the case.   

Under stronger conditions, one can actually go slightly further and show that the ODE is satisfied up to $t =1$,  see Proposition \ref{prop: ODE up to 1} below. 
\end{rem}

\begin{proof}
For any fixed $\bar t <1$, we prove existence  and uniqueness of a solution to the Cauchy problem on $[0,\bar t]$ by applying the Cauchy-Lipschitz theorem on $[0,\bar t] \times U$, where $U:=\{\psi\in\Rn\;|\;\|\psi\|\leq 2\|c\|_\infty,\;\psi\perp \mathbf{1}\}$.  Since $\Phi$ is smooth, it has  bounded derivative on this set, and, as noted above, it is known that the optimal $\psi$ remains in $U$. Since the restricted Hessian $\nabla_{\psi,\psi}^{2, \perp}\Phi(\mathbf{\psi},t)$ is uniformly positive definite on $[0,\bar t] \times U$ by Theorem \ref{thm:convexity}, one can rewrite the ODE as:
$$\mathbf{\psi}'(t) =F(\psi(t),t):=-\big[\nabla_{\psi,\psi}^{2, \perp}\Phi(\mathbf{\psi}(t),t)\big]^{-1}\big( \frac{\partial}{\partial t}\nabla_\psi\Phi(\mathbf{\psi}(t),t)\big)\ ,
$$
where the function $F$ is Lipschitz on $[0,\bar t] \times U$.  The Cauchy-Lipschitz theorem then yields well-posedness on $[0,\bar t]$; as $\bar t<1$ is arbitrary, we get existence and uniqueness on $[0,1)$. 

\end{proof}
To ensure that $\psi(t)$ solves the ODE in \eqref{cauchy} up to $t=1$, we require stronger conditions on $c, X,Y$ and $\rho$, which we define in the following. Before doing this we need two more definitions
\begin{defn}[$Y$ generic with respect to $c$]
We say that $Y$ is \emph{generic with respect to $c$} if for all distinct, $y_0,y_1,y_2 \in Y$ the intersection of any level sets of $x \mapsto c(x,y_0)-c(x,y_1)$  and $x \mapsto c(x,y_0)-c(x,y_2)$ has $(n-1)$ -dimensional Hausdorff measure $0$. 
\end{defn}
\begin{defn}[$Y$ generic with respect to $\partial X$]
 We say $Y$ is \emph{generic with respect to $\partial X$} if for any $y_0,y_1 \in Y$, the intersection 
of any level set of $x \mapsto c(x,y_0)-c(x,y_1)$ with $\partial X$ has $(n-1)$ -dimensional Hausdorff measure $0$.
\end{defn}
\begin{itemize}
     \item[(H4)] The measure  $\rho$ satisfies the assumption (H1) with an $\alpha-$Hölder continuous density.
     \item[(H5)] The cost function is $C^2$. 
\end{itemize}

\begin{proposition}
\label{prop:derivative}
 Let $c,\rho,\nu$ verify (H1)-(H5), $Y$ be generic with respect to both $c$ and $\partial X$, and $c$ be twisted. Then  $\frac{\partial}{\partial t}\nabla_\psi \Phi(\psi (t),t) \rightarrow 0$ as $t\rightarrow 1$ where $\psi(t)$ is the solution to \eqref{cauchy}. 
\end{proposition}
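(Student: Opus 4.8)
The plan is to show that the integrand defining $\frac{\partial}{\partial t}\nabla\Phi(\psi,t)$ concentrates, as $t\to 1$, on the union of the boundaries between Laguerre cells, a set of $\rho$-measure zero under the genericity hypotheses. Recall from \eqref{eq:der} that the $j$-th component of $\frac{\partial}{\partial t}\nabla\Phi(\psi(t),t)$ is an integral over $X$ of a kernel built from the quantities $A_k(x,t) = \psi_k(t) - \psi_j(t) + t\big(c(x,y_j) - c(x,y_k)\big)$. First I would fix $x$ in the interior of a Laguerre cell $\mathrm{Lag}_i(\psi(1))$ for the limiting potential $\psi(1)$ (equivalently, where the ``$b$-transform'' argmax is a unique index $i$). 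Using the known convergence $\psi(t)\to\psi(1)$, one checks that for such $x$ the exponents $\frac{A_k(x,t)}{1-t}$ all tend to $+\infty$ as $t\to 1$ when $i=j$ (making the kernel's numerator, which carries an extra $\frac{1}{(1-t)^2}$ but only a bounded numerator $A_k(x,1)$, vanish because the denominator blows up like $\big(e^{\text{const}/(1-t)}\big)^2$), and an analogous domination when $i\neq j$. So pointwise a.e.\ the integrand $\to 0$.

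The second step is to produce an integrable dominating function so that dominated convergence applies. Here I would use that $\psi(t)$ stays in the bounded set $U$ (so $|\psi_k(t)-\psi_j(t)|\le 4\|c\|_\infty$), that $c\in C^2$ gives $|A_k(x,1)|\le 2\|c\|_\infty$, and that for each fixed $x$ the denominator $(1-t)\big(1+\sum_{k\neq j}e^{A_k(x,t)/(1-t)}\big)$ is bounded below: indeed at least one term in the full sum $\sum_{k=1}^N e^{(\psi_k - tc(x,y_k))/(1-t)}$ — after renormalizing — is comparable to the maximal one, so the normalized weights $\pi(\psi(t))_k$ are bounded by $1$ and the kernel is controlled by $\frac{C}{1-t}\sum_{k\neq j}\pi_k\pi_j$ uniformly. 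Actually a cleaner route: rewrite \eqref{eq:der} in the $\pi$-notation as $\int_X \frac{1}{1-t}\sum_{k\neq j}\big(c(x,y_k)-c(x,y_j)\big)\pi_j(\psi)\pi_k(\psi)\,\dd\rho$, bound $|c(x,y_k)-c(x,y_j)|\le 2\|c\|_\infty$, and observe $\frac{1}{1-t}\pi_j\pi_k \le \frac{1}{1-t}e^{-|A_j - A_k|/(1-t)} \cdot(\text{bounded})$, which is $\le \sup_{s>0} s e^{-s|\Delta|}\cdot(\ldots)$ only on the set where the two relevant exponents differ; on the ``boundary layer'' where they nearly coincide one still has the trivial bound $\frac{1}{1-t}\pi_j\pi_k \le \frac{1}{4(1-t)}$, which is not integrable in $t$ but we do not integrate in $t$ — for each fixed $t<1$ this is a bounded function of $x$, and we only need a $t$-uniform $L^1(\rho)$ bound on the boundary layer, whose $\rho$-measure we must show shrinks fast enough.

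This is where the genericity hypotheses and (H3) enter, and it is the main obstacle. The essential estimate is quantitative: the set $S_\eta^{i,k} := \{x\in X : |c(x,y_i)-\psi_i(1) - (c(x,y_k)-\psi_k(1))| \le \eta\}$ — a neighborhood of a level set of $x\mapsto c(x,y_i)-c(x,y_k)$ — has $\rho(S_\eta^{i,k}) \to 0$ as $\eta\to 0$, and the contribution of the kernel from outside all such layers is genuinely exponentially small. ``$Y$ generic with respect to $c$'' and ``with respect to $\partial X$'', together with $c$ twisted and $C^2$ and $\rho$ Hölder (hence absolutely continuous with controlled modulus), guarantee the level sets have $\mathcal H^{n-1}$-measure zero intersections so that the layers are genuine thin tubes and $\rho(S_\eta^{i,k}) = o(1)$; one then splits $X$ into the $O(1/(1-t))^{?}$-thin layer around each interface (choosing the layer width $\eta(t)\to 0$ slowly, e.g.\ $\eta(t) = \sqrt{1-t}$) and its complement, using the trivial $L^\infty$ bound $\frac{1}{1-t}$ on the layer (whose $\rho$-mass is $o(1)$, beating the $\frac{1}{1-t}$ only if the tube shrinks faster than $1-t$ — so the width must be chosen more carefully, e.g.\ $\eta(t) = (1-t)\log\frac{1}{1-t}$, making the complement contribute $\lesssim \frac{1}{1-t}e^{-\eta/(1-t)} = \frac{1}{1-t}\cdot(1-t) \to 0$ and the layer contribute $\lesssim \frac{1}{1-t}\rho(S_{\eta(t)})$, which forces us to know $\rho(S_\eta) = o(\eta/\log(1/\eta))$). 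Making the tube-volume rate compatible with the $\frac{1}{1-t}$ blow-up is the crux; I expect one needs the stability of Laguerre cells (as in \cite{delalande2022nearly}) to control $\|\psi(t)-\psi(1)\|$ quantitatively in terms of $1-t$ so that ``$\psi(1)$-interfaces'' and ``$\psi(t)$-interfaces'' nearly coincide, and the $C^2$/twisted hypothesis to get a lower bound $|\nabla_x(c(\cdot,y_i)-c(\cdot,y_k))|\ge \kappa>0$ on the relevant tubes (away from the finitely many bad points where two interfaces meet, handled by genericity), which gives the linear-in-$\eta$ volume bound $\rho(S_\eta^{i,k})\le C\eta$ via the coarea formula. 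I would carry it out as: (1) pointwise a.e.\ convergence, (2) reduction via $\pi$-notation to estimating $\frac{1}{1-t}\int \pi_j\pi_k\,\dd\rho$, (3) coarea/genericity to bound tube volumes linearly, (4) optimize the layer width to kill both pieces.
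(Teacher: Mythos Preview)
Your overall architecture --- pointwise convergence away from interfaces, exponential decay off a tube of width $\eta$ around the interfaces, and a volume bound on the tube --- matches the paper's, but there is a genuine gap in step (4) that cannot be closed by ``optimizing the layer width'' alone.

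First, a correction: your ``cleaner route'' rewriting is off by one power of $(1-t)$. The integrand is
\[
\sum_{k\neq j}\frac{A_k(x,1)}{(1-t)^2}\,\pi_j(x)\pi_k(x),
\qquad A_k(x,1)=\psi_k-\psi_j+c(x,y_j)-c(x,y_k),
\]
so the prefactor is $(1-t)^{-2}$, not $(1-t)^{-1}$. With the correct power, the off-tube contribution is $\lesssim (1-t)^{-2}e^{-\eta/(1-t)}$, while your naive $L^\infty$ bound on the tube together with the (correct) coarea estimate $\rho(S_\eta)\le C\eta$ gives a tube contribution $\lesssim \eta/(1-t)^{2}$. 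Making the first vanish requires $\eta/(1-t)\to\infty$; making the second vanish requires $\eta=o((1-t)^{2})$. These are incompatible, and no choice of $\eta(t)$ resolves it. (You already sensed this tension; even with your erroneous $(1-t)^{-1}$, your own trial $\eta=(1-t)\log\tfrac{1}{1-t}$ gives off-tube $\to 1$ and on-tube $\to\infty$.)

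What the paper (following Delalande) does, and what your proposal is missing, is a \emph{cancellation} on the tube. One parametrizes each tubular piece by $(x_0,s)\mapsto x_0+s\,d_{ij}^{x_0}$ with $x_0$ on the interface $H_{ij}$ and $d_{ij}^{x_0}$ the normalized transverse direction $p_i^{x_0}-p_j^{x_0}$ (twistedness ensures this is nonzero). A Taylor expansion gives $\Delta_{ij}(x_0+s d_{ij}^{x_0},1)=s+O(s^2)$, so the dominant part of the integrand is \emph{odd} in $s$ while $\pi_j\pi_k$ is, to leading order, even. Pairing $s$ with $-s$ and using the $\alpha$-H\"older continuity of $\rho$ (this is exactly where (H3) enters) upgrades the tube bound from $\eta/(1-t)^2$ to $\eta^{2+\alpha}/(1-t)^2+\eta^{3}/(1-t)^2$, which \emph{can} be made to vanish together with the off-tube term by taking, e.g., $\eta=(1-t)^\beta$ with $\tfrac{2}{2+\alpha}<\beta<1$. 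The paper also separates out a ``corner'' region $C_{i,\eta,\gamma}$ near triple intersections, controlled using the genericity of $Y$ with respect to $c$; your proposal folds this into the tube and does not isolate it.

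In short: your steps (1)--(3) are fine, but step (4) as stated fails; the missing idea is the odd/even symmetry across the interface, and it is precisely this step that consumes hypotheses (H3) and (H4).
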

 The proof of the proposition is very similar to the one for the quadratic cost of \cite{delalande2022nearly}[Proposition 5.1]; the main difference consists in partitioning the space $X$, to estimate the integral in \eqref{eq:der}, by looking at what happens on the tangent space $x\mapsto \nabla_x c(x,y_j)$ for every $x\in X$ and for each component $j$ of the derivative. For the sake of completeness  we have detailed the proof in Appendix \ref{appendix}.

\begin{proposition}\label{prop: ODE up to 1}
If $c$ is twisted, $Y$ is generic with respect to both $c$ and $\partial X$ and assumptions (H1)-(H4) are satisfied, the ODE in \eqref{cauchy} is satisfied on
$[0,1]$.
\end{proposition}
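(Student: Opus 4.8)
The plan is to promote the solution $\psi$ of the Cauchy problem \eqref{cauchy}, which by the previous theorem is smooth and solves the ODE on $[0,1)$ with $\psi(t)\to\psi(1)$ as $t\to1$, to a $C^1$ function on the closed interval $[0,1]$ that still satisfies the ODE at the endpoint. Recall that $\psi(1)$ is the solution of the unregularized dual problem, normalized so that $\psi(1)\perp\mathbf 1$, and is therefore characterized by $\mu_i=\rho(\text{Lag}_i(\psi(1)))$ for every $i$; recall also that each $\psi(t)$ may be taken in $\mathbf 1^\perp$ (the objective \eqref{Dual2} is invariant under adding constants to $\psi$, and the initial datum lies in $\mathbf 1^\perp$). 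The three facts to be assembled are: the $t$-uniform strong convexity of Theorem \ref{thm: uniform convexity independent of t}, the vanishing $\frac{\partial}{\partial t}\nabla\Phi(\psi(t),t)\to0$ of Proposition \ref{prop:derivative}, and the solved form of the ODE.

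First I would choose $\psi^0:=\psi(1)$, which indeed satisfies the hypotheses $\mu_i=\rho(\text{Lag}_i(\psi^0))$ and $\psi^0\perp\mathbf 1$ of Theorem \ref{thm: uniform convexity independent of t}. Because $\psi(t)\to\psi^0$, there is $t^*<1$ such that $\|\psi(t)-\psi^0\|^2\le\delta$ and $\psi(t)\perp\mathbf 1$ for all $t\in[t^*,1)$, so Theorem \ref{thm: uniform convexity independent of t} gives $\nabla^2_{\psi,\psi}\Phi(\psi(t),t)\succeq\widehat C\,\mathrm{Id}$ on $\mathbf 1^\perp$ with $\widehat C>0$ independent of $t$. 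Next I would note that both $\psi'(t)$ and $\frac{\partial}{\partial t}\nabla\Phi(\psi(t),t)$ lie in $\mathbf 1^\perp$ — the first since $\langle\mathbf 1,\psi(t)\rangle\equiv0$, the second since $\langle\mathbf 1,\nabla\Phi(\psi,t)\rangle=\sum_j\mu_j-\int_X\sum_j\pi(\psi)_j\,\dd\rho\equiv0$ — and that $\mathbf 1^\perp$ is invariant under the Hessian, on which subspace the Hessian is invertible with inverse of norm at most $1/\widehat C$. Solving the ODE for $\psi'$ on $[t^*,1)$ then yields
\[
\psi'(t)=-\big[\nabla^2_{\psi,\psi}\Phi(\psi(t),t)\big]^{-1}\frac{\partial}{\partial t}\nabla\Phi(\psi(t),t),\qquad \|\psi'(t)\|\le\frac{1}{\widehat C}\Big\|\frac{\partial}{\partial t}\nabla\Phi(\psi(t),t)\Big\|,
\]
and by Proposition \ref{prop:derivative} the right-hand side tends to $0$, so $\psi'(t)\to0$ as $t\to1$.

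To finish, since $\psi$ is continuous on $[0,1]$, differentiable on $[0,1)$, and $\psi'$ has the limit $0$ at $t=1$, the mean value theorem (equivalently, $\psi(t)=\psi(t^*)+\int_{t^*}^t\psi'(s)\,\dd s$ with an integrand extending continuously to $[t^*,1]$) shows that $\psi$ is differentiable at $t=1$ with $\psi'(1)=0$, hence $\psi\in C^1([0,1])$. At $t=1$ one has $\frac{\partial}{\partial t}\nabla\Phi(\psi(1),1):=\lim_{t\to1}\frac{\partial}{\partial t}\nabla\Phi(\psi(t),t)=0$ by Proposition \ref{prop:derivative}, while $\nabla^2_{\psi,\psi}\Phi(\psi(t),t)\psi'(t)=-\frac{\partial}{\partial t}\nabla\Phi(\psi(t),t)\to0$; moreover the smoothed Laguerre weights $\pi(\psi(t))$ converge in $L^1(\rho)$ to the indicators $\mathbf 1_{\text{Lag}_i(\psi(1))}$ (whose pairwise overlaps are $\rho$-null by the genericity hypotheses), so $\nabla^2_{\psi,\psi}\Phi(\psi(t),t)=\frac{1}{1-t}\mathbb E_\rho[\pi\pi^\top-\mathrm{diag}(\pi)]$ converges to the finite Laguerre Hessian of the unregularized dual, which is positive definite on $\mathbf 1^\perp$ by Theorem \ref{thm: uniform convexity independent of t}. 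Thus all terms in \eqref{cauchy} are defined and vanish at $t=1$, and \eqref{cauchy} holds on $[0,1]$.

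The genuinely delicate content is already packaged into Proposition \ref{prop:derivative} and Theorem \ref{thm: uniform convexity independent of t}: the $t$-uniform lower bound on the spectrum of the Hessian near $t=1$, and the vanishing of $\frac{\partial}{\partial t}\nabla\Phi$, both of which require the boundary-layer estimates around the Laguerre facets and hence the twistedness of $c$, the genericity of $Y$ with respect to $c$ and to $\partial X$, and the H\"older regularity of $\rho$. Granting those, the work above is essentially bookkeeping: verifying that $\psi(1)$ may serve as the base point $\psi^0$, that $\psi'$ and $\frac{\partial}{\partial t}\nabla\Phi$ remain in $\mathbf 1^\perp$ so that the Hessian may be inverted there, and the elementary extension of a continuously differentiable curve to a closed endpoint.
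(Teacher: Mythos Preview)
Your overall strategy matches the paper's: invoke Proposition~\ref{prop:derivative} for the vanishing of $\partial_t\nabla\Phi$, ensure the Hessian at $t=1$ makes sense, and verify the ODE there. You are in fact more careful than the paper in explicitly deducing $\psi'(t)\to0$ from the uniform spectral lower bound of Theorem~\ref{thm: uniform convexity independent of t} and then promoting $\psi$ to $C^1([0,1])$ with $\psi'(1)=0$.

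There is, however, one genuine gap. Your claim that $\frac{1}{1-t}\mathbb E_\rho[\pi\pi^\top-\mathrm{diag}(\pi)]$ converges to the unregularized Laguerre Hessian does \emph{not} follow from $L^1(\rho)$ convergence of the smoothed weights $\pi$ to the cell indicators: under that convergence the integrand $\pi\pi^\top-\mathrm{diag}(\pi)$ tends to $0$ $\rho$-a.e.\ (the indicators are disjoint and idempotent), so you are facing a $0/0$ limit whose value is a surface integral over the Laguerre facets and requires a co-area/boundary-layer analysis of the same flavor as the proof of Proposition~\ref{prop:derivative}. The paper handles precisely this point by citing Theorems~47 and~54 of the M\'erigot--Thibert survey \cite{merigot:hal-02494446}, which respectively give $\mathcal C^2$ regularity of the unregularized Kantorovich functional (hence existence of $\nabla^2_{\psi,\psi}\Phi(\cdot,1)$) and convergence of the regularized Hessian to it; this is exactly where the twist, genericity, and H\"older hypotheses (H3)--(H4) enter. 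Once you replace your $L^1$ sentence with that citation --- or simply invoke existence of $\nabla^2_{\psi,\psi}\Phi(\psi(1),1)$, since with $\psi'(1)=0$ and $\partial_t\nabla\Phi(\psi(1),1)=0$ the ODE at $t=1$ reads $H\cdot 0+0=0$ and only needs $H$ to be finite --- your argument is complete.
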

\begin{proof}
We need only to show that the ODE is satisfied at $t=1$.  It is well known that $\psi(t) \rightarrow \psi(1)$, the solution to the unregularized problem (e.g., see Theorem 1.1 in \cite{nutz2022entropic} and Theorem 1.3 in \cite{altschuler2022asymptotics}).  Furthermore, the conditions ensure  existence and continuity 
of the Hessian $\nabla^2_{\psi\psi}\Phi(\psi,1)$ of the unregularized problem (Theorem 47 in \cite{merigot:hal-02494446}), as well as the convergence of $\nabla^2_{\psi\psi}\Phi(\psi,t)$ to it (Theorem 54 in \cite{merigot:hal-02494446}).  Since the mixed derivative $\frac{\partial}{\partial t}\nabla_\psi \Phi(\psi(t),t) \rightarrow 0$,   the ODE we want to solve it $t=1$ is
\begin{equation}\label{eqn: ODE at 1}
\nabla^2_{\psi\psi}\Phi(\psi,1) \psi'(1) =-\frac{\partial}{\partial t}\nabla_\psi \Phi(\psi(1),1) =0
\end{equation}
Taking the limit of the ODE in \eqref{cauchy} as $t\rightarrow 1$ and using Proposition \ref{prop:derivative}, we get that $\nabla_{\psi,\psi}^2\Phi(\mathbf{\psi}(t),t)\mathbf{\psi}'(t) \rightarrow 0$, which, since $\nabla_{\psi,\psi}^2\Phi(\mathbf{\psi}(t),t)$ is bounded below on $U$, implies that $\psi'(t) \rightarrow 0$ as $t \rightarrow 1$.  We conclude that $\psi'(1) =0$.

\end{proof}

\begin{rem}[Rate of convergence]
  Notice that the results above  imply that one can obtain  the rate of convergence of the dual potentials, the primal solution and the entropic cost as in \cite{delalande2022nearly,altschuler2022asymptotics}, for a generic cost function.  In particular, since $\psi'(1)=0$ by Propositions \ref{prop:derivative} and \ref{prop: ODE up to 1}, we have $\|\psi(t) -\psi(1)\| =o(1-t)$.
\end{rem}

\begin{rem}[Mass of the Laguerre cells]
In \cite{kitagawa} the authors proved strong convexity of the Hessian of the unregularized Kantorovich functional on the set $\mathcal S_c:=\{\psi\;|\;\rho(\text{Lag}_i(\psi))\geq c>0,\,\forall i=1,...,N\}$. It would have been natural then to prove the well-posedness of the ODE on the same set for some constant $c$. In order to do this one must ensure  that for every $t$ the vector $\psi(t)$, the solution to entropic problem, belongs actually to $\mathcal S_c$. However, this result does not hold in general. Consider a problem with ten target points in 2-d and a nonuniform target density $\mu\in\mathbb{R}_{>0}^{10}$. As illustrated in Figure \ref{fig:2d_10pnts}, the plot of the masses of the Laguerre cells ($y$-axis) as functions of \(t\) ($x$-axis) along the curve \(t\mapsto\psi(t)\)  shows that some masses may vanish for some \(t\in[0,1)\), even though none of the masses are zero at \(t=1\).
 
\end{rem}

\begin{figure}[ht]
    \centering
    \includegraphics[width=0.7\linewidth]{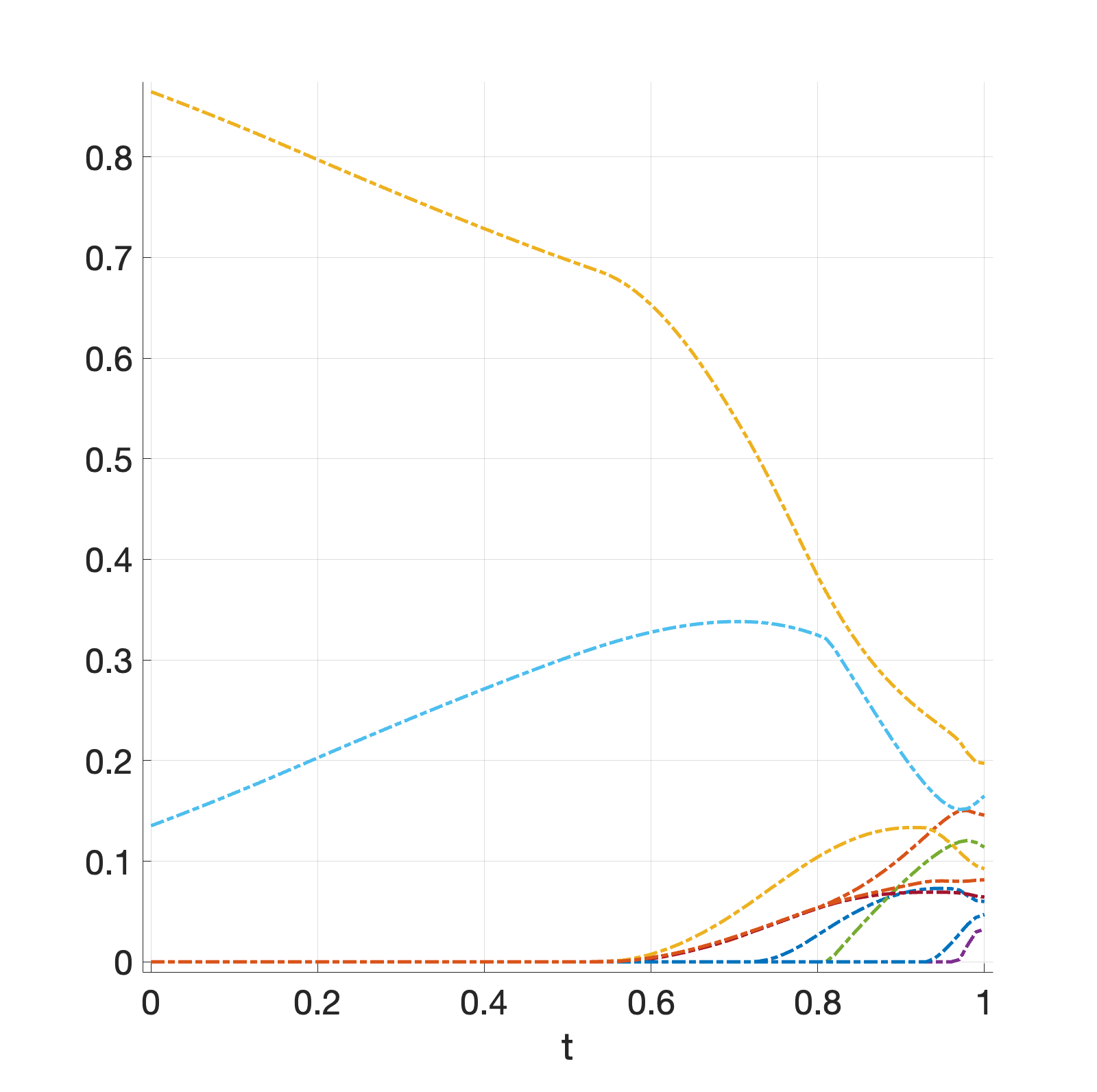}
    \caption{Mass of 10 Laguerre cells as a function of $t$ with $\dd \rho(x) = \dd x$ and $c(x,y) = ||x-y||_2^2$}
    \label{fig:2d_10pnts}
\end{figure}

\section{Computational Examples}\label{sec:compEx}
The initial value problem (IVP) to solve is:
\begin{gather}\label{ODE1}
    \nabla^2_{\psi\psi}\Phi(\psi(t),t)\psi'(t) + \frac{\partial}{\partial t}\nabla_\psi\Phi(\psi(t),t) = 0, \  t\in [0,1]\ ,
\end{gather}
where $\sum_{k=1}^N\psi_k(0) = 0$. Observe that the value of the initial condition can be directly determined from \eqref{eqn: gradient} by setting $t=0$ and applying the constraint $\sum_{k=1}^N\psi_k(0) = 0$. Subsequently, the initial value problem (IVP) \eqref{ODE1} can be reformulated as follows:
\begin{gather}\label{IVP}
    \psi'(t) = - [\nabla^{2,\perp}_{\psi\psi}\Phi(\psi(t),t)]^{-1}\frac{\partial}{\partial t}\nabla_\psi\Phi(\psi(t),t), \ \psi(0) = \log{\mu} - \frac{1}{N}\sum_{k=1}^N\log{\mu}, \ t\in [0,1]\ .
\end{gather}
For solving \eqref{IVP} in all examples presented below, we will use the Runge-Kutta method from Remark \ref{rem:RK} with parameters $\alpha = \frac{1}{8}$ and $\beta = \frac{1}{4}$. Furthermore, we will provide the value of the error in $\psi$: $\text{Error} = ||\psi(1) - \psi_{exact}||_{\infty}$, where $\psi_{exact}$ is the solution of the unregularized semi-discrete optimal transport problem, either determined exactly in one-dimensional examples or known a-priori for two-dimensional cases.

\begin{rem}[Third-Order Runge-Kutta Method]\label{rem:RK}
   We will use the following family of the third-order Runge-Kutta methods with $\alpha$ and $\beta$ parameters:
    \begin{gather*}
        k_1 = f(t_n, y_n)\ ,\ k_2 = f(t_n + c_2h, y_n + a_{21}hk_1)\ , \
        k_3 = f(t_n + c_3h, y_n + h(a_{31}k_1 + a_{32}k_2)\\
        \implies y_{n+1}=y_{n}+h{\bigl [}b_1k_1 + b_2k_2 + b_3k_3{\bigr ]}\ ,
    \end{gather*}
    where $\alpha, \beta\neq 0$, $\alpha\neq \beta$, $\alpha \neq \frac23$, and
    \begin{gather*}
        a_{21} = \alpha\ , \ a_{31} = \frac{\beta}{\alpha}\frac{\beta - 3\alpha(1-\alpha)}{3\alpha-2}\ , \ a_{32} = -\frac{\beta}{\alpha}\frac{\beta - \alpha}{3\alpha-2}\ ,\\
        b_1 = 1 - \frac{3\alpha + 3\beta - 2}{6\alpha\beta}\ , \ b_2 = \frac{3\beta-2}{6\alpha(\beta-\alpha)}\ , \ b_3 = \frac{2-3\alpha}{6\beta(\beta - \alpha)}\ ,\ c_1 = \alpha\ , \ c_2 = \beta\ .
    \end{gather*}
\end{rem}

\renewcommand\theequation{E\arabic{equation}} \setcounter{equation}{0}
\subsection{Problems in 1-d}
Consider the following problems on $X = [0, 1]$:
\begin{gather}\label{E1}
   \dd\rho(x) = \dd x\ ,\  y = \{0.25,\ 0.5,\ 0.75\}\ ,\  \mu = \icol{0.3\\ 0.4\\ 0.3} \ .
\end{gather}
\begin{gather}\label{E2}
    \dd\rho(x) = 1.8305e^{-10(x-0.5)^2}\dd x\ ,\  y = \{0.25,\ 0.5,\ 0.75\}\ ,\  \mu = \icol{0.3\\ 0.4\\ 0.3}\ .
\end{gather}
\begin{gather}\label{E3}
    \dd\rho(x) = \dd x\ ,\  y =\{-3.4584,\ -2.3668,\ 0.3374,\ 2.4005\}\ ,\ \mu =\icol{0.0078\\ 0.4920\\      0.4823\\ 0.0179}\ .
\end{gather}
In addition to solving the  IVP \eqref{IVP}, we will also solve the above problems using Newton's method for comparison reasons. The details of Newton's method are outlined in Remark \ref{rem:Newton1d}.

\begin{rem}[Newton's Method in 1-d]\label{rem:Newton1d}
    To solve the semi-discrete OT problem in one dimension with $c(x,y) = ||x-y||_2^p$, $p\geq 2$, one can perform the following Newton's iterations until convergence:
    \begin{gather*}
        \psi^{(k+1)} = \psi^{(k)} - [\nabla G(\psi^{(k)})]^{\dagger} G(\psi^{(k)})\ ,
    \end{gather*}
    where $[\nabla G]^{\dagger}$ represents the inverse of $\nabla G$ taken in the orthogonal complement of $\text{ker}(\nabla G) = \mathbf{1}$,
    \small
    \begin{gather*}
        \{G(\psi)\}_i = \mu_i - \rho(\text{Lag}_i(\psi)) = \mu_i - \int_{\text{Lag}_i(\psi)}\rho(x)dx\ ,\  i=1,2,\dots, N\ ,\\
        \{\nabla G(\psi)\}_{ij} = \int_{\text{Lag}_i(\psi)\cap \text{Lag}_j(\psi)}\frac{\rho(x)}{|\nabla_x c(x,y_i) - \nabla_x c(x,y_j)|}ds = \frac{\rho(x_{ij})}{|\nabla_x c(x_{ij},y_i) - \nabla_x c(x_{ij},y_j)|}\ ,\ i\neq j\ ,\\
        \{\nabla G(\psi)\}_{ii} = -\sum_{j=1,j\neq i}^N \{\nabla G(\psi^{(k)})\}_{ij} = -\sum_{j=1,j\neq i}^N\frac{\rho(x_{ij})}{|\nabla_x c(x_{ij},y_i) - \nabla_x c(x_{ij},y_j)|}\ ,\ i=1,2,\dots, N\ ,
    \end{gather*}
    \normalsize
    and $x_{ij} = \text{Lag}_i(\psi)\cup \text{Lag}_j(\psi)$. In all one-dimensional examples presented below, the iteration of Newton's method was terminated as soon as
    \[
    \lVert \psi^{(k)} - \psi_{\mathrm{exact}} \rVert_{\infty} < 10^{-11}
    \]
    was fulfilled.
\end{rem}

As demonstrated in Table \ref{tab:IVP_1d_2}, we discern a third-order convergence rate in $t$ when $c(x,y) = ||x-y||_2^2$, as anticipated. Nevertheless, the algorithm encounters a specific mesh size where it fails, attributed to integration inaccuracies. As $t\to 1$, a high-precision integrator becomes necessary because the integrand found in the derivative term and the Hessian matrix tends to resemble a delta function. For all our experiments, we utilized the \textbf{MATLAB} integration functions: \textit{integral} for 1-dimensional, \textit{integral2} for 2-dimensional, and \textit{integral3} for 3-dimensional problems. Additionally, Figure \ref{fig:Lag_1d} illustrates the progression of Laguerre cells as we near the standard unregularized semi-discrete optimal transport problem, revealing a non-monotonic evolution. 

\begin{table}[ht]
    \centering\setstretch{1.25}
    \begin{tabular}{||c||c|c|c||}\hline\hline
        $\Delta\mathbf{ t}$ & \eqref{E1} & \eqref{E2} & \eqref{E3}\\\hline\hline
        $10^{-1}$ & $1.3891*10^{- 3}$ & $3.1750*10^{- 3}$ & $1.3942*10^{-2}$ \\\hline
        $10^{-2}$ & $3.2996*10^{- 7}$ & $9.3938*10^{- 6}$ & $5.6056*10^{-4}$ \\\hline
        $10^{-3}$ & $3.9462*10^{-10}$ & $1.1194*10^{- 9}$ & $3.1528*10^{-8}$ \\\hline
        $10^{-4}$ & $6.5607*10^{-13}$ & $1.2871*10^{-12}$ & NAN\\\hline
        $10^{-5}$ & NAN & NAN & NAN \\\hline\hline
    \end{tabular}
    \caption{Error for IVP \eqref{IVP} solution with $c(x,y) = ||x-y||_2^2$}
    \label{tab:IVP_1d_2}
\end{table}

\begin{figure}[ht]
    \centering
    \begin{subfigure}{.475\textwidth}
        \centering
        \includegraphics[width=\linewidth]{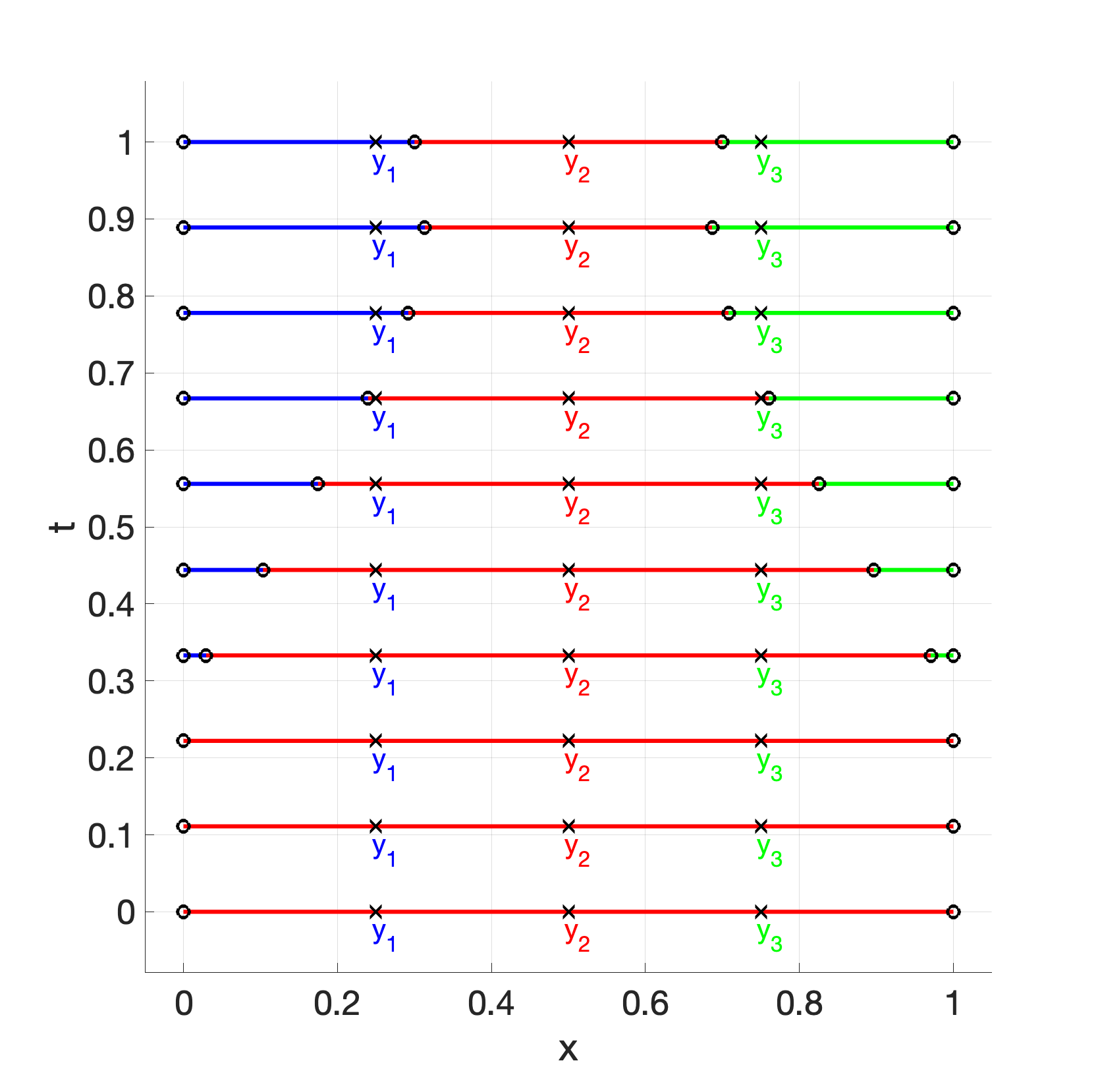}
        \caption{Example \eqref{E1}}
        \label{fig:Lag_1d_E1}
    \end{subfigure}
    \begin{subfigure}{.475\textwidth}
        \centering
        \includegraphics[width=\linewidth]{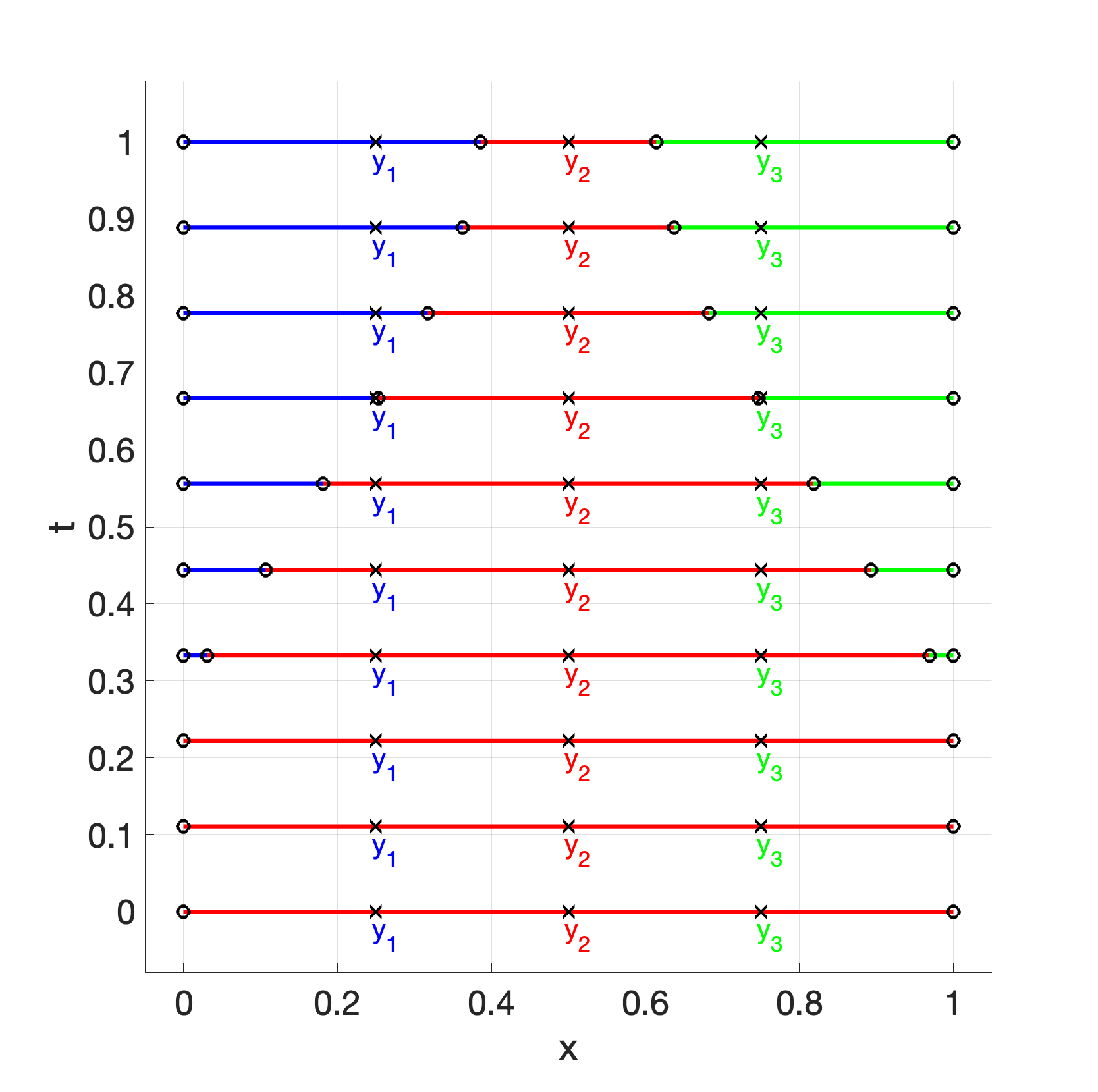}
        \caption{Example \eqref{E2}}
        \label{fig:Lag_1d_E2}
    \end{subfigure}
    \caption{Time evolution of Laguerre cells}
    \label{fig:Lag_1d}
\end{figure}

Regarding Newton's method, comparison with our approach in Table \ref{tab:Newton_1d_2} indicates similar error magnitudes in $\textbf{v}$. In addition, the computation time was of the same order as that of the initial value solver. However, Newton's method exhibits significant sensitivity to the initial conditions, leading to its failure in solving Example \eqref{E3} because the target points are outside of $X$, rendering the zero initial guess inadequate. 

\begin{table}[ht]
    \centering\setstretch{1.25}
    \begin{tabular}{||c||c|c|c||}\hline\hline
        \textbf{Initial Guess} & \eqref{E1} & \eqref{E2} & \eqref{E3}\\\hline\hline
        $10*\text{rand}(N,1)$   & $2.5647$          & $2.1454$          & NAN\\\hline
        $1*\text{rand}(N,1)$    & $3.5738*10^{-13}$ & $1.7328*10^{- 1}$ & NAN\\\hline
        $0.1*\text{rand}(N,1)$  & $5.2463*10^{-13}$ & $4.0957*10^{-13}$ & NAN\\\hline
        $0.01*\text{rand}(N,1)$ & $7.5187*10^{-13}$ & $1.8947*10^{-13}$ & NAN\\\hline
        $\mathbf{0}$            & $2.7284*10^{-13}$ & $1.8087*10^{-13}$ & NAN\\\hline\hline
    \end{tabular}
    \caption{Error for Newton's solution with $c(x,y) = ||x-y||_2^2$}
    \label{tab:Newton_1d_2}
\end{table}

Furthermore, Table \ref{tab:IVP_1d_3} presents the error convergence for $c(x,y)=||x-y||_2^3$. Here, we observe third-order convergence once again, albeit less consistently than with the squared Euclidean distance. In a parallel analysis in Table \ref{tab:Newton_1d_3}, we report the Newton's method performance under this cost. The impact of the initial guess is more pronounced; for instance, convergence to the solution in Example \eqref{E2} was only achieved when we started within $1\%$ deviation of the zero initial guess, whereas with $c(x,y)=||x-y||_2^2$, obtaining a solution was possible even with a $10\%$ perturbation.

\begin{table}[ht]
    \centering\setstretch{1.25}
    \begin{tabular}{||c||c|c|c||}\hline\hline
        $\Delta\mathbf{ t}$ & \eqref{E1} & \eqref{E2} & \eqref{E3}\\\hline\hline
        $10^{-1}$ & $7.8197*10^{- 3}$ & $7.4895*10^{- 4}$  & $7.6901*10^{-3}$ \\\hline
        $10^{-2}$ & $4.8828*10^{- 4}$ & $3.7807*10^{- 4}$  & $3.5379*10^{-6}$ \\\hline
        $10^{-3}$ & $1.7791*10^{- 8}$ & $2.3371*10^{- 7}$  & $3.5732*10^{-9}$ \\\hline
        $10^{-4}$ & $3.0193*10^{-11}$ & $4.1936*10^{-11}$  & NAN\\\hline
        $10^{-5}$ & NAN & NAN & NAN \\\hline\hline
    \end{tabular}
    \caption{Error for IVP \eqref{IVP} solution with $c(x,y) = ||x-y||_2^{3}$}
    \label{tab:IVP_1d_3}
\end{table}

\begin{table}[ht]
    \centering\setstretch{1.25}
    \begin{tabular}{||c||c|c|c||}\hline\hline
        \textbf{Initial Guess} & \eqref{E1} & \eqref{E2} & \eqref{E3}\\\hline\hline
        $10*\text{rand}(N,1)$   & $2.7753$          & $1.6201$          & NAN\\\hline
        $1*\text{rand}(N,1)$    & $1.9808*10^{- 1}$ & $4.5294*10^{- 1}$ & NAN\\\hline
        $0.1*\text{rand}(N,1)$  & $8.1341*10^{-13}$ & $2.3304*10^{- 1}$ & NAN\\\hline
        $0.01*\text{rand}(N,1)$ & $2.6465*10^{-13}$ & $5.6829*10^{-13}$ & NAN\\\hline
        $\mathbf{0}$            & $6.9658*10^{-13}$ & $2.9445*10^{-13}$ & NAN\\\hline\hline
    \end{tabular}
    \caption{Error for Newton's solution with $c(x,y) = ||x-y||_2^{3}$}
    \label{tab:Newton_1d_3}
\end{table}

\subsection{Problems in 2-d} Consider the following problems on $X = [0,1]\times [0, 1]$:
\begin{gather}\label{E4}
   \dd\rho(x) = \dd x_1 \dd x_2\ ,\  y = \big{\{}\icol{0\\0}, \icol{0\\1}, \icol{1\\1}\}\ ,\ \mu = \icol{\frac12(1-b)\\ b\\\frac12(1-b)}\ ,\ \psi_{exact} = \icol{\frac13(1-2\sqrt{b})\\-\frac23(1-2\sqrt{b})\\\frac13(1-2\sqrt{b})}\ ,
\end{gather}
where $0<b<1$,
\begin{gather}\label{E5}
    \dd\rho(x) = \dd x_1 \dd x_2\ ,\  y = \big{\{}\icol{0\\0}, \icol{0\\1}\}\ ,\ \mu = \icol{0.726759\\0.273241}\ ,\ \psi_{exact} = \icol{\frac14\ \\ -\frac14}\ ,
\end{gather}
\begin{gather}\label{E6}
    \dd\rho(x) = \dd x_1 \dd x_2\ ,\  y = \big{\{}\icol{0\\0}, \icol{0\\1}\}\ ,\ \mu = \icol{0.872066\\0.127934}\ ,\ \psi_{exact} = \icol{\frac12\ \\ -\frac12}\ .
\end{gather}

In the above examples, exact solutions are obtained for the cost function $c(x,y) = ||x-y||_2^2$ in Example \eqref{E4}, and for the cost function $c(x,y) = ||x-y||_2^4$ in Examples \eqref{E5} and \eqref{E6}. Analogously to the 1-d examples, we compare the solution of the IVP \eqref{IVP} with the results of Newton's method. When using the squared Euclidean distance as the cost, Laguerre cells can be efficiently determined using a lifting algorithm (refer to \cite{levy}). Subsequently, we calculate the Jacobian matrix using the Centered Finite Difference scheme. However, for $c(x,y) = ||x-y||_2^p$ with $p>2$, there is no efficient computational algorithm. Consequently, we approximate the Laguerre cell on a fixed mesh and again compute the Jacobian matrix using the Centered Finite Difference scheme. 

In Table \ref{tab:IVP_2d}, a third-order convergence is apparent in all examples except Example \eqref{E6}. Additionally, we note that precise integration becomes increasingly critical as the method fails even with a mesh size of $10^{-3}$, unlike the one-dimensional cases. Furthermore, Figure \ref{fig:Lag_2d_E4} illustrates the temporal progression of Laguerre partitions as $t\to 1$, exhibiting similar non-monotonic behavior as seen in the one-dimensional scenario.

\begin{table}[ht]
    \centering\setstretch{1.25}
    \begin{tabular}{||c||c|c|c|c||}\hline\hline
        $\Delta\mathbf{ t}$ & \eqref{E4} with $b=0.5$ & \eqref{E4} with $b=0.1$ & \eqref{E5}  & \eqref{E6}\\\hline\hline
        $10^{-1}$ & \makecell{$9.5043*10^{-4}$\\ $ 0.41$ sec.} & \makecell{$4.2964*10^{-4}$\\ $ 0.41$ sec.}  & \makecell{$2.8436*10^{-4}$\\ $ 0.72$ sec.} & \makecell{$3.8182*10^{-4}$\\ $ 0.70$ sec.} \\\hline
        $10^{-2}$ & \makecell{$1.9683*10^{-8}$\\ $10.13$ sec.} & \makecell{$2.1324*10^{-7}$\\ $12.46$ sec.}  & \makecell{$7.9821*10^{-8}$\\ $16.60$ sec.} & \makecell{$1.3475*10^{-5}$\\ $14.88$ sec.} \\\hline
        $10^{-3}$ & NAN & NAN  & NAN & NAN \\\hline\hline
    \end{tabular}
    \caption{Error for IVP \eqref{IVP} solution}
    \label{tab:IVP_2d}
\end{table}

\begin{figure}[ht]
    \centering
    \begin{subfigure}{.19\textwidth}
        \centering
        \includegraphics[width=\linewidth]{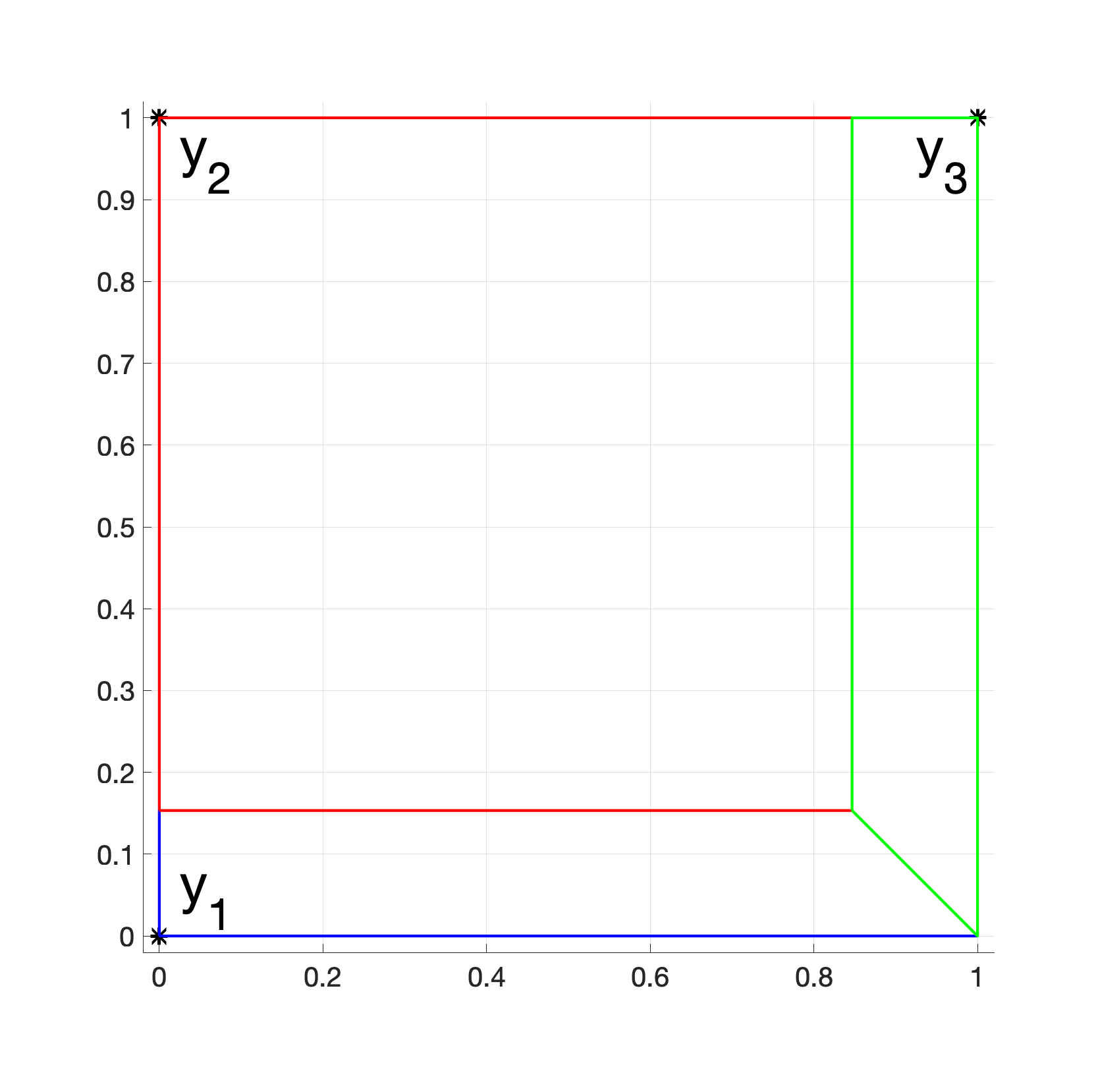}
        \caption{$t=0$}
        \label{fig:Lag_2d_E4_1}
    \end{subfigure}
    \begin{subfigure}{.19\textwidth}
        \centering
        \includegraphics[width=\linewidth]{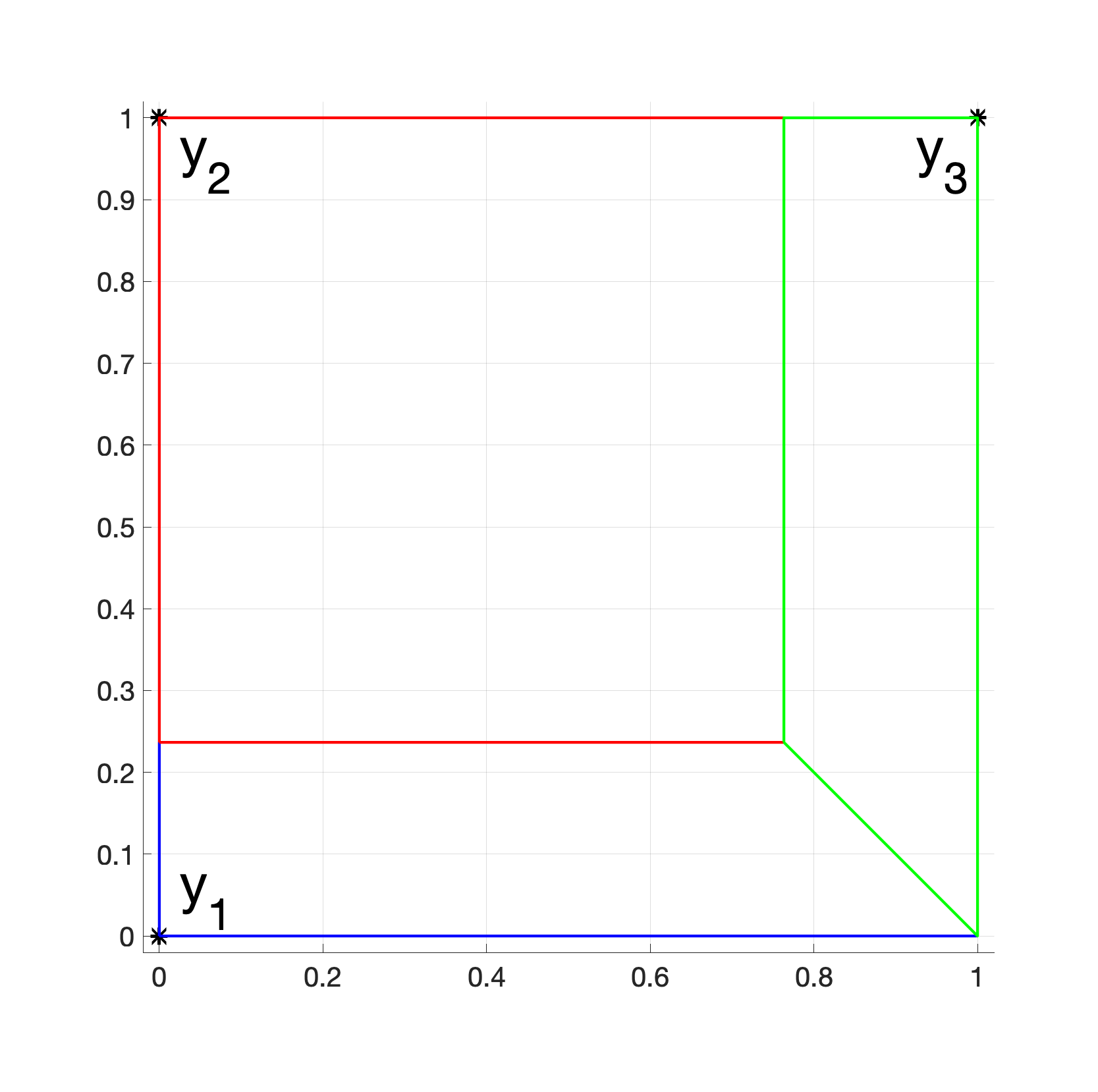}
        \caption{$t=0.25$}
        \label{fig:Lag_2d_E4_2}
    \end{subfigure}
    \begin{subfigure}{.19\textwidth}
        \centering
        \includegraphics[width=\linewidth]{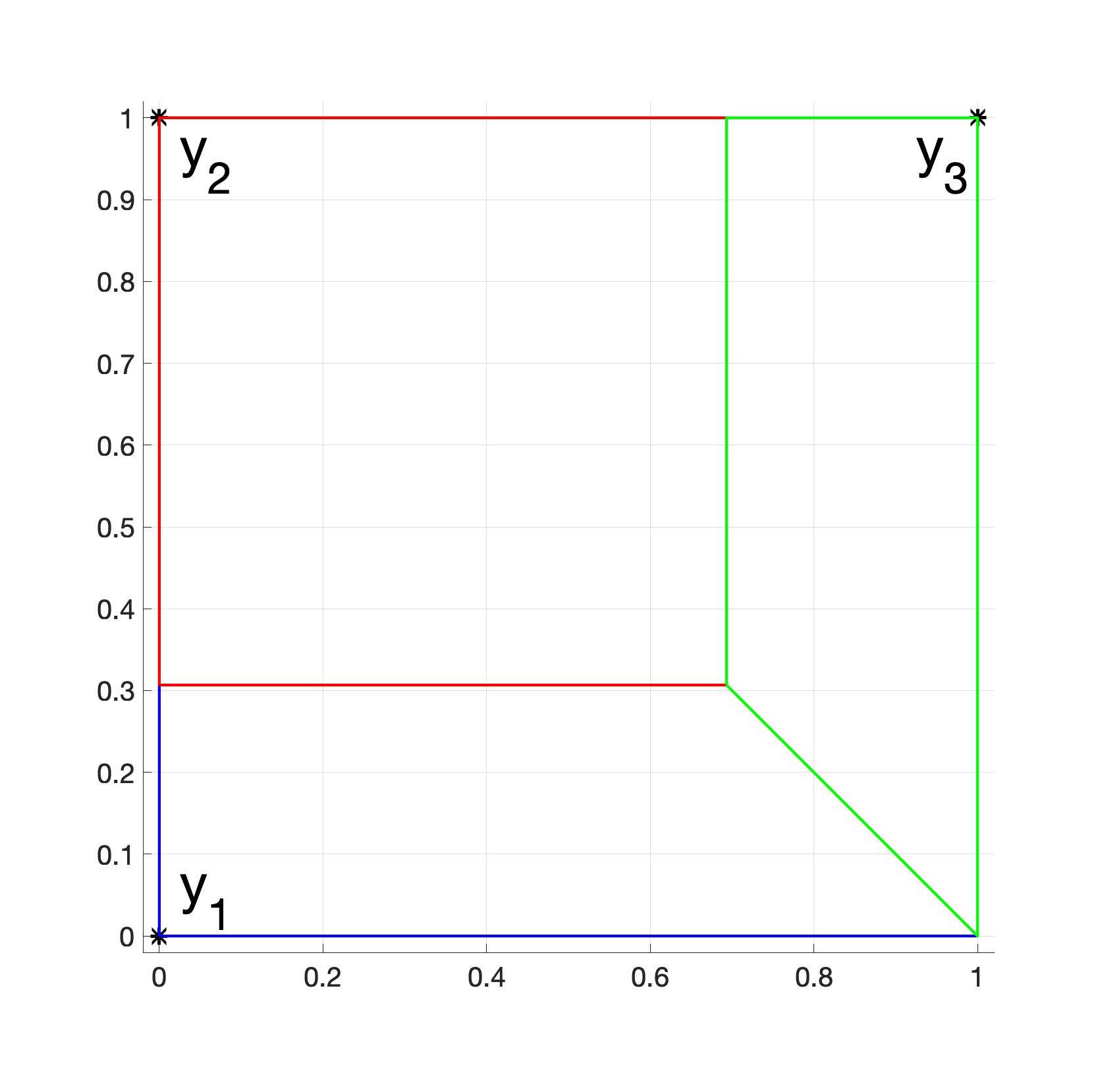}
        \caption{$t=0.5$}
        \label{fig:Lag_2d_E4_3}
    \end{subfigure}
    \begin{subfigure}{.19\textwidth}
        \centering
        \includegraphics[width=\linewidth]{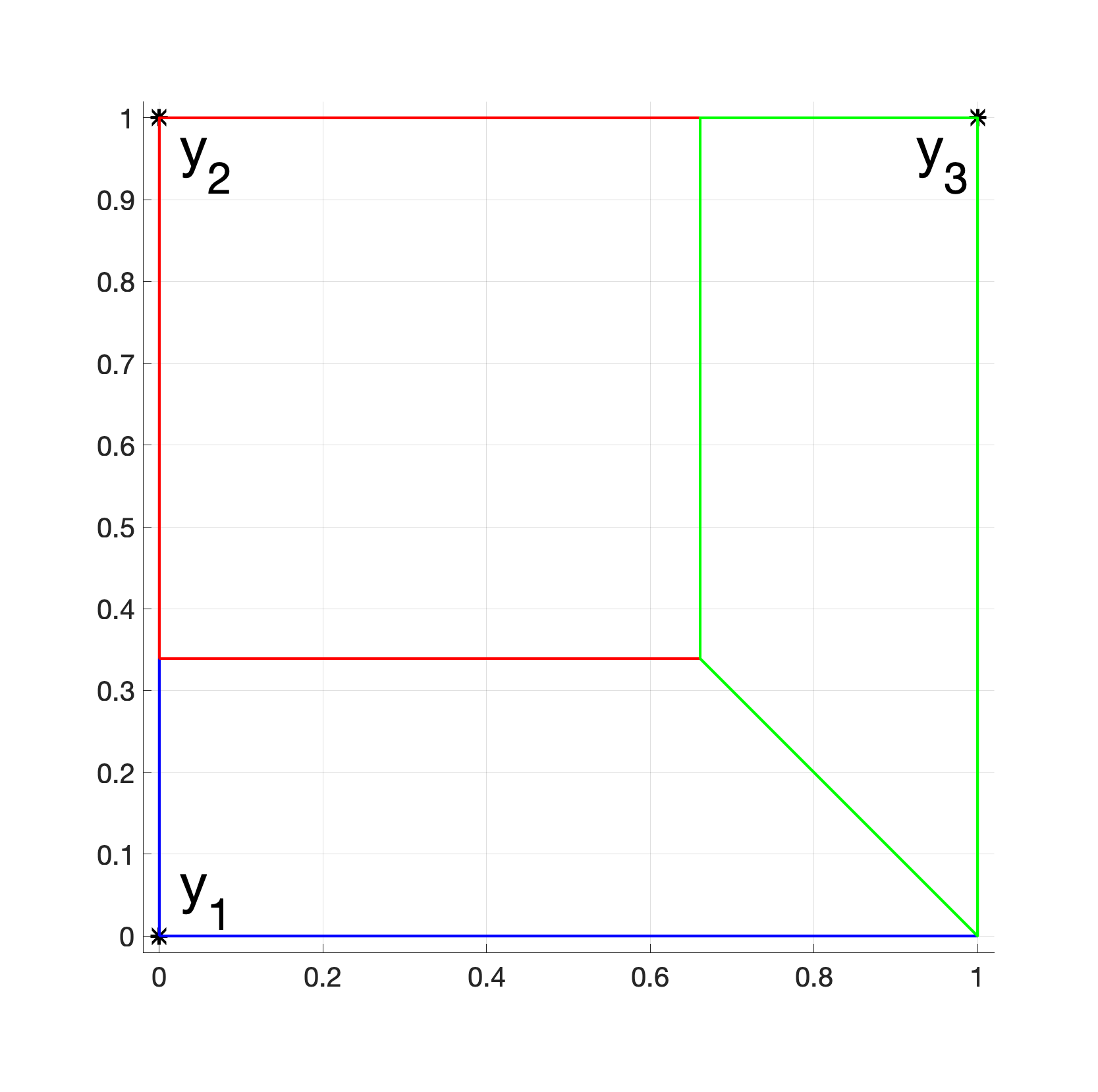}
        \caption{$t=0.75$}
        \label{fig:Lag_2d_E4_4}
    \end{subfigure}
    \begin{subfigure}{.19\textwidth}
        \centering
        \includegraphics[width=\linewidth]{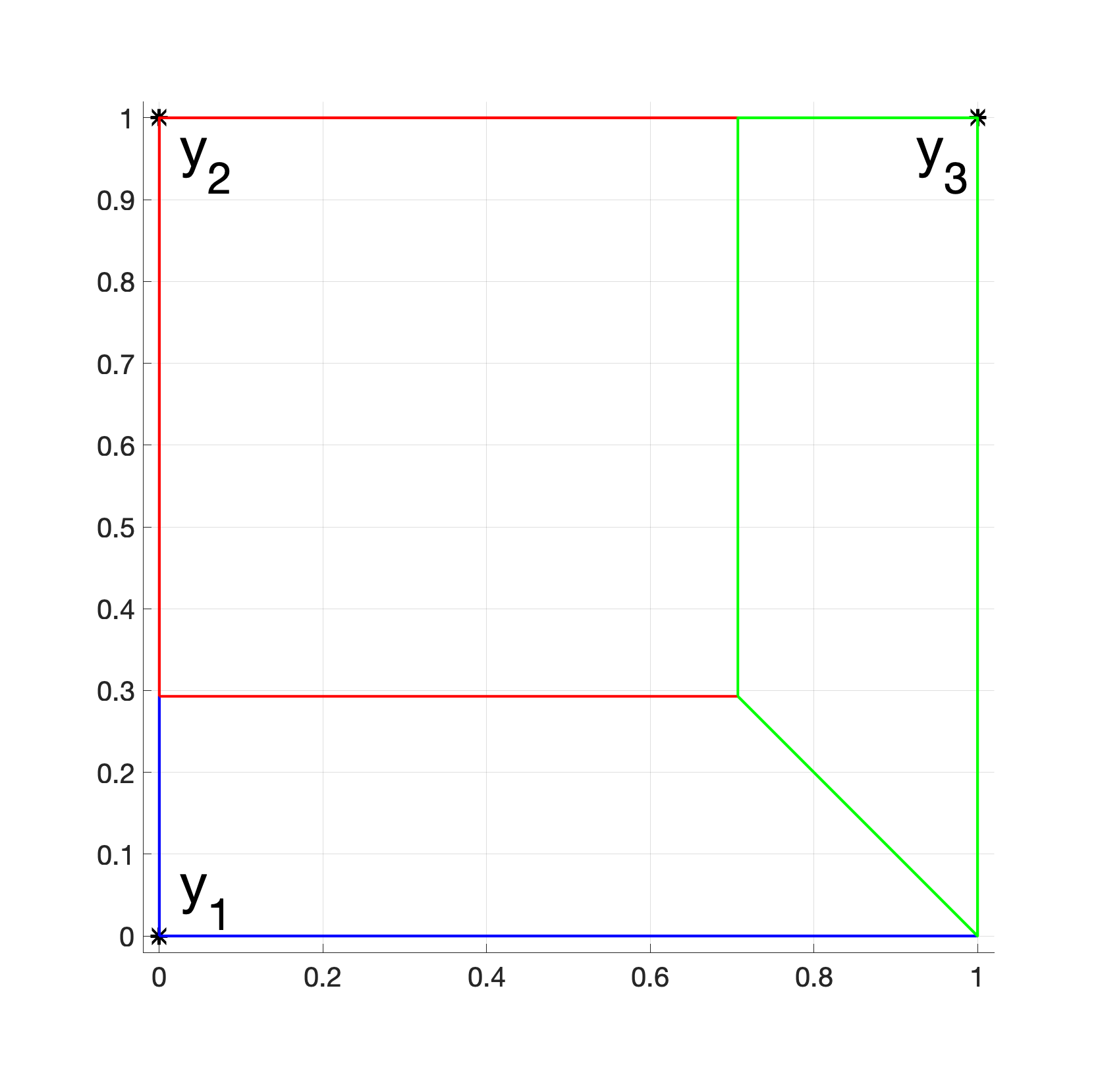}
        \caption{$t=1$}
        \label{fig:Lag_2d_E4_5}
    \end{subfigure}
    \caption{Time Evolution of Laguerre cells in Example \eqref{E4} with $b=0.5$}
    \label{fig:Lag_2d_E4}
\end{figure}

Next, we addressed the two-dimensional problems using Newton's method. In these examples, Newton's method was terminated once 
\(\lVert \psi^{(k)} - \psi_{\mathrm{exact}} \rVert_{\infty} < 10^{-7}\). As demonstrated in Table \ref{tab:Newton_2d}, the performance of Newton's method in terms of accuracy and time is comparable to our approach. Nonetheless, the importance of an initial guess becomes crucial in two-dimensional problems, as convergence to a solution in Example \eqref{E6} is not achieved. Overall, it is feasible to incorporate both methods by using the ODE solution as the initial guess for Newton's method.

Finally, to demonstrate robustness of ODE solver as the number of points increases. Figure \ref{fig:Meas_2d} illustrates the convergence of the Laguerre cell measure to  $\mu$ over $t$. In this problem, target points are randomly distributed within the unit square, both source and target measures are uniform, the cost function is the squared Euclidean distance, and the time step is $\Delta t = 10^{-2}$. The exact potential $\psi_{\text{exact}}$ is unknown; however, we can evaluate the measure error $||\rho(\text{Lag}(\psi(1))) - \mu||_{\infty}$. For instance, this error is $4.6587 \times 10^{-4}$ for 10 random target points and $1.4668 \times 10^{-3}$ for 25 target points. Additionally, Figures \ref{fig:Lag_2d_10pnts} and \ref{fig:Lag_2d_25pnts} illustrate the evolution of Laguerre cells with respect to $t$.  

\begin{table}[ht]
    \centering\setstretch{1.4}
    \begin{tabular}{||c||c|c|c|c||}\hline\hline
        \textbf{Initial Guess} & \eqref{E4} with $b=0.5$ & \eqref{E4} with $b=0.1$ & \eqref{E5}  & \eqref{E6}\\\hline\hline
        $1*\text{rand}(N,1)$    & \makecell{$1.9160*10^{-8}$\\ $7.89$ sec.} & \makecell{$2.1022*10^{-8}$\\ $7.36$ sec.} & \makecell{$1.9524*10^{-5}$\\ $15.00$ sec.} & NAN \\\hline  
        $0.1*\text{rand}(N,1)$  & \makecell{$1.3244*10^{-8}$\\ $6.97$ sec.} & \makecell{$1.5153*10^{-8}$\\ $6.66$ sec.} & NAN & NAN \\\hline
        $0.01*\text{rand}(N,1)$ & \makecell{$7.5942*10^{-9}$\\ $7.10$ sec.} & \makecell{$1.1396*10^{-8}$\\ $6.86$ sec.} & NAN & NAN \\\hline
        $\mathbf{0}$            & \makecell{$5.7658*10^{-9}$\\ $6.70$ sec.} & \makecell{$2.1945*10^{-8}$\\ $8.05$ sec.} & NAN & NAN \\\hline\hline
    \end{tabular}
    \caption{Error for Newton's solution}
    \label{tab:Newton_2d}
\end{table}

\begin{figure}[ht]
    \centering
    \begin{subfigure}{.49\textwidth}
        \centering
        \includegraphics[width=\linewidth]{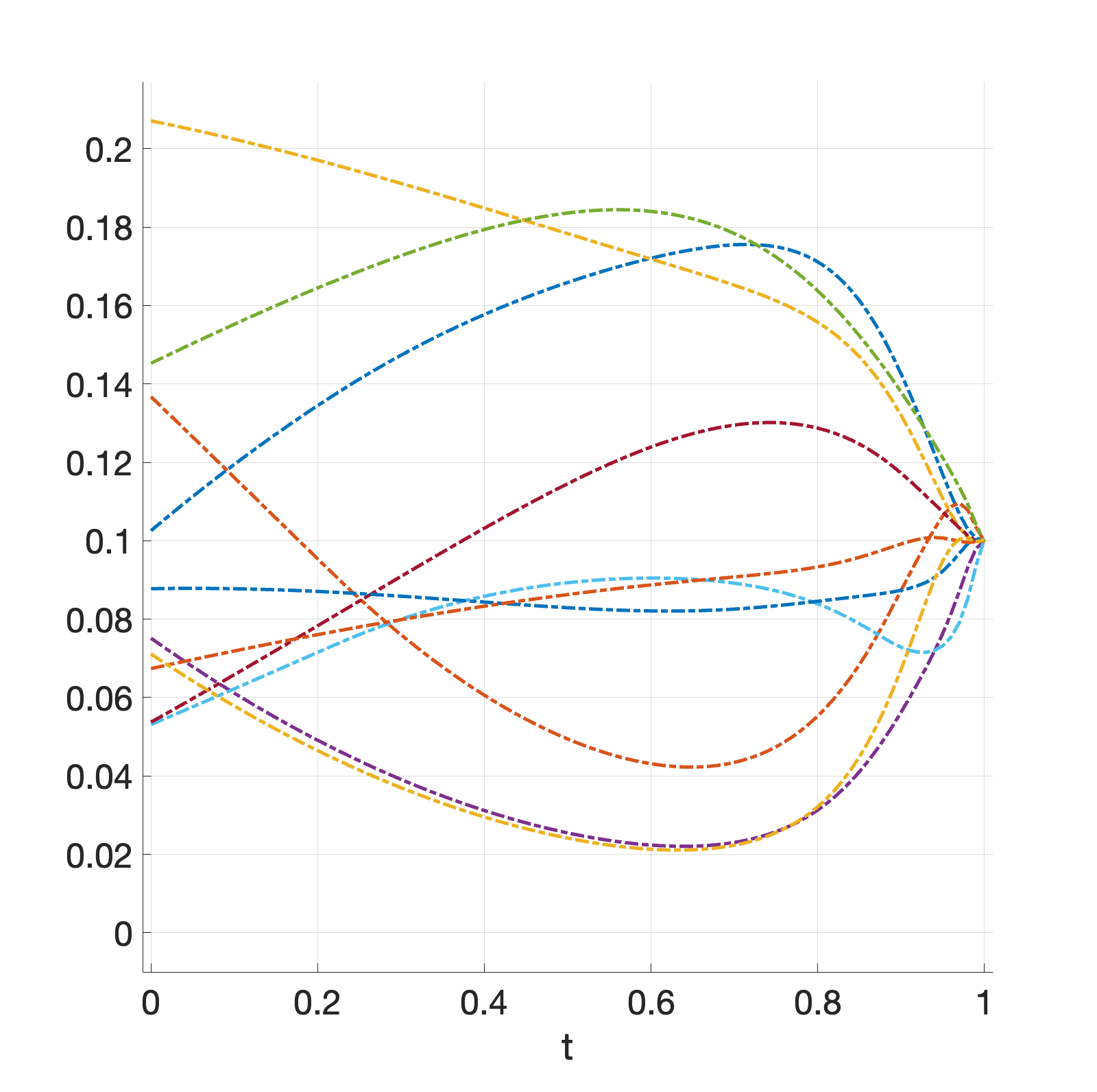}
        \caption{10 points}
        \label{fig:Meas_2d_10pnts}
    \end{subfigure}
    \begin{subfigure}{.49\textwidth}
        \centering
        \includegraphics[width=\linewidth]{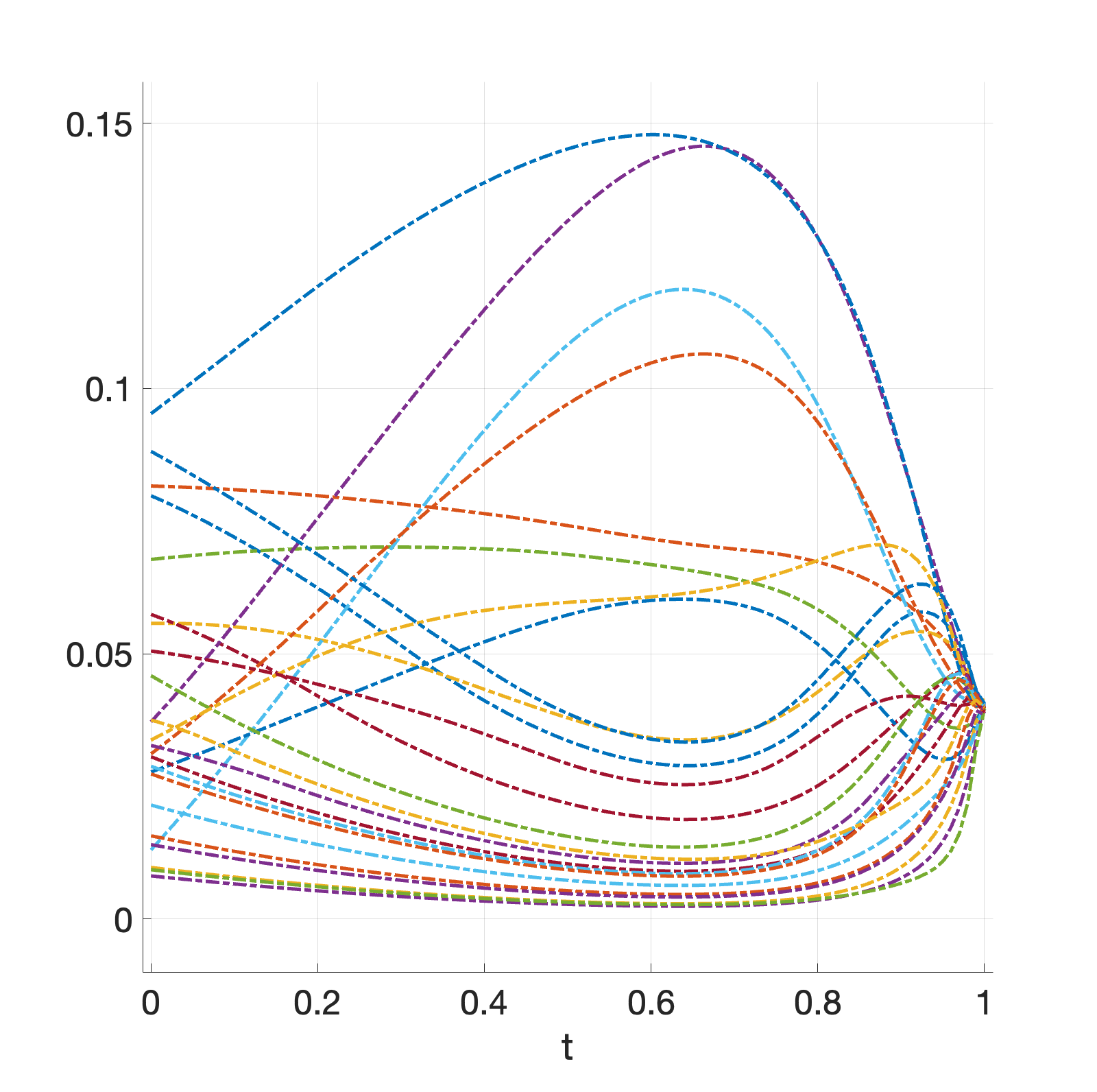}
        \caption{25 points}
        \label{fig:Meas_2d_25pnts}
    \end{subfigure}
    \caption{Time evolution of measures}
    \label{fig:Meas_2d}
\end{figure}

\begin{figure}[ht]
    \centering
    \begin{subfigure}{.19\textwidth}
        \centering
        \includegraphics[width=\linewidth]{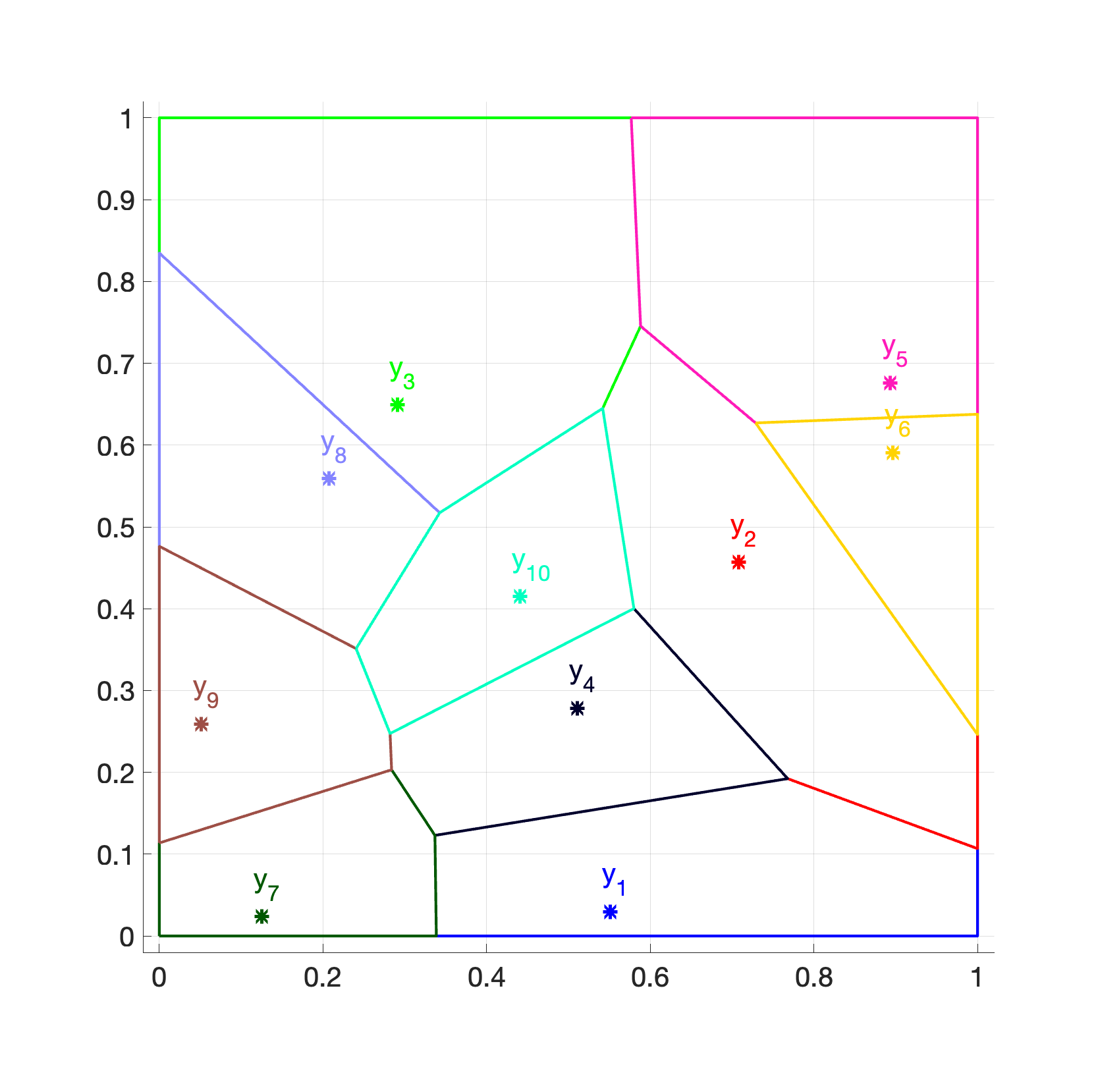}
        \caption{$t=0$}
        \label{fig:2d_10pnts_1}
    \end{subfigure}
    \begin{subfigure}{.19\textwidth}
        \centering
        \includegraphics[width=\linewidth]{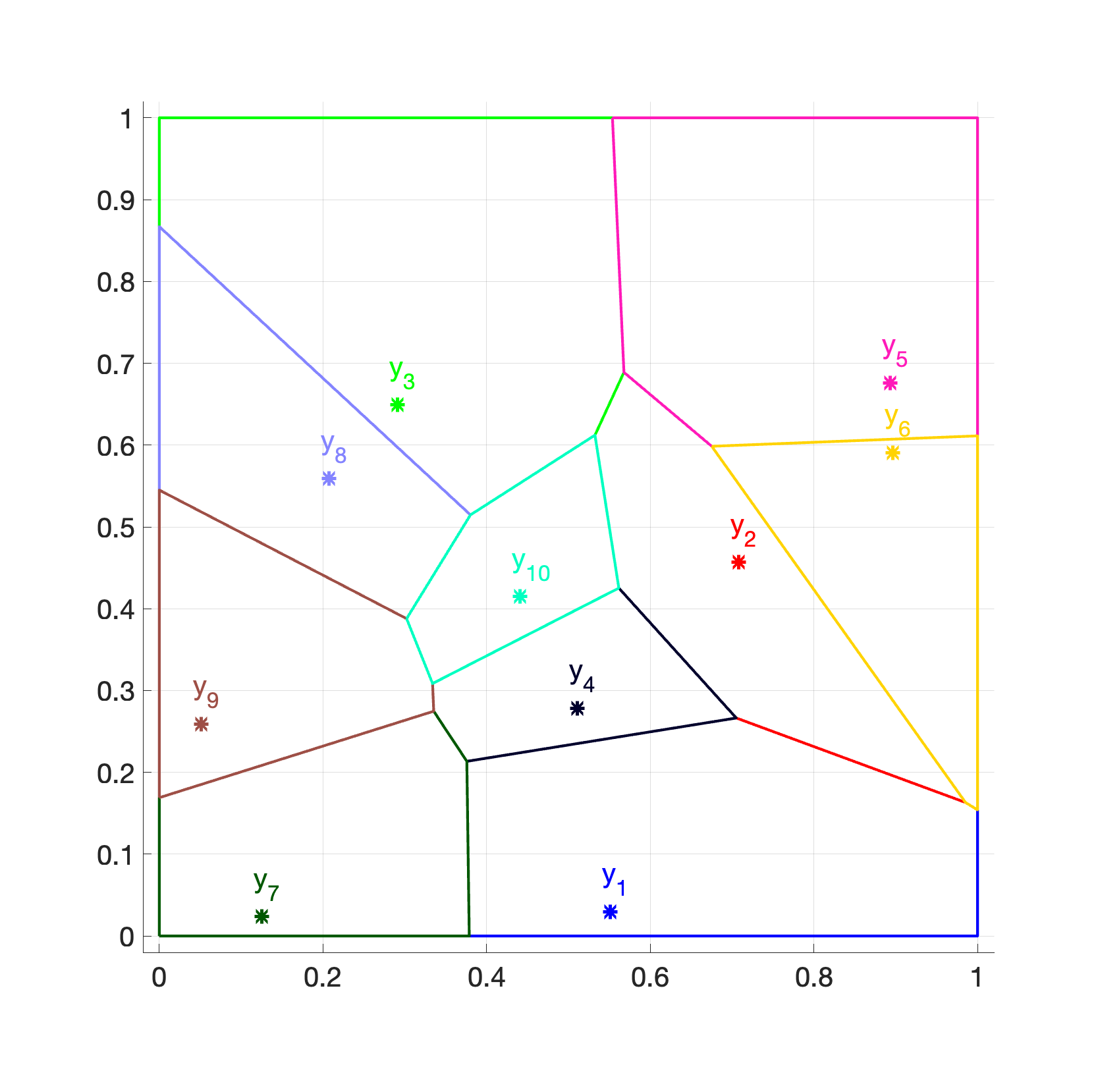}
        \caption{$t=0.25$}
        \label{fig:2d_10pnts_2}
    \end{subfigure}
    \begin{subfigure}{.19\textwidth}
        \centering
        \includegraphics[width=\linewidth]{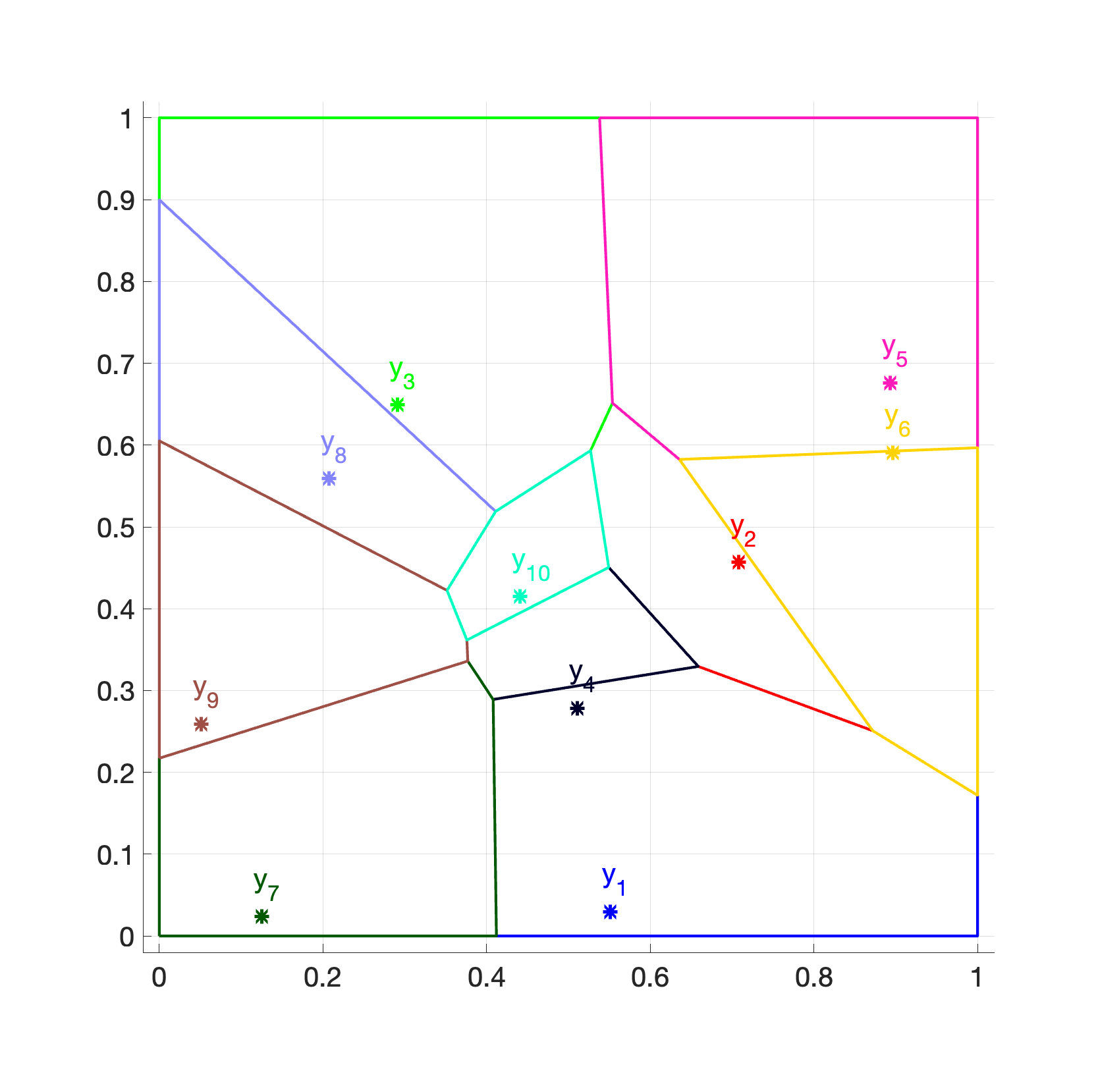}
        \caption{$t=0.5$}
        \label{fig:2d_10pnts_3}
    \end{subfigure}
    \begin{subfigure}{.19\textwidth}
        \centering
        \includegraphics[width=\linewidth]{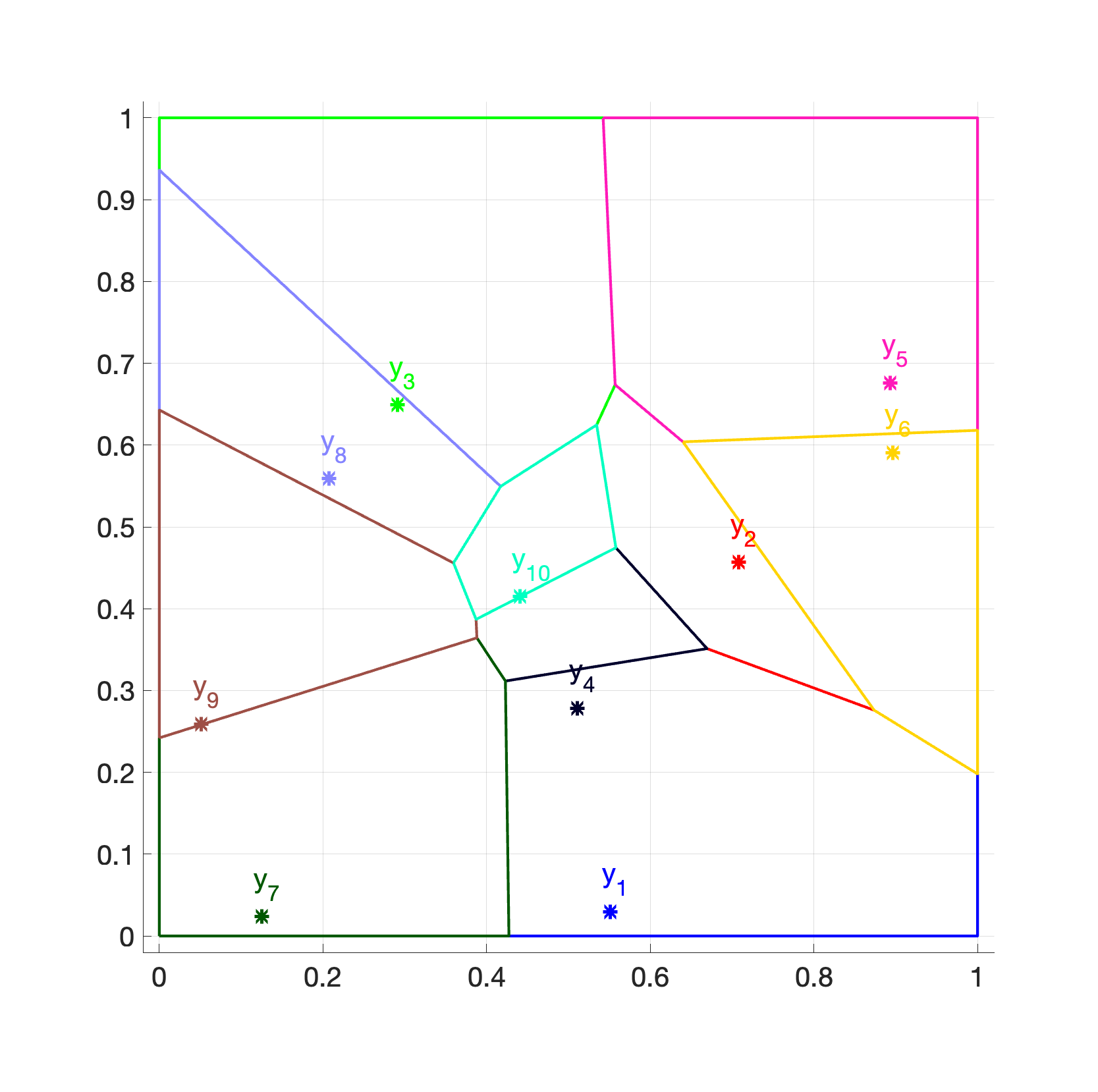}
        \caption{$t=0.75$}
        \label{fig:2d_10pnts_4}
    \end{subfigure}
    \begin{subfigure}{.19\textwidth}
        \centering
        \includegraphics[width=\linewidth]{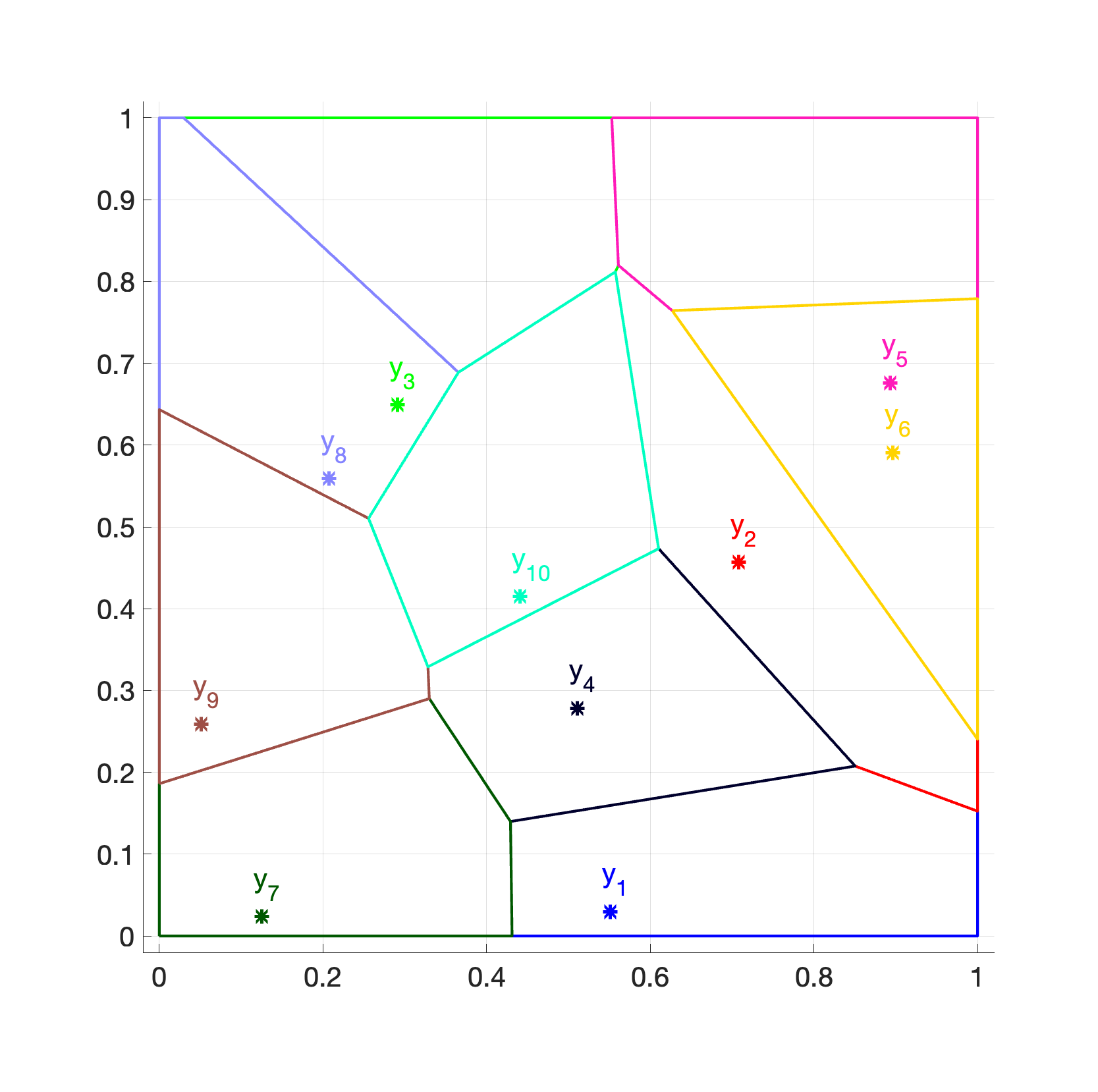}
        \caption{$t=1$}
        \label{fig:2d_10pnts_5}
    \end{subfigure}
    \caption{Time evolution of Laguerre cells with 10 random points}
    \label{fig:Lag_2d_10pnts}
\end{figure}

\begin{figure}[ht]
    \centering
    \begin{subfigure}{.19\textwidth}
        \centering
        \includegraphics[width=\linewidth]{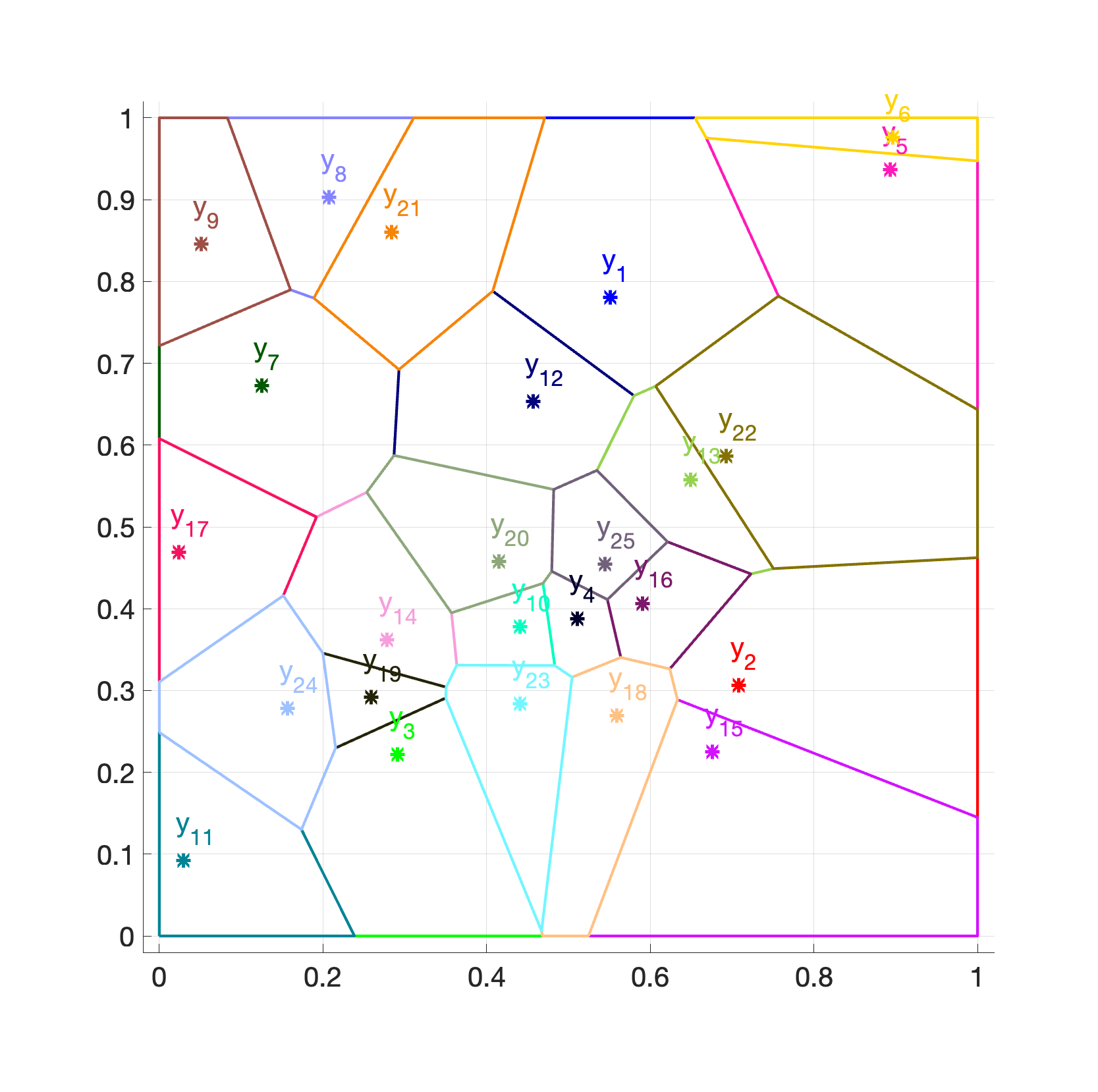}
        \caption{$t=0$}
        \label{fig:2d_25pnts_1}
    \end{subfigure}
    \begin{subfigure}{.19\textwidth}
        \centering
        \includegraphics[width=\linewidth]{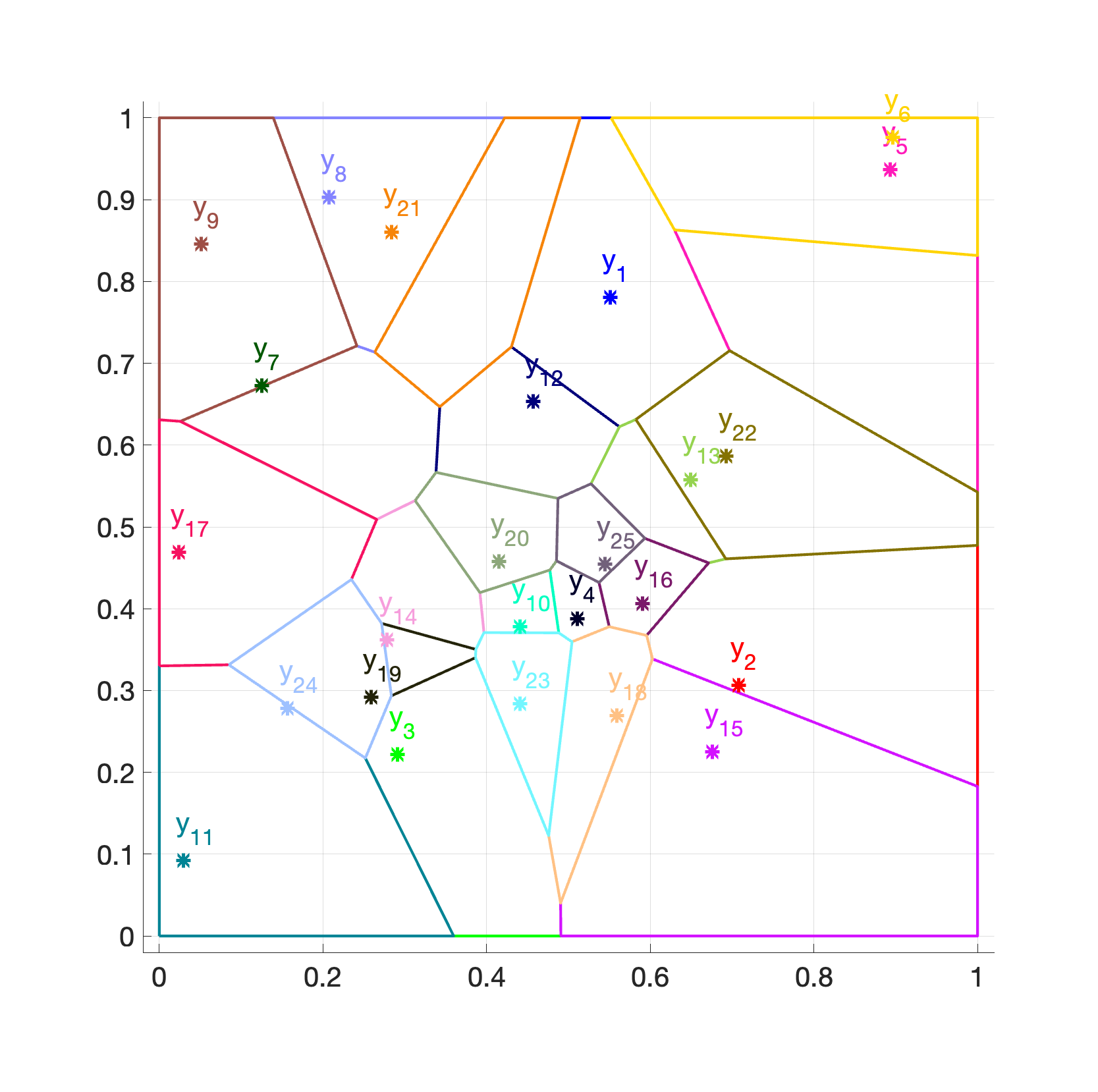}
        \caption{$t=0.25$}
        \label{fig:2d_25pnts_2}
    \end{subfigure}
    \begin{subfigure}{.19\textwidth}
        \centering
        \includegraphics[width=\linewidth]{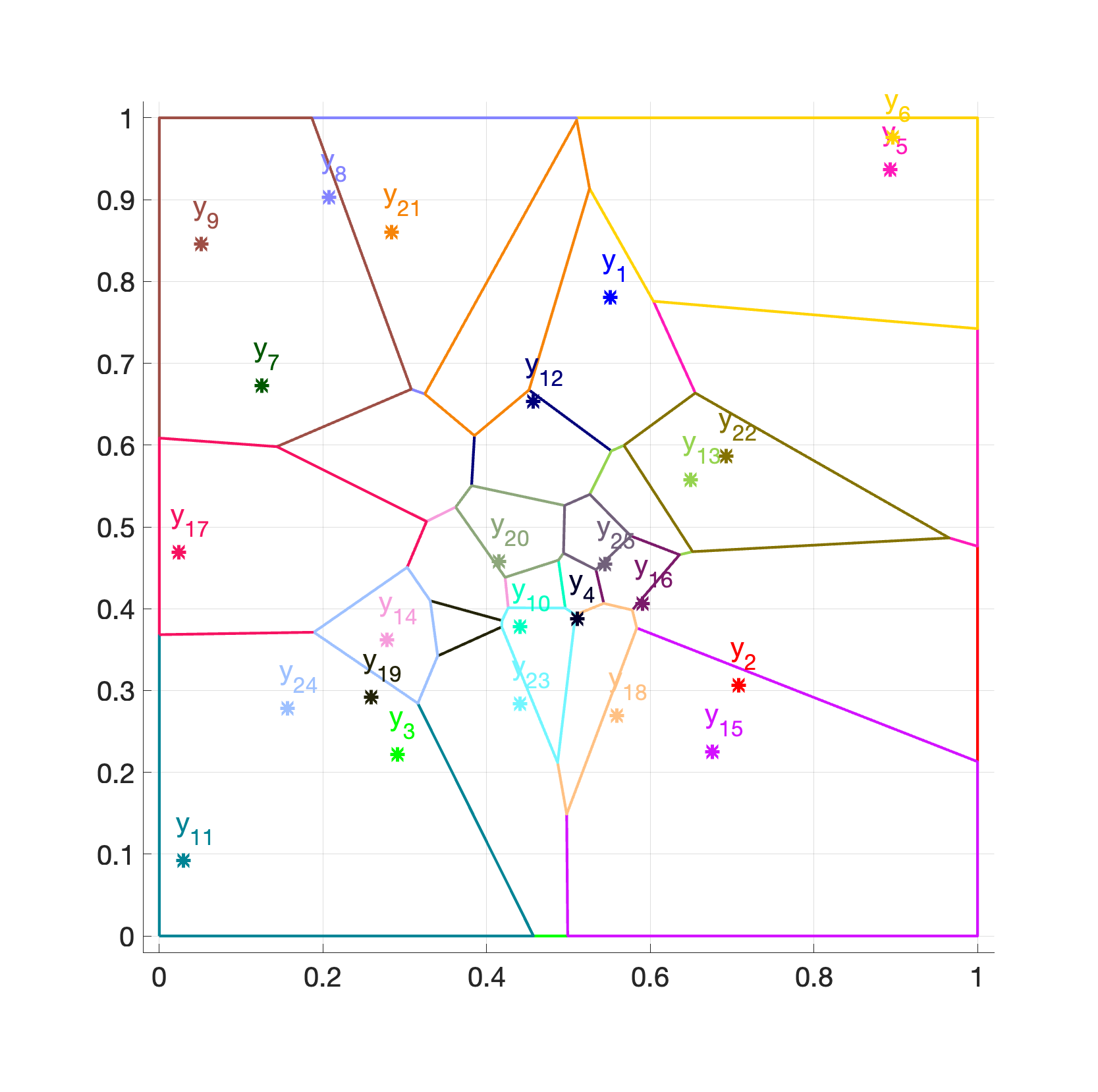}
        \caption{$t=0.5$}
        \label{fig:2d_25pnts_3}
    \end{subfigure}
    \begin{subfigure}{.19\textwidth}
        \centering
        \includegraphics[width=\linewidth]{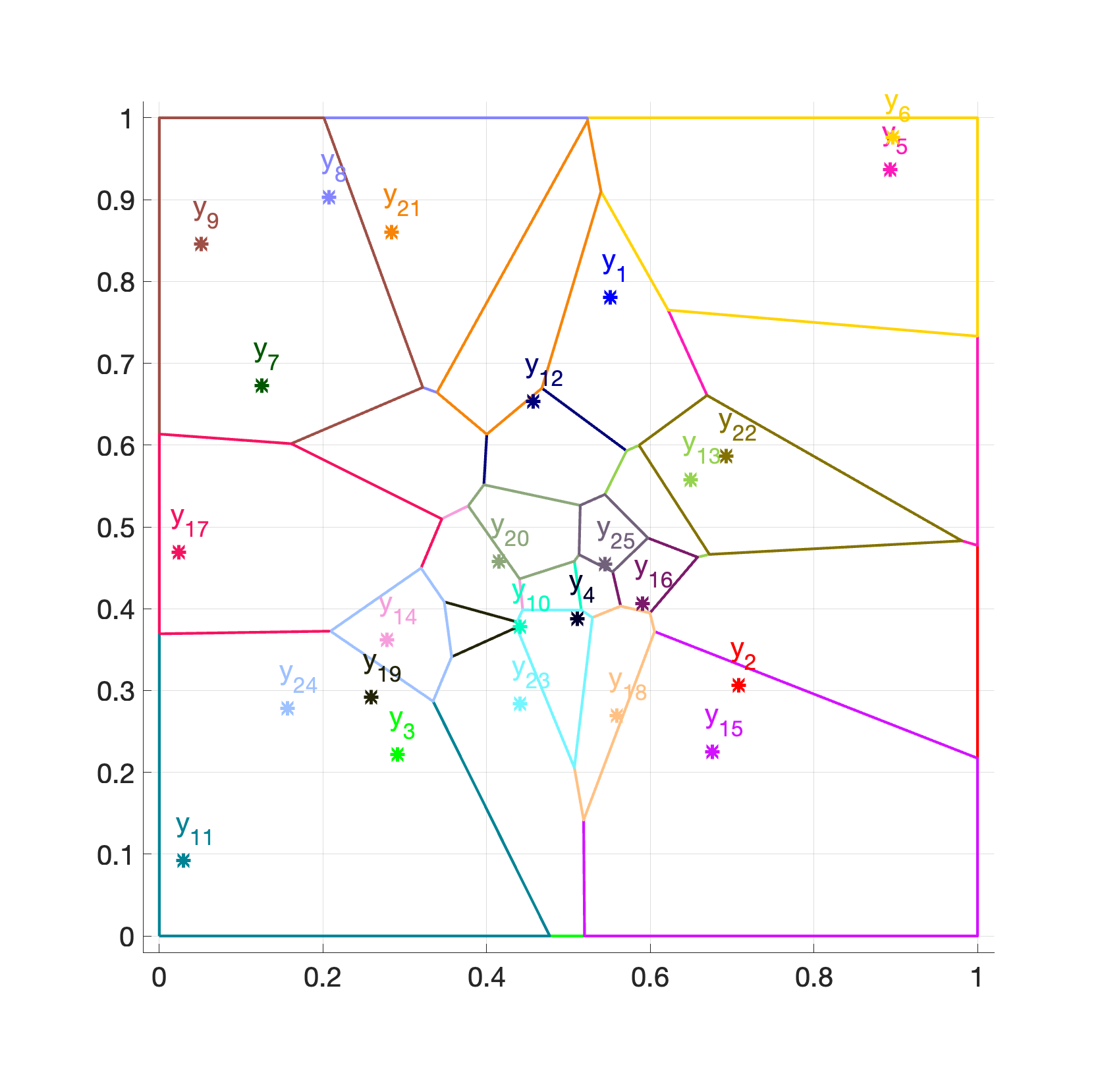}
        \caption{$t=0.75$}
        \label{fig:2d_25pnts_4}
    \end{subfigure}
    \begin{subfigure}{.19\textwidth}
        \centering
        \includegraphics[width=\linewidth]{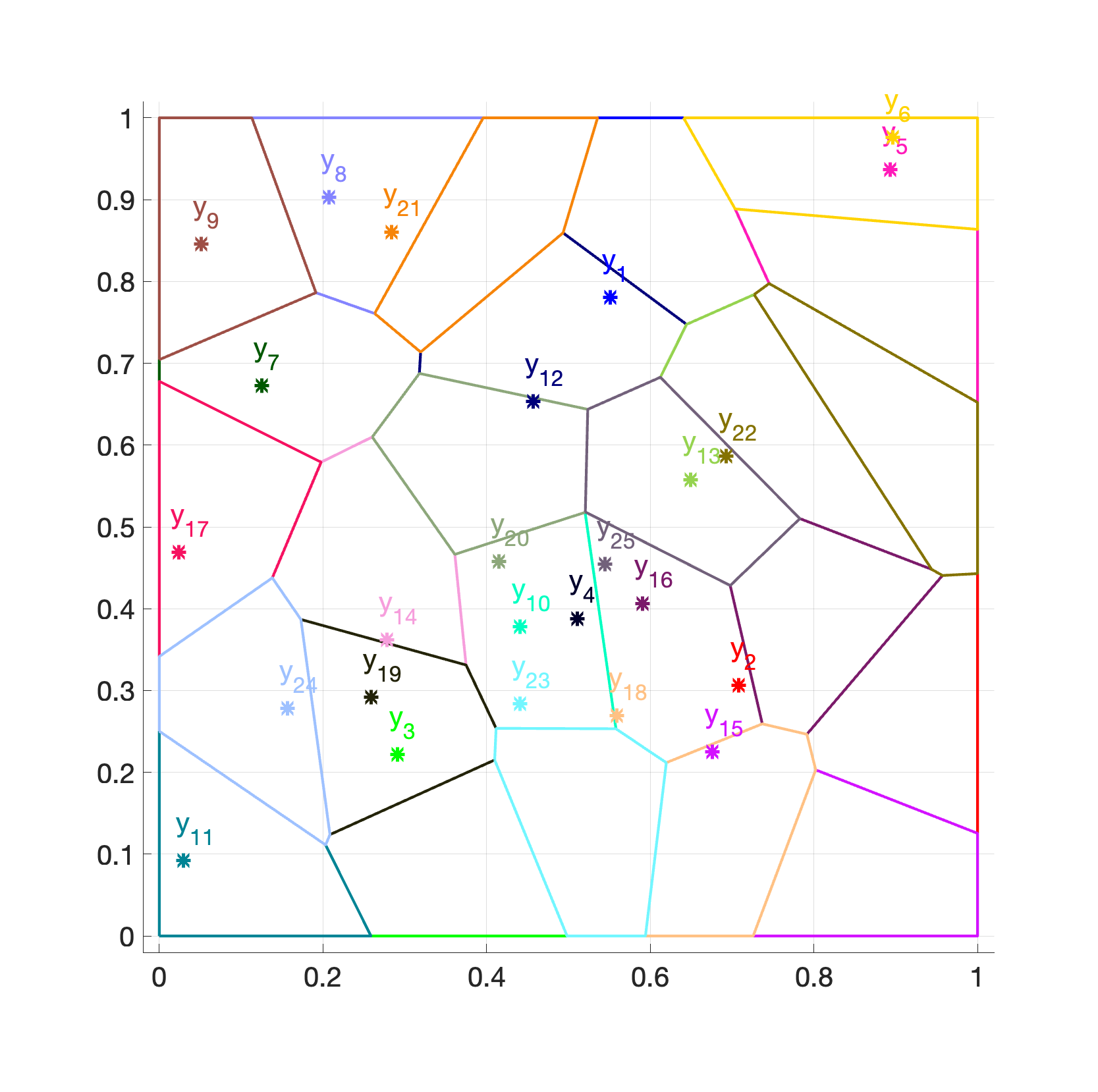}
        \caption{$t=1$}
        \label{fig:2d_25pnts_5}
    \end{subfigure}
    \caption{Time evolution of Laguerre cells with 10 random points}
    \label{fig:Lag_2d_25pnts}
\end{figure}

\subsection{Problems in 3-d}
This section delves into evaluating the effectiveness of the ODE solver in a three-dimensional context, comparing it to Newton's method, and examining the impact of dimensionality on computation time. Consider the following problem on $X = [0,1]\times[0,1]\times[0,1]$ with $c(x,y) = ||x-y||_2^2$, and
\begin{gather}\label{E7}
    \dd\rho(x) = \dd x_1\dd x_2\dd x_3\ ,\  y = \big{\{}\icol{0.5508\\0.8963\\0.0299}, \icol{0.7081\\0.1256\\0.4568}, \icol{0.2909\\0.2072\\0.6491}, \icol{0.5108\\0.0515\\0.2785}, \icol{0.8929\\0.4408\\0.6763}\}\ ,\ \mu = \frac{1}{5}\mathbf{1}\ .
\end{gather}

For non-trivial 3-d problems, the exact solution $\psi_{exact}$ is typically not available, rendering the calculation of $\text{Error} = ||\psi(1) - \psi_{exact}||_{\infty}$ infeasible. Nevertheless, it remains possible to compute the error in measure: $\text{Measure Error} = ||\rho(\text{Lag}(\psi(1))) - \mu||_{\infty}$. In this setting, the Newton iteration was terminated once the measure error satisfied 
\(< 10^{-3}\). Tables~\ref{tab:IVP_3d} and \ref{tab:Newton_3d} report the performance of the ODE method and Newton's method, respectively, indicating that the final errors are of comparable magnitude. As indicated in Table \ref{tab:IVP_3d}, there is first-order convergence towards $\mu$, suggesting second-order convergence in $\psi$ due to the integration. Moreover, the comparison of computation times reveals an order-of-magnitude difference, with Newton's method substantially outperforming the initial value solver. 
This outcome is expected, as the initial value solver additionally provides information on the evolution of the Laguerre cells from the entropy solution to the solution of the original problem (see Figure \ref{fig:Lag_3d_E7}), which is not obtained from Newton's method.

Finally, Figure \ref{fig:Time_Comparison} provides a comparative analysis of computational time across different dimensions for the ODE solution. In this scenario, the \textit{3-d Example} pertains to \eqref{E7}, while the \textit{1-d Example} and \textit{2-d Example} relate to analogous problems on the unit line and unit square, respectively: five random target points, uniform target and source measures, squared Euclidean cost, and 100 time-steps are used. It is observed that calculation time escalates exponentially with increasing dimensions. Additionally, there's a rise in computation time as $t$ nears the value of 1 in every dimension. This is attributed to the internal numerical integration process where the integrand approaches a delta function, necessitating finer spatial discretization as $t$ converges to 1.

\begin{table}[ht]
    \begin{minipage}{.5\linewidth}
        \centering\setstretch{1.25}
        \begin{tabular}{||c||c||}\hline\hline
            $\Delta\mathbf{ t}$ & Measure Error \\\hline\hline
            $10^{-1}$ & \makecell{$6.4267*10^{-3}$\\ $70.151$ sec.} \\\hline
            $10^{-2}$ & \makecell{$1.3377*10^{-3}$\\ $3348.7$ sec.} \\\hline
            $10^{-3}$ & NAN  \\\hline\hline
        \end{tabular}
        \caption{IVP \eqref{IVP} solution of Example \eqref{E7}}
        \label{tab:IVP_3d}
    \end{minipage}%
    \begin{minipage}{.5\linewidth}
        \centering\setstretch{1.25}
        \begin{tabular}{||c||c||}\hline\hline
            \textbf{Initial Guess}  & Measure Error \\\hline\hline
            $0.1*\text{rand}(N,1)$  & \makecell{$3.2875*10^{-5}$\\ $348.09$ sec.} \\\hline
            $0.01*\text{rand}(N,1)$ & \makecell{$1.6175*10^{-4}$\\ $238$ sec.} \\\hline
            $\mathbf{0}$            & \makecell{$1.7587*10^{-4}$\\ $235.97$ sec.} \\\hline
            \hline
        \end{tabular}
        \caption{Newton solution of Example \eqref{E7}}
        \label{tab:Newton_3d}
    \end{minipage} 
\end{table}

\begin{figure}[ht]
    \centering
    \begin{subfigure}{.19\textwidth}
        \centering
        \includegraphics[width=\linewidth]{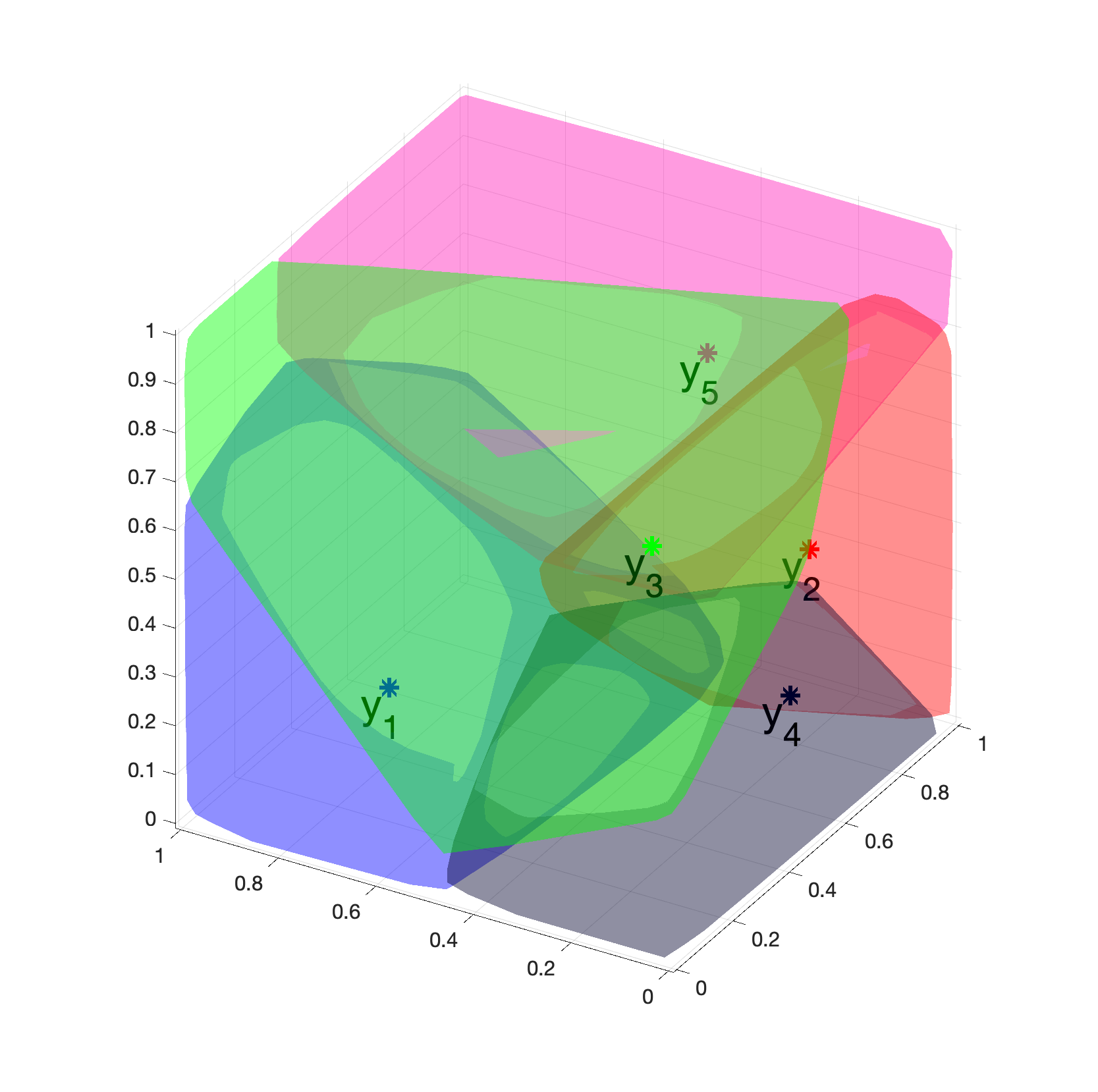}
        \caption{$t=0$}
        \label{fig:3d_5pnts_1}
    \end{subfigure}
    \begin{subfigure}{.19\textwidth}
        \centering
        \includegraphics[width=\linewidth]{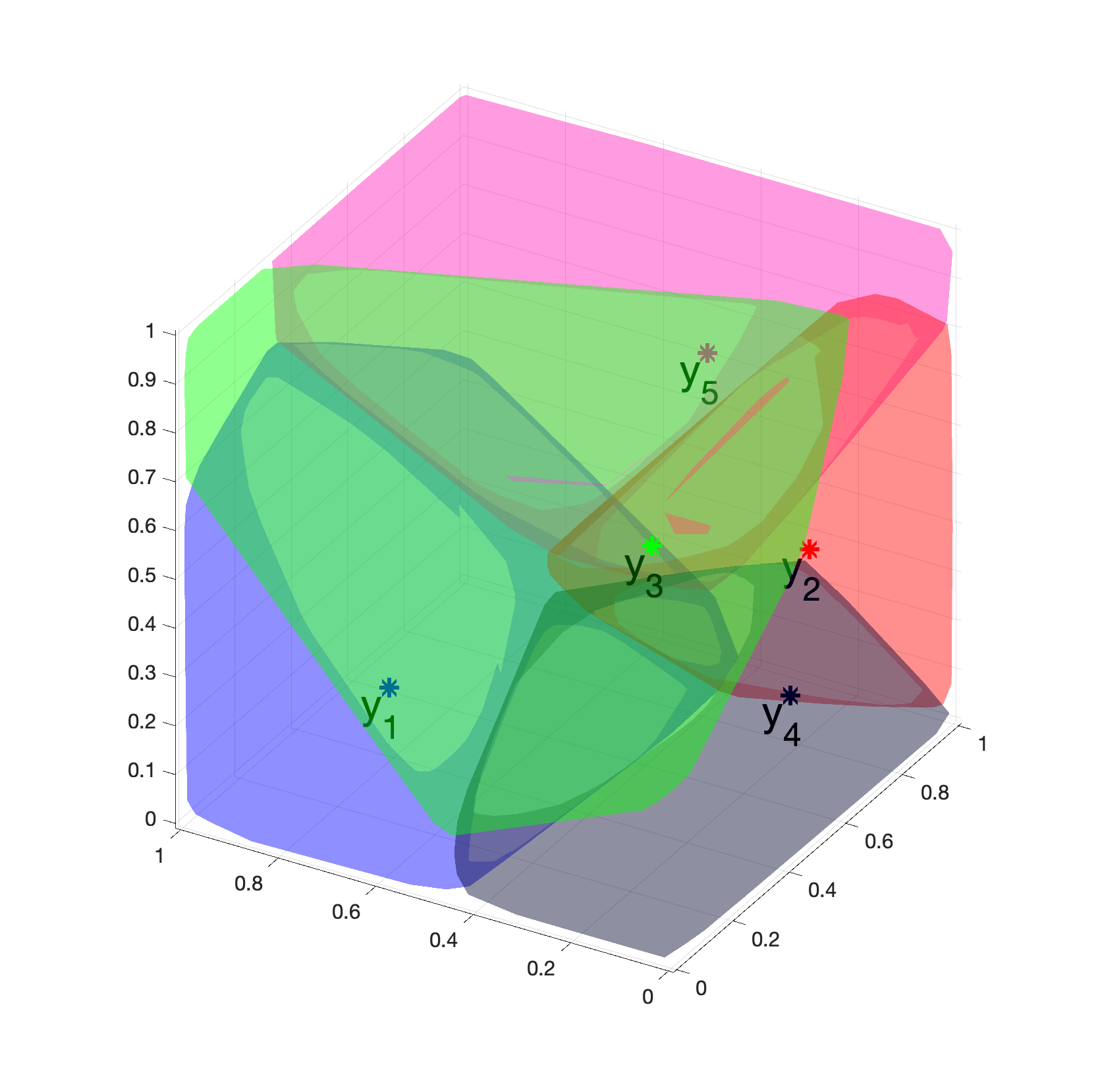}
        \caption{$t=0.25$}
        \label{fig:3d_5pnts_2}
    \end{subfigure}
    \begin{subfigure}{.19\textwidth}
        \centering
        \includegraphics[width=\linewidth]{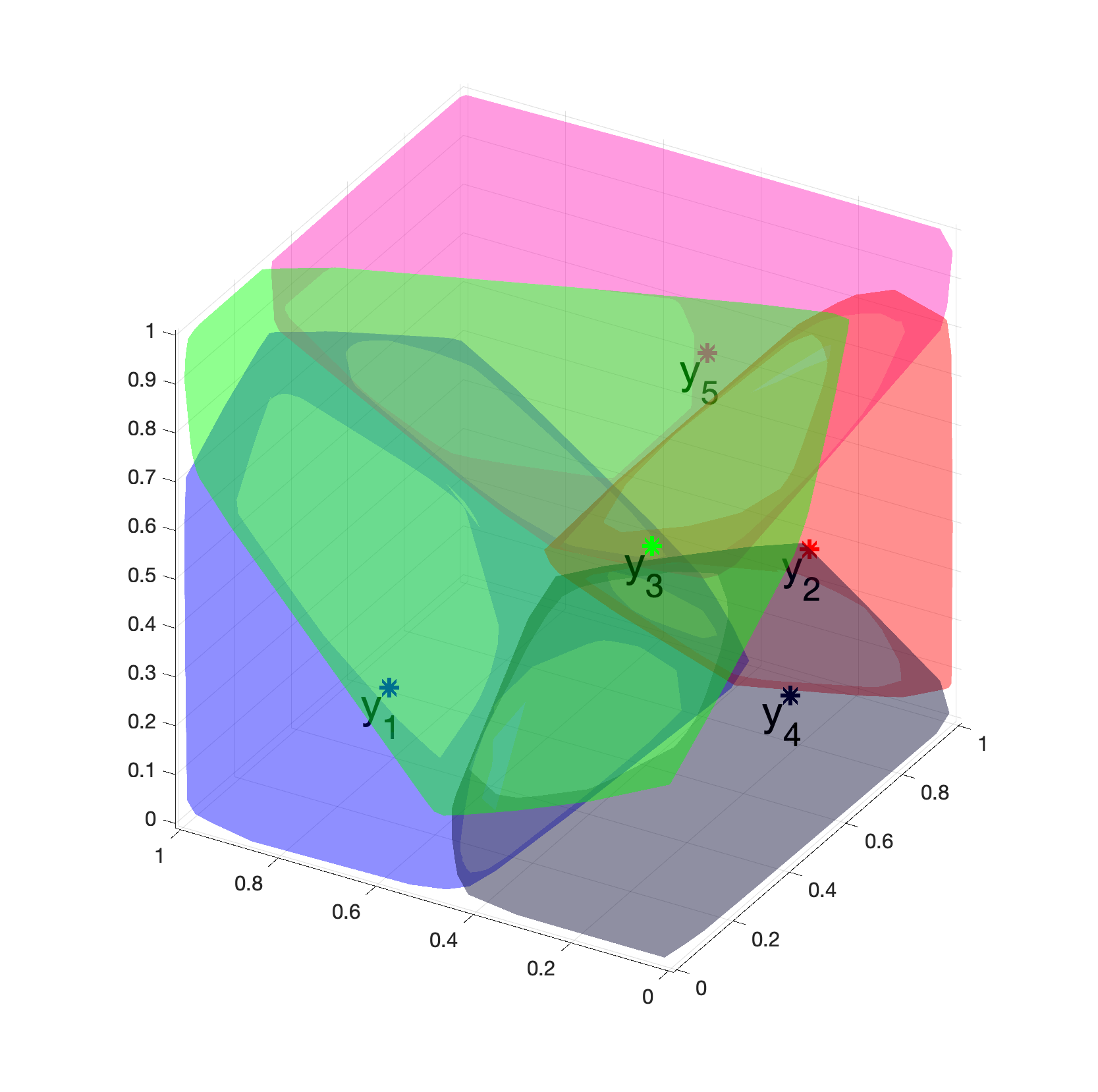}
        \caption{$t=0.5$}
        \label{fig:3d_5pnts_3}
    \end{subfigure}
    \begin{subfigure}{.19\textwidth}
        \centering
        \includegraphics[width=\linewidth]{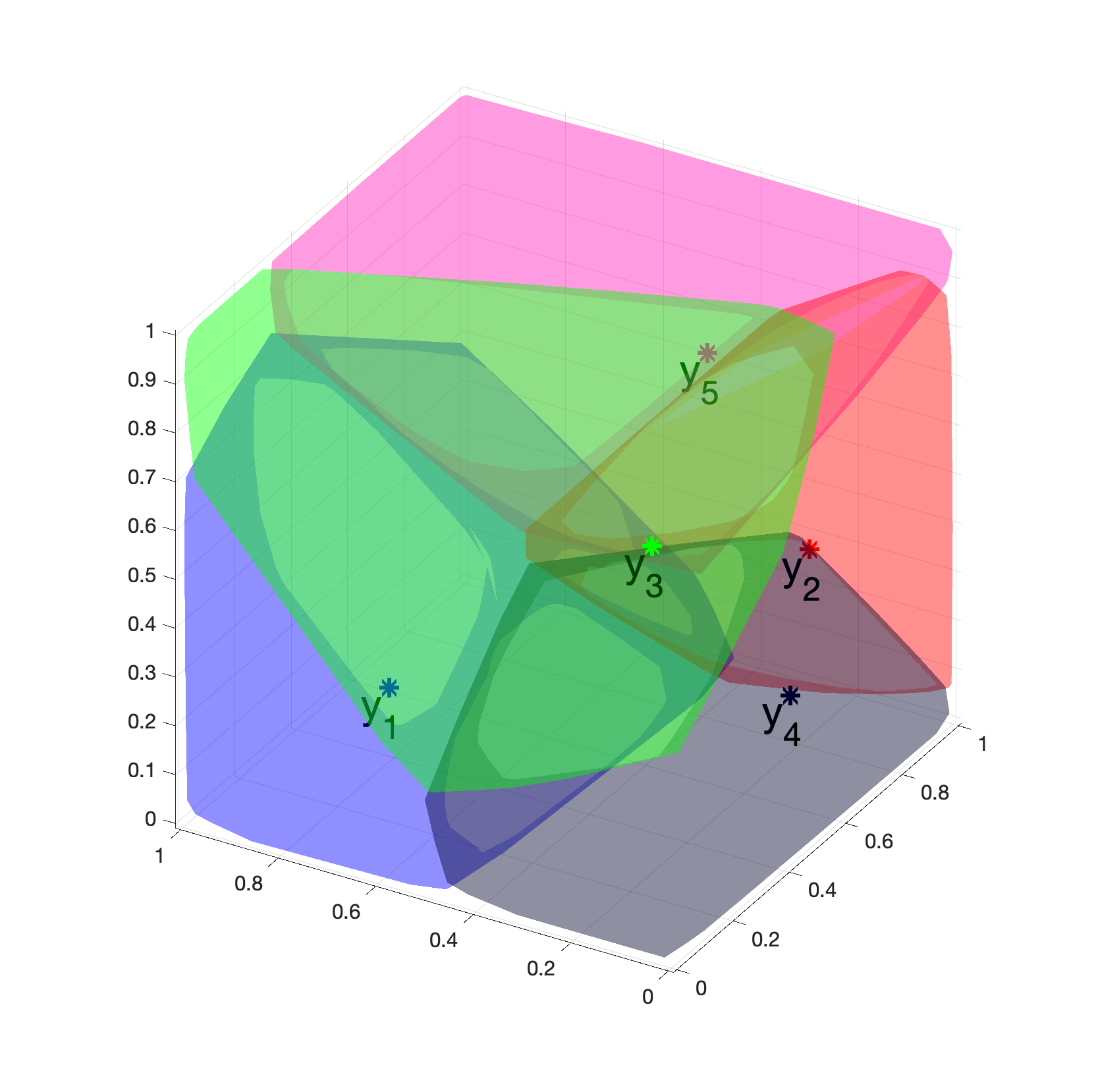}
        \caption{$t=0.75$}
        \label{fig:3d_5pnts_4}
    \end{subfigure}
    \begin{subfigure}{.19\textwidth}
        \centering
        \includegraphics[width=\linewidth]{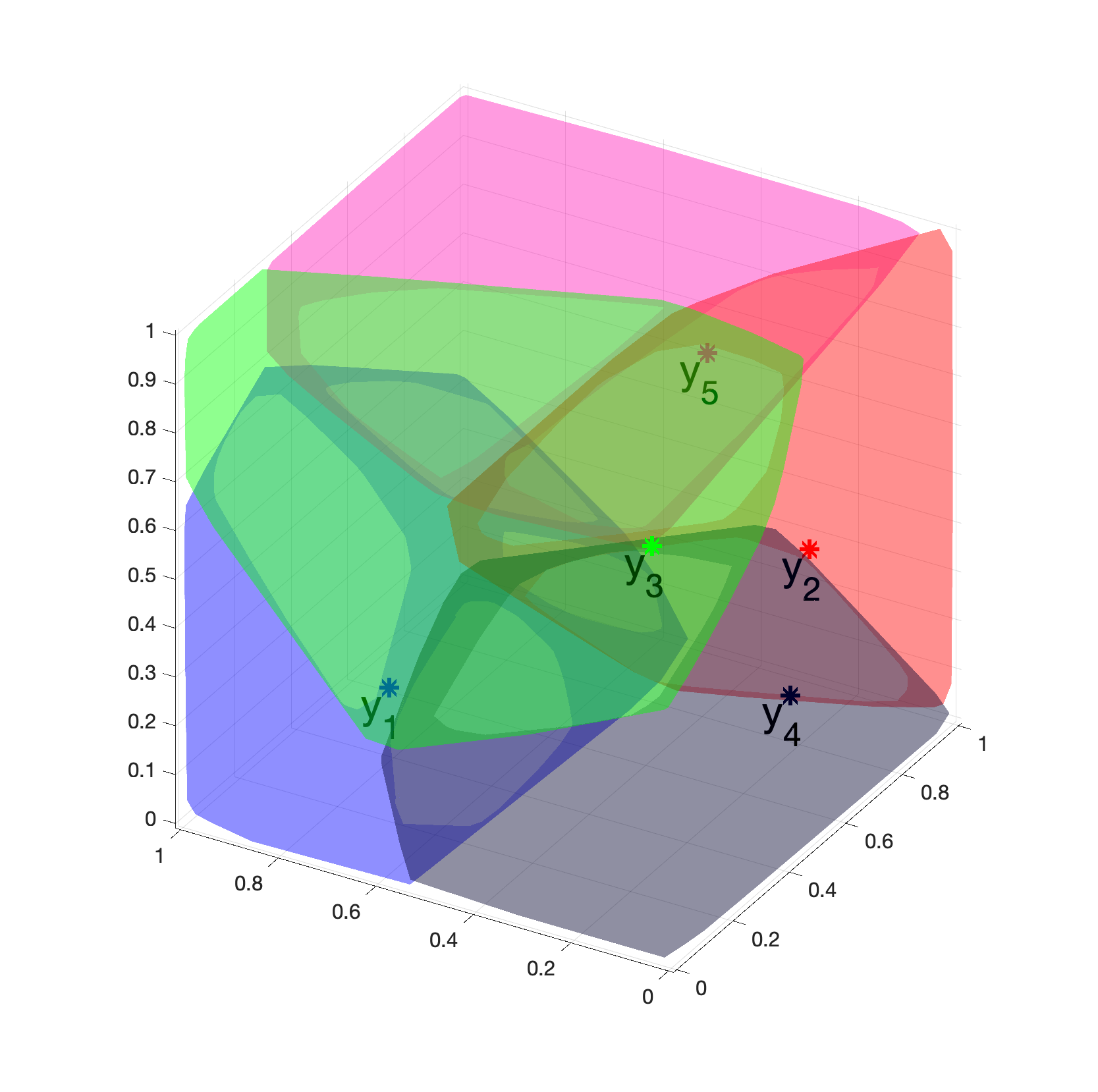}
        \caption{$t=1$}
        \label{fig:3d_5pnts_5}
    \end{subfigure}
    \caption{Time evolution of Laguerre cells in Example \eqref{E7}}
    \label{fig:Lag_3d_E7}
\end{figure}

\begin{figure}[ht]
    \centering
    \includegraphics[width=0.7\linewidth]{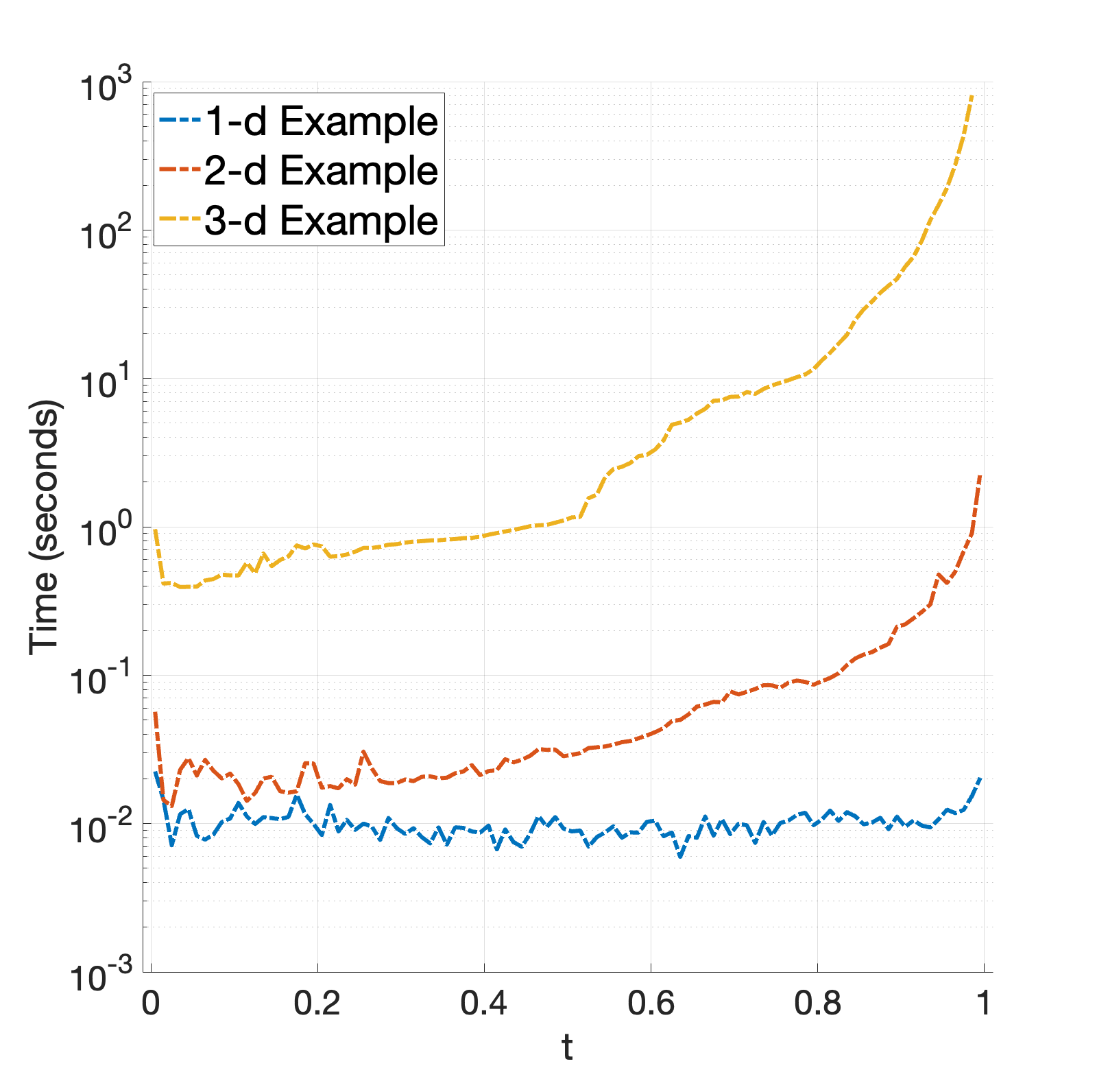}
    \caption{Comparison of time across dimensions and as $t\to 1$}
    \label{fig:Time_Comparison}
\end{figure}

\subsection*{Acknowledgments}
 L.N. benefited from the support of the FMJH Program PGMO and from the ANR project GOTA (ANR-23-CE46-0001).  L.N. thanks T. O. Gallouët, Q. Mérigot and P. Pegon for the fruitful discussions on semi-disrete OT and entropic regularization.
 
 \noindent D.O. gratefully acknowledge that this research was supported in part by the Pacific Institute for the Mathematical Sciences.
 
 \noindent B.P. is pleased to acknowledge the support of Natural Sciences and
Engineering Research Council of Canada Discovery Grant number  04864-2024.
\bibliography{references}{}
\bibliographystyle{plain}

\appendix
\section{Proof of Proposition \ref{prop:derivative}}
\label{appendix}
\begin{proof}
The proof is based on the one of \cite{delalande2022nearly}[Proposition 5.1] adapted to the case of a generic cost function.
First of all let us re-write $\frac{\partial}{\partial t}\nabla_\psi \Phi(\psi,t)$ as follows
\[\frac{\partial}{\partial t}\nabla \Phi(\psi,t)=\int_X\sum_{j=1}^N\bigg(\dfrac{\Delta_{ij}(x,1)}{(1-t)^2}\bigg)\pi_i(x)\pi_j(x)\dd\rho(x)\ , \]
where the quantity
\[\Delta_{i,j}(x,t)=\psi_i -tc(x,y_i)-(\psi_j - tc(x,y_j))\ ,\]
can be interpreted as a duality gap and
\[\pi_i(x)=\dfrac{1}{\sum_{k=1}^N \exp{\bigg(\dfrac{\Delta_{ki}(x,t)}{1-t}}\bigg)}\ .\]
In order to estimate the integral over $X$ we are going to make a similar partition as the one in \cite{delalande2022nearly}: we consider the set
\[X_{i,\eta,+}:=\{x\in\text{Lag}_i(\psi)\;|\;\forall j\neq i,\dfrac{\Delta_{ij}(x,1)}{||p^x_i-p^x_j||}\geq \eta\}\ ,\]
and
\[X_{i,\eta,-}:=\{x\in X\;|\;\forall j\in\text{argmax}_k(\psi_k-c(x,y_k)),\dfrac{\Delta_{ji}(x,1)}{||p^x_j-p^x_i||}\geq \eta\}\ ,\]
where $p^x_i:=\nabla_xc(x,y_i)$. Notice that this two sets corresponds respectively to the points of $\mathrm{Lag}_i(\psi)$ and $X\setminus \mathrm{Lag}_i(\psi)$ which are far from the boundary of the Laguerre cell $i$.
The idea behind the proof is to create then a tube around the common boundary between the Laguerre cells to have better estimates. Moreover, notice that thanks to the twisted assumption the quantity $||p^x_j-p^x_i||$ is never null. We the define for any $j\neq i$ the common boundary between $\mathrm{Lag}_i(\psi)$ and   $\mathrm{Lag}_j(\psi)$ as $H_{ij}=\mathrm{Lag}_i(\psi)\cap \mathrm{Lag}_j(\psi)$ and for a parameter $\gamma>0$ the set of points of $H_{ij}$  that are at least a distance controlled by $\gamma$ from the other Laguerre cells
\[H^\gamma_{ij}:=\{x_0\in H_{ij}\;|\;\forall k\neq i,j,\Delta_{ik}(x_0,1)=\Delta_{jk}(x_0,1)\geq \gamma\max(||p_i^{x_0}-p_k^{x_0}||,||p_j^{x_0}-p_k^{x_0}||)\}\ .\]
Denote then by
\[T_{i,\eta,\gamma}=\cup_{j\neq i}\{x_0+sd^{x_0}_{ij},x_0\in H^\gamma_{ij},s\in[-\eta||p_i^{x_0}-p_j^{x_0}||,\eta ||p_i^{x_0}-p_j^{x_0}||]\}\ ,\]
with $d^{x_0}_{ij}=\frac{p_i^{x_0}-p_j^{x_0}}{||p_i^{x_0}-p_j^{x_0}||^2}$ the union of tubular sets around the common boundary without the corners and $C_{i,\eta,\gamma}=X\setminus (X_{i,\eta,+}\cup X_{i,\eta,-}\cup T_{i,\eta,\gamma})$ the neighbourhood of the corners of the Laguerre cell.
Before going into the details notice that thanks to the assumptions on the cost and the set $X$ we can easily control the term $p_i^{x}-p_j^{x}$ for all $x$ and couple of indexes $i,j$.
It is then quite easily to observe that on both $X_{i,\eta,+}$ and  $X_{i,\eta,-}$ we get the following control on the integrand
\[\sum_{j=1}^N\bigg(\dfrac{\Delta_{ij}(x,1)}{(1-t)^2}\bigg)\pi_i(x)\pi_j(x)\lesssim\dfrac{e^{-\eta/(1-t)}}{(1-t)^2}\ . \]
We turn now our attention on the evaluation of the integral over $T_{i,\eta,\gamma}$. Notice first that for $x\in T_{i,\eta,\gamma}$ there exists and index $j$ and $x_0\in H^\gamma_{ij}$ such that $x=x_0+sd^{x_0}_{ij}$ such that we get, by a Taylor expansion,
\[
\begin{split}
\Delta_{ij}(x,1)&=\Delta_{ij}(x_0,1)+s\langle\nabla_x\Delta(x_0,1),d^{x_0}_{ij}\rangle+\frac{s^2}{2}\langle\nabla_{xx}^2\Delta(\xi,1)d^{x_0}_{ij},d^{x_0}_{ij}\rangle\\
&=s+\frac{s^2}{2}\langle\nabla_{xx}^2\Delta(\xi,1)d^{x_0}_{ij},d^{x_0}_{ij}\rangle\ ,
\end{split}
\]
where the quadratic term will not play an important role since we will take $s$ depending by $1-t$ and for $t\rightarrow 1$ the first order term will be the dominant one. 
For any $k\neq i,j$ we have by definition of $H^\gamma_{ij}$ and, again by a Taylor expansion,
\[\Delta_{ik}(x,t)\geq \gamma||p^{x_0}_i-p^{x_0}_j||-|s|C\geq \tilde\gamma\ ,\]
and in the same way  we have $\Delta_{i,k}(x,t)\geq\tilde{\gamma}$.
Now the integral on $T_{i,\eta,\gamma}$ can be written as follows
\begin{equation}
    \sum_{j}\int_{x_0\in H_{ij}^\gamma}\int_{0}^{\eta||p^{x_0}_i-p^{x_0}_j||}(g_i(x_0-sd^{x_0}_{ij})+g_i(x_0+sd^{x_0}_{ij}))\dd t\dd \mathcal H^{d-1}(x_0)\ ,
\end{equation}
where $g_i(x)=\sum_{j\neq i}\big(\frac{\Delta_{ij}(x,1)}{(1-t)^2}\big)\pi_i(x)\pi_j(x)\rho(x)$.
Denote by  $x_s=x_0+sd^{x_0}_{ij}$ and $x_{-s}=x_0-sd^{x_0}_{ij}$ then
\[
\begin{split}
g_i(x_{s})&=\bigg(\dfrac{\Delta_{ij}(x_{-s},1)}{(1-t)^2})\bigg)\pi_i(x_{s})\pi_j(x_s)\rho(x_s)+\sum_{k\neq i,j}\bigg(\dfrac{\Delta_{ik}(x_{-s},1)}{(1-t)^2})\bigg)\pi_i(x_{s})\pi_k(x_s)\rho(x_s)\\
&\lesssim \dfrac{s}{(1-t)^2}\pi_i(x_{s})\pi_j(x_s)\rho(x_s)+\dfrac{s^2}{2(1-t)^2}\pi_i(x_{s})\pi_j(x_s)\rho(x_s)+\dfrac{1}{(1-t)^2}e^{-\tilde\gamma/(1-t)}\ ,
\end{split}
\]
where we have used the fact that $\pi_k(x_s)\leq e^{-\tilde\gamma/(1-t)}$ to get an upper bound on the second term of $g$.
In the same way we obtain
\[g(x_{-s})\lesssim \dfrac{-s}{(1-t)^2}\pi_i(x_{-s})\pi_j(x_{-s})\rho(x_{-s})+\dfrac{s^2}{2(1-t)^2}\pi_i(x_{-s})\pi_j(x_{-s})\rho(x_{-s})+\dfrac{1}{(1-t)^2}e^{-\tilde\gamma/(1-t)}\ , \]
thus we get
\begin{equation}
    \begin{split}
        |g_i(x_{s})+g_i(x_{-s})|&\lesssim \dfrac{s}{(1-t)^2}|\pi_i(x_{s})\pi_j(x_s)\rho(x_s)-\pi_i(x_{-s})\pi_j(x_{-s})\rho(x_{-s})|\\
        &+\dfrac{s^2}{2(1-t)^2}|\pi_i(x_{s})\pi_j(x_s)\rho(x_s)+\pi_i(x_{-s})\pi_j(x_{-s})\rho(x_{-s})|\\
        &+\dfrac{1}{(1-t)^2}e^{-\tilde\gamma/(1-t)}\\
        &\lesssim \dfrac{s}{(1-t)^2}|\pi_i(x_{s})\pi_j(x_s)-\pi_i(x_{-s})\pi_j(x_{-s})|\rho(x_s)\\
        &+\dfrac{s}{(1-t)^2}\pi_i(x_{-s})\pi_j(x_{-s})|\rho(x_s)-\rho(x_{-s})|\\
        &+\dfrac{s^2}{2(1-t)^2}|\pi_i(x_{s})\pi_j(x_s)\rho(x_s)+\pi_i(x_{-s})\pi_j(x_{-s})\rho(x_{-s})|\\
        &+\dfrac{1}{(1-t)^2}e^{-\tilde\gamma/(1-t)}\ .
    \end{split}
\end{equation}
First by Hölder continuity of $\rho$ we obtain
\[|\rho(x_s)-\rho(x_{-s})|\leq C||x_{s}-x_{-s}||^\alpha\lesssim s^{\alpha}\ .\]
Consider now the term $\pi_i(x_s)$ and re-write it as
\[\pi_i(x_s)=\dfrac{1}{1+\exp{\bigg(\dfrac{\Delta_{ji}(x,t)}{1-t}\bigg)}+\sum_{k\neq i,j}^N \exp{\bigg(\dfrac{\Delta_{ki}(x,t)}{1-t}}\bigg)}\ ,\]
now for the second term at the denominator we can use the Taylor expansion where as for the third we have the bound $\lesssim e^{-\tilde\gamma/(1-t)}$ so that we get
\[|\pi_i(x_{s})\pi_j(x_s)-\pi_i(x_{-s})\pi_j(x_{-s})|\lesssim e^{-\tilde\gamma/(1-t)}e^{t(s+s^2)/(1-t)}\ . \]
Injecting now all these bounds into the integral on $T_{i,\eta,\gamma}$, and considering that $\eta$ is going to be very small,  yields 
\[
\begin{split}
&\bigg\lvert\int_{T_{i,\eta,\gamma}}\sum_{j\neq i}\dfrac{\Delta_{ij}(x,t)}{(1-t)^2}\pi_i(x)\pi_j(x)\dd\rho(x)\bigg\rvert\lesssim\int_{0}^{\eta\kappa}\dfrac{1}{(1-t)^2}(se^{-\tilde\gamma/(1-t)}e^{ts/(1-t)}+s^2+s^{1+\alpha}+e^{\tilde\gamma/(1-t)})\dd t\\
&\lesssim \dfrac{\eta^{2+\alpha}}{(1-t)^2}+\dfrac{\eta^3}{(1-t)^2}+\dfrac{e^{-\tilde\gamma/(1-t)}}{(1-t)^2}(\eta+(1-t)\eta e^{\eta/(1-t)}-(1-t)^2(e^{\eta/(1-t)}-1))\ ,
\end{split}
\]
where $\kappa=\max_{i\neq j}\max_{x\in X}||p^x_i-p^x_j||$.
The control on $C_{i,\eta,\gamma}$ is exactly obtained as in \cite{delalande2022nearly} and we get that
\[\bigg\lvert\int_{C_{i,\eta,\gamma}}\sum_{j\neq i}\dfrac{\Delta_{ij}(x,t)}{(1-t)^2}\pi_i(x)\pi_j(x)\dd\rho(x)\bigg\rvert\lesssim\dfrac{\gamma^2}{(1-t)^2}(\eta+e^{-\eta/(1-t)})\ .
\]
Finally
\begin{equation*}
\begin{split}
   & \bigg|\int_X\sum_{j=1}^N\bigg(\dfrac{\Delta_{ij}(x,1)}{(1-t)^2}\bigg)\pi_i(x)\pi_j(x)\dd\rho(x)\bigg|\lesssim \bigg|\int_{X_{i,\eta,-}}\sum_{j=1}^N\bigg(\dfrac{\Delta_{ij}(x,1)}{(1-t)^2}\bigg)\pi_i(x)\pi_j(x)\dd\rho(x)\bigg|+\\
   &\bigg|\int_{X_{i,\eta,+}}\sum_{j=1}^N\bigg(\dfrac{\Delta_{ij}(x,1)}{(1-t)^2}\bigg)\pi_i(x)\pi_j(x)\dd\rho(x)\bigg|+\bigg|\int_{T_{i,\eta,\gamma}}\sum_{j=1}^N\bigg(\dfrac{\Delta_{ij}(x,1)}{(1-t)^2}\bigg)\pi_i(x)\pi_j(x)\dd\rho(x)\bigg|+\\
   &\bigg|\int_{C_{i,\eta,\gamma}}\sum_{j=1}^N\bigg(\dfrac{\Delta_{ij}(x,1)}{(1-t)^2}\bigg)\pi_i(x)\pi_j(x)\dd\rho(x)\bigg|\lesssim \dfrac{e^{-\eta/(1-t)}}{(1-t)^2}+\dfrac{\gamma^2}{(1-t)^2}(\eta+e^{-\eta/(1-t)})\\
   &+\dfrac{\eta^{2+\alpha}}{(1-t)^2}+\dfrac{\eta^3}{(1-t)^2}+\dfrac{e^{-\tilde\gamma/(1-t)}}{(1-t)^2}(\eta+(1-t)\eta e^{\eta/(1-t)}-(1-t)^2(e^{\eta/(1-t)}-1))\ ,
   \end{split}
\end{equation*}
and  by correctly choosing $\gamma$ and  $\eta$ we get the result.
\end{proof}

\end{document}